\newtheorem{theorem}{Theorem}[section]
\newtheorem{lemma}[theorem]{Lemma}
\newtheorem{trditev}[theorem]{Proposition}
\newtheorem{posledica}[theorem]{Corollary}
\theoremstyle{definition}
\theoremstyle{remark}
\newtheorem{remark}[theorem]{Remark}
\numberwithin{equation}{section}
\def\max{\mathop{\rm max}\nolimits}
\def\grad{\mathop{\rm grad}\nolimits}
\def\dim{\mathop{\rm dim}\nolimits}
\def\supp{\mathop{\rm supp}\nolimits}
\def\Hom{\mathop{\rm Hom}\nolimits}
\def\rmax{\mathop{\rm rmax}\nolimits}
\newcommand{\C}{\mathbb{C}}
\newcommand\dibar{\overline{\partial}}
\begin{document}
\title[Stein neighborhood basis]
{Stein neighborhood bases of embedded strongly pseudoconvex domains and approximation of mappings}
\author{Tadej Star\v{c}i\v{c}}
\address{Institute of Mathematics, Physics and Mechanics, University of Ljubljana, Jadranska 19, 1000 Ljubljana, Slovenia}
\email{tadej.starcic@fmf.uni-lj.si}
\subjclass[2000]{32C25, 32E30, 32H02, 32L05, 32Q28, 32T15}
\date{April 25, 2008}


\keywords{Stein spaces, strongly pseudoconvex domains, fiber bundles, holomorphic mappings, approximation\\
\indent Research supported by grants ARRS (3311-03-831049), Republic of Slovenia.}

\begin{abstract}
In this paper we construct a Stein neighborhood basis for any compact subvariety $A$ with strongly pseudoconvex boundary $bA$ 
and Stein interior $A\backslash bA$ in a complex space $X$. This is an extension of a well known theorem of Siu. When $A$ is a complex curve, our result coincides
with the result proved by Drinovec-Drnov\v{s}ek and Forstneri\v{c}.
We shall adapt their proof to the higher dimensional case,
using also some ideas of Demailly's proof of Siu's theorem.
For embedded strongly pseudoconvex domain 
in a complex manifold we also find a basis of 
tubular Stein neighborhoods.
These results are applied to the approximation problem
for holomorphic mappings.
\end{abstract}
\maketitle

\section{Introduction}

One of the most important classes of complex manifolds is the class
of Stein manifolds, introduced in 1951 by K.\ Stein in his work
on the second Cousin problem (see \cite{Stein}).
Stein manifolds can be characterized as the closed complex submanifolds of 
complex Euclidean spaces $\mathbb{C}^N$ (see Remmert \cite{Remmert1}), 
by the existence of strictly plurisubharmonic exhaustion functions 
(see Grauert \cite{lit9}), and also by the vanishing of the cohomology groups  
with coefficients in coherent analytic sheaves (Cartan's Theorems A and B).
For the general theory of Stein manifolds and Stein spaces (with singularities)
see the monographs \cite{Grauert-Remmert,lit21,Ho}.

Since many classical problems are solvable on Stein spaces,
it is an enormously useful property for a subset of a complex space 
to have an open Stein neighborhood, or even a basis of such neighborhoods. 
A seminal result in this direction is the theorem of Y.-T.\ Siu 
to the effect that every Stein subvariety  
in a complex space admits an open Stein neighborhood (see \cite{lit3};
for different proofs and extensions see also \cite{Col, lit1,lit5}). 
In this paper we extend Siu's theorem to compact subvarieties with strongly pseudoconvex boundaries and Stein interior 
in complex spaces (see Theorem \ref{izrekg}). 
For embedded strongly pseudoconvex domains  
in complex manifold we also find bases of 
tubular Stein neighborhoods (see Theorem \ref{izrek2}).
These results are applied to the approximation problem
for holomorphic mappings (see Corollaries \ref{corollary1}
and \ref{CR-approx}).

All complex spaces in this paper will be assumed reduced and paracompact.
Let $l\in\{2,3,\ldots,\infty\}$.
A compact complex subvariety $A$ of a complex space $X$ has 
{\em embedded $\mathcal{C}^l$-boundary}, $bA$, if 
every point $p\in bA$ admits an open neighborhood 
$U\subset X$ and a holomorphic embedding 
$\Phi\colon U\hookrightarrow U'\subset\C^{N}$ onto a closed complex 
subvariety $\Phi(U)$ in a domain $U'\subset\C^{N}$
such that $\Phi(A\cap U)$ is a complex submanifold 
of $U'$ with $\mathcal{C}^l$-boundary $\Phi(bA\cap U)$. 
The interior $A\backslash bA$ is a closed complex subvariety
of $X\backslash bA$ that has no singularities near
$bA$. The boundary $bA$ is {\em strongly pseudoconvex} 
at a point $p\in bA$ if we can choose a neighborhood $U\ni p$ 
as above such that $\Phi(A\cap U)$ projects biholomorphically 
onto a piece of a strongly pseudoconvex domain in a 
suitable lower dimensional complex subspace of $\mathbb{C}^N$.

A compact set $K$ in a complex space $X$ is said to be
{\em $\mathcal{O}(X)$-convex}, if for every point $p\in X\backslash K$
there exists a holomorphic function $f$ on $X$ satisfying
$|f(p)|>\sup_{x\in K}|f(x)|$.

The following theorem is our first main result; we prove it in section \ref{Sb}.

\begin{theorem}
\label{izrekg}
Let $A$ be a compact subvariety with embedded strongly pseudoconvex 
$\mathcal{C}^2$-boundary $bA$ in a complex space $X$ 
such that the interior $A\backslash bA$ is a Stein space.
Assume that $K\subset \Omega$ is a compact $\mathcal{O}(\Omega)$-convex set 
in an open Stein set $\Omega\subset X$ such that 
$A\cap K$ is $\mathcal{O}(A)$-convex and $K\cap bA=\emptyset$. 
Then $A\cup K$ has a fundamental basis of open Stein neighborhoods in $X$.
\end{theorem}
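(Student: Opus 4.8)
The plan is to reduce the theorem to the construction of a single continuous plurisubharmonic function and then invoke the fact that a relatively compact strongly pseudoconvex domain in a complex space is holomorphically convex and is Stein as soon as it carries no positive-dimensional compact analytic subset. Concretely, I aim to produce a neighborhood $W$ of $A\cup K$ in $X$ together with a continuous $\tau\colon W\to[0,\infty)$ that is plurisubharmonic on $W$, strictly plurisubharmonic on $W\setminus(A\cup K)$, and satisfies $\tau^{-1}(0)=A\cup K$. Granting such a $\tau$, for almost every small $c>0$ the sublevel set $W_c=\{\tau<c\}$ is relatively compact with strongly pseudoconvex boundary $\{\tau=c\}$ (strict plurisubharmonicity of $\tau$ together with Sard's theorem applied to the gradient), and the $W_c$ exhaust $A\cup K$ as $c\to 0$. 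By Grauert's theorem each $W_c$ is holomorphically convex; since $\tau$ is strictly plurisubharmonic off $A\cup K$, any positive-dimensional compact analytic subset of $W_c$ must lie inside $A\cup K$. As $A\setminus bA$ and $\Omega$ are Stein and $bA$ supports no boundaryless analytic subset, no such subset exists, so each $W_c$ is Stein and $\{W_c\}$ is the desired fundamental system.

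The construction of $\tau$ proceeds by assembling three local models and gluing them. Near the strongly pseudoconvex boundary I use the embedded structure: around each $p\in bA$ the chart $\Phi\colon U\hookrightarrow\C^{N}$ presents $A\cap U$ as a biholomorphic copy of a piece of a strongly pseudoconvex domain $\{r<0\}$ with strictly plurisubharmonic defining function $r$; extending $r$ off the submanifold by adding the squared ambient distance to $\Phi(A\cap U)$ yields a strictly plurisubharmonic function on a full neighborhood in $X$ that is negative in the interior of $A$, vanishes on $bA$, and is positive outside $A$. Over the Stein interior $A\setminus bA$ I use Demailly's squared-distance technique: in local embeddings the functions $\delta_j=\mathrm{dist}(\,\cdot\,,\Phi_j(A))^{2}$ vanish to second order on $A$ and are plurisubharmonic up to an error controlled by $-C\delta_j^{1/2}$ times the metric; patching them with a partition of unity and correcting by a small multiple of an extension of a strictly plurisubharmonic exhaustion of $A\setminus bA$, incorporated so as not to disturb the vanishing locus, produces a nonnegative plurisubharmonic function that vanishes exactly on $A$ and is strictly plurisubharmonic in the transverse directions. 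Finally, near $K$ the $\mathcal{O}(\Omega)$-convexity furnishes a nonnegative plurisubharmonic $\chi$ on $\Omega$ with $\chi^{-1}(0)=K$ that is strictly plurisubharmonic off $K$.

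To merge these into one $\tau$ I glue by the regularized maximum, respectively by convex combinations subordinate to a partition of unity, in the overlap regions. The compatibility hypotheses enter precisely here: $K\cap bA=\emptyset$ guarantees that the boundary model and the $K$-model have disjoint relevant supports, so no conflict arises along $bA$; and the $\mathcal{O}(A)$-convexity of $A\cap K$ is what allows the interior-of-$A$ model and the $K$-model to be combined so that the joint zero set is exactly $A\cup K$, with the combined function remaining plurisubharmonic where $A$ and $K$ meet.

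The main obstacle I expect is the interior gluing in the presence of singularities and only $\mathcal{C}^2$ boundary regularity: keeping the squared-distance construction plurisubharmonic after patching across different local embeddings, while simultaneously forcing the zero set to be exactly $A$ rather than a thickening, and matching strict plurisubharmonicity across the seam between the Stein-interior model and the strongly pseudoconvex boundary model, is the delicate point. This is exactly where Demailly's estimates on the complex Hessian of the squared distance, combined with the Drinovec-Drnov\v{s}ek--Forstneri\v{c} gluing near a strongly pseudoconvex boundary, must be pushed through; verifying the curvature and error bounds and the behavior near $A\cap K$ under the $\mathcal{O}(A)$-convexity assumption is the technical heart of the argument.
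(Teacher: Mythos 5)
Your overall skeleton---produce a plurisubharmonic function near $A\cup K$, take sublevel sets, and conclude Steinness from the Levi problem---is the same general strategy as the paper's, and your maximum-principle argument excluding compact positive-dimensional subvarieties is sound. But the object you make everything rest on, a nonnegative $\tau$ with $\tau^{-1}(0)=A\cup K$ that is plurisubharmonic on $W$ and \emph{strictly} plurisubharmonic on all of $W\setminus(A\cup K)$, is never constructed in the paper, is strictly stronger than what is needed, and your proposed gluing cannot produce it. The first failure is the regularized maximum: by its definition one has $\rmax_\eta(0,\ldots,0)>0$ (since $\int s\,\theta(s)\,ds=0$ while $\max(s_1,\ldots,s_p)>s_1$ on a set of positive $\theta$-measure), so wherever two of your local models vanish simultaneously---which happens along every chart overlap on $A$, and on all of $bA$, where your boundary model and your interior model both vanish---the glued function is strictly positive. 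Thus $\tau^{-1}(0)$ loses a collar of $bA$ and all overlaps along $A$. The paper is built to avoid exactly this: in Lemma \ref{lemaspmeja} the $\rmax$ is used only in the regime of Lemma \ref{lemastrogopsh}~(\ref{lemmaiii}), where one function strictly dominates, and in Lemma \ref{dodatek} the local pieces are \emph{summed} after extending each by zero across a region where it vanishes identically---at the price that the zero set is the thickened set $\Lambda$, not $A\cup K$. Second, max-type or sum-type operations on nonnegative functions yield \emph{intersections} of zero sets, whereas you need the union $A\cup K$; no mechanism is offered for this (and ``$\mathcal{O}(A)$-convexity of $A\cap K$'' is not one: in the paper that hypothesis enters in Lemma \ref{lemaspmeja} to build a strictly plurisubharmonic $\rho$ with $\rho<0$ on $K$). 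The paper handles the union additively, with sign conditions rather than zero-set surgery: $\rho<0$ on $K$, while $\varepsilon v_\delta\to-\infty$ on $A$ by Proposition \ref{trditev2}.

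The third failure concerns strictness off $A\cup K$. Any nonnegative $\mathcal{C}^2$ function vanishing on $A$ has vanishing Levi form at regular points of $A$ in directions tangent to $A$, so the tangential positivity available to $\tau$ just off $A$ tends to $0$ as one approaches $A$---and this is precisely the size of the errors ($\partial\vartheta_j\wedge\bar{\partial}u_j$ terms, and the non-holomorphicity of the $g_j$ beyond $\Gamma_j$) produced by partition-of-unity patching, which in any case destroys plurisubharmonicity. Demailly's trick blows up the \emph{transverse} Hessian but creates no tangential positivity; in Proposition \ref{trditev} it succeeds only because it is applied to a function that restricts to the strictly plurisubharmonic exhaustion $\rho_1$ on $A$, i.e.\ one with genuine tangential Hessian, which your $\tau$ is forbidden to have. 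This is why the paper never needs your $\tau$: it takes $\rho_{\varepsilon,\delta}+\widetilde{\psi}=\rho+\varepsilon v_\delta+C\psi$, requires strict plurisubharmonicity only on a neighborhood of the closure of the zero sublevel set $\omega$ (with the uncontrolled region outside $\Lambda$ pushed away by $C\psi$), and concludes that $\omega$ is Stein from Narasimhan's theorem on strictly plurisubharmonic bounded exhaustions. That route also removes your remaining loose ends: no Sard-type argument (your $\tau$ is only continuous), no discussion of compact analytic subsets, and no appeal to Grauert's theorem, which in a complex space $X$ would anyway have to be replaced by Narasimhan's solution of the Levi problem.
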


\begin{figure}[ht]

\pspicture(-2.5,0.5)(8.5,4.5)
\psellipse[fillstyle=solid,fillcolor=gray](3,2.5)(1,0.5)
\psellipse[linecolor=gray,hatchcolor=lightgray,fillstyle=hlines](3,2.5)(1.5,1.8)
\psline[linewidth=1pt,linearc=.55]{*-*}(5.5,2.5)(5,3)(4.5,2.8)(4,2.6)(3.5,2.4)(3,2.5)(2.5,2.7)(2,2.5)(1.5,2.4)(1,2.3)(0.5,2.5)
\psline[linewidth=1pt,linearc=.55]{-}(5.5,2.7)(5,3.2)(4.5,3)(4,2.8)(3.5,3.1)(3,3.2)(2.5,3.1)(2,2.9)(1.5,2.6)(1,2.7)(0.5,2.7)
\psline[linewidth=1pt,linearc=.55]{-}(0.5,2.3)(1,2.2)(1.5,2.1)(2,2)(2.5,1.8)(3,1.8)(3.5,1.9)(4,2.2)(4.5,2.6)(5,2.6)(5.3,2.3)(5.5,2.3)
\psline[linewidth=1pt,linearc=.15]{-}(0.5,2.7)(0,2.5)(0.5,2.3)
\psline[linewidth=1pt,linearc=.15]{-}(5.5,2.7)(6,2.5)(5.5,2.3)
\rput(3,3.5){$\Omega$}
\rput(3.3,2.7){$K$}
\rput(1,3.3){\rnode{a}{$A$}}
\rput(1,2.25){\rnode{b}{ }}
\ncline[nodesep=3pt, linewidth=0.3pt]{->}{a}{b}
\rput(0.7,1.6){\rnode{e}{$bA$}}
\rput(5.9,3.2){\rnode{f}{$bA$}}
\rput(0.5,2.55){\rnode{g}{}}
\rput(5.5,2.4){\rnode{h}{}}
\ncline[nodesep=3pt, linewidth=0.3pt]{->}{e}{g}
\ncline[nodesep=3pt, linewidth=0.3pt]{->}{f}{h}
\rput(5,3){\rnode{c}{}}
\rput(5,2.0){\rnode{d}{$A$}}
\ncline[nodesep=3pt, linewidth=0.3pt]{<-}{c}{d}
\endpspicture
\caption{Theorem \ref{izrekg}}
\label{slika1}
\end{figure}

Note that the subvariety $A$ in Theorem \ref{izrekg} (see Figure \ref{slika1}) has at most finitely many
singularities and at most finitely many irreducible components,
possibly of different dimensions.
When $A$ is a complex curve ($\dim A=1$), Theorem \ref{izrekg} coincides
with Theorem 2.1 in the paper \cite{lit2} by Drinovec-Drnov\v{s}ek and Forstneri\v{c}.
We shall adapt the proof given there to the higher dimensional case,
using also some ideas of Demailly's proof (see \cite{lit1}) of Siu's theorem. 
For Stein neighborhoods of complex curves in $\C^n$ see also 
Wermer \cite{lit7} and Stolzenberg \cite{Stolz}.

It would be interesting to generalize Theorem \ref{izrekg} 
to the case when $A$ has smooth {\em weakly pseudoconvex}
boundary $bA$. In this case the problem of the existence of a Stein
neighborhood basis  is quite delicate
already when $A$ is the closure of a smoothly bounded 
weakly pseudoconvex domain in a Stein manifold.
The famous {\em worm domain} of Diederich and Forn\ae ss
(see \cite{DF77a}) shows that the answer may be negative in general
and additional hypotheses are needed. For results in this direction
see the papers by Diederich and Forn\ae ss \cite{DF77b}, 
Bedford and Forn\ae ss \cite{BF78}, Sibony \cite{Sib87a,Sib91},
Stens\o nes \cite{Ste87}, Forstneri\v c and Laurent-Thi\'ebaut \cite{FLaurent}
and others. Virtually nothing seems known about the existence of
Stein neighbohoods of embedded domains of this type in higher dimensional
complex manifolds.

When solving complex analytic problems on, or near a submanifold of 
a complex manifold, it is often useful to have 
{\em tubular} Stein neighborhoods; these are open Stein neighborhoods 
that are biholomorphically equivalent to neighborhoods of the zero section in a 
holomorphic vector bundle (for the precise definition see section \ref{TS}). 
We prove that a relatively compact strongly pseudoconvex 
domain that is holomorphically embedded with $\mathcal{C}^2$-boundary 
in an arbitrary complex manifold admits a basis of such neighborhoods. 
To be precise, let $D$ denote a relatively compact strongly pseudoconvex domain 
with  $\mathcal{C}^2$-boundary in a Stein manifold $S$; this means that there 
exists a $\mathcal{C}^2$ strictly plurisubharmonic function $\rho \colon U \to \mathbb{R}$ 
in an open set $U\subset S$ containing $\overline D$ such that 
$D=\{z \in U \mid \rho(z)<0\}$ and $d\rho(z)\neq 0$ for every $z \in bD=\{\rho =0\}$.

The following result is proved in section \ref{TS}
(see Theorem \ref{izrek3}).

\begin{theorem}\label{izrek2}
Let $D$ be a relatively compact strongly pseudoconvex domain with $\mathcal{C}^2$-boundary
in a Stein manifold $S$, and let $X$ be a complex manifold. 
Assume that $f\colon\overline{D}\hookrightarrow X$ is a $\mathcal{C}^2$-embedding that is holomorphic in $D$. 
Then there exists a holomorphic vector bundle $\pi \colon \theta \to U_{\overline{D}}$ 
over an open neighborhood $U_{\overline{D}}$ of $\overline{D}$ in $S$ 
such that $A=f(\overline{D})$ has a basis of open Stein neighborhoods in $X$ 
that are biholomorphic to neighborhoods (in $\theta$) of the zero section 
of the restricted bundle $\theta|_{\overline{D}}$. 
\end{theorem}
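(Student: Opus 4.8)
The plan is to produce the bundle $\theta$ together with a genuine biholomorphism from a neighborhood of its zero section onto a Stein neighborhood of $A$, by first building an ``almost holomorphic'' tubular map in the smooth category and then correcting it to a holomorphic one with a $\dibar$-argument; the Stein neighborhood basis itself will be anchored by Theorem \ref{izrekg}. First I would fix a Stein neighborhood $U_{\overline D}$ of $\overline D$ in $S$ (a sublevel set of a strictly plurisubharmonic exhaustion) and extend $f$ to a $\mathcal{C}^2$ map $\tilde f\colon U_{\overline D}\to X$ that is holomorphic on $D$ and whose $\dibar$ vanishes to first order along $\overline D$; such an almost-holomorphic extension exists because $f$ is $\mathcal{C}^2$ up to $bD$ and holomorphic inside. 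Differentiating along $D$ yields a holomorphic inclusion $df\colon TD\hookrightarrow f^{*}TX$ with quotient the normal bundle $N$, which over $\overline D$ is a $\mathcal{C}^1$ complex vector bundle of rank $m-n$, where $m=\dim X$ and $n=\dim S$. Since $U_{\overline D}$ is Stein, Grauert's Oka principle lets me realize the topological bundle underlying $N$ by a holomorphic vector bundle $\pi\colon\theta\to U_{\overline D}$ of rank $m-n$, together with a $\mathcal{C}^1$ isomorphism $\theta|_{\overline D}\cong N$ that is holomorphic over $D$.

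Next I would assemble a $\mathcal{C}^1$ tubular map. Choosing a $\mathcal{C}^1$ splitting $f^{*}TX\cong df(TD)\oplus N$ (holomorphic over $D$) and a $\mathcal{C}^1$-family of holomorphic coordinate charts on $X$ along $A$, and composing with the isomorphism $\theta|_{\overline D}\cong N$, I obtain a $\mathcal{C}^1$ map $F$ from a neighborhood $W$ of the zero section of $\theta|_{\overline D}$ into $X$ with $F|_{\overline D}=f$ whose differential along the zero section induces the identity on normal directions. By the tubular neighborhood theorem $F$ is, after shrinking $W$, a $\mathcal{C}^1$ diffeomorphism onto a neighborhood of $A$; by construction $F$ is holomorphic over the interior $D$ and $\dibar F$ vanishes along $\overline D$ and the zero section.

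The decisive step, and the main obstacle, is to correct $F$ to a biholomorphism. I would take the model neighborhoods to be disc bundles $W_{\delta}=\{v\in\theta \mid \pi(v)\in V,\ \abs{v}_{h}<\delta\}$ for a Hermitian metric $h$ whose associated function is plurisubharmonic and for Stein $V\supset\overline D$, so that each $W_{\delta}$ is Stein and these form a basis of neighborhoods of the zero section as $\delta$ and $V$ shrink down to $\overline D$. On such Stein domains $\dibar$ is solvable, and, crucially, the strong pseudoconvexity of $bD$ supplies $\dibar$-estimates up to the boundary (Henkin--Ram\'irez kernels, or weighted $L^{2}$/H\"ormander estimates) with norms controlled by $\dibar F$; since $\dibar F$ vanishes to high order on $\overline D$, the solution is small in $\mathcal{C}^1$, so that $\hat F=F-(\text{correction})$ remains a diffeomorphism while becoming holomorphic. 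Then $\hat F(W_{\delta})$ is a Stein neighborhood of $A$ biholomorphic to the model $W_{\delta}$, and letting $\delta$ and $V$ shrink yields the desired basis.

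Finally, to guarantee that $A$ really does admit a Stein neighborhood basis to begin with—and to absorb the limited boundary regularity—I would work with the exhaustion by genuine subvarieties $A_{\epsilon}=f(\overline{D_{\epsilon}})$, where $D_{\epsilon}=\{\rho<-\epsilon\}$ and $f$ is holomorphic on $\overline{D_{\epsilon}}\subset D$, applying Theorem \ref{izrekg} to each $A_{\epsilon}$ and passing to the limit. The principal difficulty is thus entirely the boundary analysis: arranging the $\dibar$-correction with uniform, up-to-the-boundary estimates despite $f$ being merely $\mathcal{C}^2$, which is precisely where the strong pseudoconvexity of $bD$ is indispensable.
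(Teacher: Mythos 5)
Your overall architecture (a holomorphic model bundle $\theta$, an almost-holomorphic tubular diffeomorphism $F$, then a $\overline{\partial}$-correction) matches the paper's, but three steps have genuine gaps. First, ``Grauert's Oka principle'' does not deliver what you need for $\theta$: Grauert's theorem classifies holomorphic bundles over Stein spaces up to topological equivalence, whereas your normal bundle $N$ is only of class $\mathcal{C}^1$ over the compact set $\overline{D}$ (holomorphic in $D$), and you need an isomorphism $\theta|_{\overline{D}}\cong N$ that is $\mathcal{C}^1$ up to $bD$ and holomorphic in $D$. This boundary-regular Oka principle is precisely the content of the theorems of Leiterer and Heunemann (Theorem B for sheaves of class $\mathcal{A}^0(D)$ on strongly pseudoconvex domains, and approximation of $\mathcal{A}^r$-subbundles by holomorphic ones); the paper obtains $\theta$ this way, realizing the normal bundle as a complemented $\mathcal{A}^{r-1}$-subbundle of a trivial bundle $\overline{D}\times\mathbb{C}^k$ (built from flows of spanning holomorphic vector fields on a Stein neighborhood of $A$) and then invoking Heunemann's approximation theorem. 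Second, your correction $\hat{F}=F-(\text{correction})$ makes no sense as written: $F$ takes values in the complex manifold $X$, not in a linear space, so before solving $\overline{\partial}$ one must embed a Stein neighborhood $\omega\supset A$ into $\mathbb{C}^N$ and compose the corrected map with a holomorphic retraction onto the image — which is exactly why Theorem \ref{izrekg} must be applied to $A$ itself at the outset (it is also needed to produce the spanning vector fields via Cartan's Theorem A). Moreover, even once the correction is small in $\mathcal{C}^1$, injectivity does not guarantee that the image of the corrected map still covers $A$; the paper needs a topological degree argument for this, since points of $A$ near $bA$ could escape the image.

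Third, your closing paragraph is wrong: applying Theorem \ref{izrekg} to the interior approximations $A_{\epsilon}=f(\overline{D_{\epsilon}})$ and ``passing to the limit'' cannot work — a Stein neighborhood of $A_{\epsilon}$ need not contain $A$ (nor even $f(bD)$), decreasing intersections of open sets are not open, and the union of such neighborhoods need only contain $f(D)$, not its closure. No limit is needed: $A$ itself satisfies the hypotheses of Theorem \ref{izrekg} (it is a compact subvariety with embedded strongly pseudoconvex $\mathcal{C}^2$-boundary and Stein interior, precisely because $f$ is a $\mathcal{C}^2$-embedding holomorphic in $D$), and the paper applies it directly. Finally, a caution on your $\overline{\partial}$-estimates: with $\mathcal{C}^2$ data, $\overline{\partial}F$ is only $\mathcal{C}^1$, and uniform up-to-the-boundary $\mathcal{C}^1$-estimates on a shrinking family of domains are delicate; the paper sidesteps this by combining a uniform sup-norm estimate on $U_{\epsilon}$ (uniform because $U_{\epsilon}$ is $\mathcal{C}^2$-close to $U_0$) with interior Bochner--Martinelli estimates on $U_{\frac{\epsilon}{2}}$, trading the loss $\epsilon^{-|\alpha|}$ against the high-order vanishing of $\overline{\partial}G$ along $\pi^{-1}(D)$.
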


Theorem \ref{izrek2} is known in two special cases:
When $S$ is an open Riemann surface (see \cite[Theorem 1.7]{lit2}),
and when $X$ is the total space of a holomorphic submersion
$h\colon X \to S$ and $f\colon \overline{D}\to X$ is a 
continuous section of $h$ that is holomorphic in $D$
(see \cite[Theorem 1.1]{lit25}). In the latter result, which is
proved by the method of holomorphic sprays, merely the continuity
of $f$ up to the boundary suffices. In the general case considered here,
the stronger hypothesis in our Theorem \ref{izrek2} 
is justified by the following example (see \cite[Example 5.4]{lit25}):

\smallskip
\textit{There exists a smooth injective map $f$ from 
the closed unit ball $\overline {\,\mathbb{B}}$ in $\C^5$ into $\C^{8}$, 
that is a holomorphic embedding of the open unit ball $\mathbb B$, such that 
$f(\overline {\,\mathbb{B}})$ has no basis of Stein neighborhoods.}
\smallskip

The bundle $\theta$ in Theorem \ref{izrek2} is essentially the normal bundle 
of $A$ in $X$. If this bundle is trivial, the proof for the Riemann surface case 
given in \cite{lit2} applies without essential changes and gives a basis of
open Stein neighborhoods of $\overline{D}$ that are biholomorphic to domains 
in $S\times \mathbb{C}^m$, where $m=\dim X-\dim S$. 
If on the other hand the normal bundle of $A$ in $X$ is nontrivial, 
we employ a somewhat different construction 
(see Proposition \ref{trditev22} below).

Theorem \ref{izrek2} enables us to prove the following approximation result.

\begin{posledica}
\label{corollary1}
Let $X$ and $Y$ be complex manifolds, and let $A\subset X$ be
as in Theorem \ref{izrek2}. Then every continuous map 
$g\colon A\to Y$ that is holomorphic in the interior 
$A \backslash bA$ of $A$ can be approximated, uniformly 
on $A$, by holomorphic maps from open neighborhoods of $A$ to $Y$. 
\end{posledica}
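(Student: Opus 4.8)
The plan is to reduce the approximation problem to the existence of a tubular Stein neighborhood supplied by Theorem \ref{izrek2}, and then to solve the approximation on that neighborhood. First I would apply Theorem \ref{izrek2} to the embedding $f\colon\overline D\hookrightarrow X$ (with $A=f(\overline D)$) to obtain a holomorphic vector bundle $\pi\colon\theta\to U_{\overline D}$ and a basis of open Stein neighborhoods $\{\Omega_j\}$ of $A$ in $X$, each biholomorphic via some $\Psi_j$ to a neighborhood of the zero section of $\theta|_{\overline D}$. The key point of this reduction is that these neighborhoods $\Omega_j$ are Stein and shrink down to $A$, so it suffices to extend $g$ to a holomorphic map on one of them approximating $g$ on $A$, since composing with the holomorphic retraction onto $A$ is not what we want — rather we want a genuine holomorphic map defined on an open set.

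Next I would transfer the problem to the bundle model. Under $\Psi_j$ the set $A$ corresponds to the zero section $Z\cong\overline D$ of $\theta|_{\overline D}$, and $g$ becomes a continuous map $\tilde g\colon Z\to Y$ holomorphic on the interior $Z\setminus bZ\cong D$. Because $Z$ is the zero section, it carries a natural holomorphic structure from $U_{\overline D}$ and the retraction $\pi$ restricts to the identity there. The aim is to produce, for each neighborhood basis element, a holomorphic map from an open Stein neighborhood $W$ of $Z$ in $\theta$ into $Y$ that is uniformly close to $\tilde g\circ\pi$ on $Z$. Here the structure of $Z\cong\overline D$ as a strongly pseudoconvex domain with $\mathcal C^2$-boundary in the Stein manifold $S$ is exactly what lets us invoke the Mergelyan-type approximation on strongly pseudoconvex domains: a map continuous up to a strongly pseudoconvex $\mathcal C^2$-boundary and holomorphic inside can be approximated uniformly on $\overline D$ by maps holomorphic in neighborhoods of $\overline D$.

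The main obstacle is that $Y$ is an arbitrary complex manifold, so there is no linear structure in the target and one cannot simply approximate componentwise. The standard device is to embed the problem locally: cover the compact graph of $g$ by finitely many charts of $Y$, use a partition of unity argument together with a holomorphic spray or a tubular neighborhood of the diagonal in $Y\times Y$ (equivalently, a dominating holomorphic spray on $Y$) to linearize the correction, and then patch the local approximations. Concretely, I would fix a holomorphic embedding of a neighborhood of $g(A)$ or, more robustly, use the fact that near the graph $\{(x,g(x))\}$ in $\overline D\times Y$ one can compose with a fiber-preserving holomorphic map absorbing small perturbations. The technical heart is thus to combine the scalar approximation on the strongly pseudoconvex domain (to get maps holomorphic in a neighborhood of $\overline D$ in $S$, hence of $Z$ in $\theta$) with a manifold-valued correction step that keeps the image in $Y$, after which composing with $\Psi_j^{-1}$ yields the desired holomorphic approximants on $\Omega_j\supset A$.

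I expect the delicate point to be the interface between the boundary regularity and the target geometry: ensuring that the manifold-valued approximation produced near $\overline D$ extends holomorphically to a full open neighborhood in the ambient $S$ (and hence in $\theta$, and hence in $X$), rather than merely on $\overline D$ itself. This is precisely where Theorem \ref{izrek2} is indispensable — it guarantees that such neighborhoods exist and are Stein, so that Cartan's Theorem B and the Oka principle for maps into $Y$ are available to globalize the patched local data.
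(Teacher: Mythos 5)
Your reduction to the bundle model via Theorem \ref{izrek2} matches the paper's first step (the paper uses the sharper Theorem \ref{izrek3}), and the idea of working with the fiber-constant map $\tilde g\circ\pi$ is also the right germ of the argument. One inaccuracy in the setup: under the biholomorphism furnished by Theorem \ref{izrek3}, the set $A$ does \emph{not} correspond to the zero section of $\theta|_{\overline D}$, but to the graph of an $\mathcal{A}^r$-section $\phi_\epsilon$ of $\theta|_{\overline{D'}}$ over a slightly different strongly pseudoconvex domain $D'$; this is repairable (transfer $g$ to the base as $g\circ\phi_\epsilon\in\mathcal{A}^0(D',Y)$, or parametrize $A$ by $\pi|_A$), but your proof should not pretend $A$ is the zero section.

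The genuine gap is your treatment of the arbitrary target $Y$, which is the technical heart of the statement. Every device you propose for the manifold-valued correction fails for a general complex manifold $Y$: a dominating holomorphic spray on $Y$ exists only for elliptic (Oka-type) manifolds; a holomorphic tubular neighborhood (retraction) of the diagonal in $Y\times Y$ need not exist, since the diagonal need not have a Stein neighborhood; a neighborhood of $g(A)$ in $Y$ need not be Stein, so it cannot be embedded in $\mathbb{C}^N$ with a holomorphic retraction; and the ``Oka principle for maps into $Y$'' invoked in your last paragraph is simply false for general $Y$ --- indeed the paper points out that precisely for this reason the approximating maps must be allowed to have shrinking domains unless $Y$ satisfies the Oka property. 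The paper closes this gap not by hand but by quoting the theorem of Drinovec-Drnov\v{s}ek and Forstneri\v{c} \cite[Theorem 1.2]{lit11}: every map of class $\mathcal{A}^r(U,Y)$ on a relatively compact strongly pseudoconvex domain $U$ in a Stein manifold can be approximated in the $\mathcal{C}^r(\overline U,Y)$-topology by maps holomorphic in open neighborhoods of $\overline U$; its proof constructs and glues holomorphic sprays of maps over the \emph{domain} (using Cartan pairs and the Steinness of the source side), requiring no structure whatsoever on $Y$. Concretely, in the reduced bundle situation the paper forms a strongly pseudoconvex tube $U_0$ of the form (\ref{ue}) around the zero section, translates it by the section $\phi$ to get a strongly pseudoconvex domain $U$ in the Stein total space with $A\subset\overline U\subset\pi^{-1}(\overline D)$, extends $g$ fiber-constantly to $G\in\mathcal{A}^r(U,Y)$, and applies the cited theorem, obtaining maps holomorphic on neighborhoods of $\overline U\supset A$. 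If you replace your patching sketch by a citation of that theorem --- applied either to the tube as in the paper, or directly to $g\circ\phi_\epsilon\in\mathcal{A}^0(D',Y)$ on the base followed by composition with $\pi$ --- your argument becomes correct; as written, the key step rests on tools that do not exist in the stated generality.
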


Corollary \ref{corollary1} is a special case of Theorem \ref{CR-approx}
in section \ref{approximation}. In general the domains of the approximating maps in 
Corollary \ref{corollary1} shrink down to $A$; only for a certain
special class of target manifolds $Y$ (i.e., for those
satisfying the Oka property, see \cite{lit5}) one obtains 
a sequence of approximating maps from a fixed neighborhood 
of $A$ to $Y$.

\section{Stein neighborhood basis}\label{Sb}

In this section we prove Theorem \ref{izrekg}.

We begin with  preparatory material. 
Given a $\mathcal{C}^2$-function $u$ on a complex manifold $X$, 
we define a Hermitean metric on the complexified tangent bundle 
$\mathbb{C}TX =\mathbb{C}\otimes_{\mathbb{R}} TX$ 
in a system of local holomorphic coordinates $z=(z_1,\ldots,z_n)$ 
on $X$ by
$$H(u)_z=\sum_{j,k=1}^n\frac{\partial^2 u}{\partial z_j\partial\bar{z}_k }(z)\, \textrm{d}z_j\otimes\textrm{d}\bar{z}_k.$$
The definition is independent of the choice of holomorphic coordinates. 
The {\em Levi form} of $u$ is given by
\[
	\mathcal{L}_u(z;\xi)= \left\langle H(u)_z, \xi \otimes \bar \xi\right\rangle =
  \left\langle \partial\overline{\partial} u(z), \xi \wedge \bar \xi\, \right\rangle, \quad \xi \in T_z^{1,0}X,
\]
where $T_z^{1,0}X$ is the eigenspace corresponding to the eigenvalue $i$ 
of the underlying almost complex structure operator $J$ on $\mathbb{C}TX$. 
In local coordinates we have 
\[
   \mathcal{L}_u(z;\xi)= \sum_{j,k =1}^n \frac{\partial^2 u}{\partial z_j\partial\bar{z}_k }(z)
\, \xi_j\overline{\xi}_k, \quad \xi=\sum_{j=1}^n \xi_j \frac{\partial}{\partial z_j}\in T_z^{1,0}X.
\]
A function $u$ is {\em strictly plurisubharmonic} if and only if $H(u)$ is a 
positive definite Hermitean metric on $X$, and this is the case if and only if
$\mathcal{L}_u$ is a positive definite Hermitean quadratic form.

Recall that a function $u$ on a complex space $X$ (with singularities) is said to be strictly plurisubharmonic, if there exist a cover of $X$ by open sets $\{U_{\lambda}\}_{\lambda \in I}$ and holomorphic embeddings 
$Z_{\lambda}\colon U_{\lambda} \to U_{\lambda}'\subset \mathbb{C}^N, \lambda \in I$,
where $Z_{\lambda}(U_{\lambda})$ is a closed complex subvariety in a domain $U_{\lambda}'\subset \mathbb{C}^N$,
such that $(u \circ Z_{\lambda}^{-1})|_{Z_{\lambda}(U_{\lambda})}$ admits an extension 
to $U_{\lambda}'$ that is strictly plurisubharmonic. 
The notion of strict plurisubharmonity  does not depend on 
the choice of the covering, nor on the embeddings $Z_{\lambda}, \lambda \in I$.

Let $A$ be as in Theorem \ref{izrekg}. For simplicity we shall assume that $A$ has pure dimension $n$,
although the same proof will also apply in the case of irreducible
components of different dimensions.

The  conditions on $A$ imply that for every point $p\in bA$ there exist 
an open Stein neighborhood $U_p\subset X$ of $p$, with $U_p \cap K=\emptyset$, 
and a holomorphic embedding $Z_p=(z,w)\colon U_p\to\mathbb{C}^{n +n_p}$ 
such that $Z_p(U_p)$ is a closed complex subvariety of the polydisc 
\[
   U_p '=\{(z,w)\in \mathbb{C}^{n +n_p} \mid |z_1|, \ldots , |z_n|, |w_1|, \ldots , |w_{n_p}|<1 \}=\Delta^{n+n_p},
\]   
where $z=(z_1,\ldots, z_n)\in\C^n$ and $w=(w_1,\ldots w_{n_p})\in\C^{n_p}$, such that
\begin{equation}\label{L1}
Z_p(A\cap U_p)=\{(z,w)\in U_p'\mid z\in \Gamma_p, \ w=g_p(z)\}.
\end{equation}
Here we denoted by
\begin{equation}\label{L3}
\Gamma_p=\{z\in \mathbb{C}^n \mid |z_1|, \ldots , |z_n|<1, h_p(z)\leq 0\}
\end{equation}
a connected convex set with the interior 
\begin{equation}\label{L4}
\Gamma_p\backslash b\Gamma_p=\{z\in \mathbb{C}^n \mid |z_1|, \ldots , |z_n|<1, h_p(z)<0\},
\end{equation}
where $h_p$ is a $\mathcal{C}^2$ strictly convex real function (its real Hessian is positive), with d$h_j \neq 0$,
and $g_p\colon \Delta^{n} \to\Delta^{n_p}$ is a $\mathcal{C}^2$-map that 
is holomorphic in the interior of $\Gamma_p$ (see Figure \ref{slika2}).

\begin{figure}
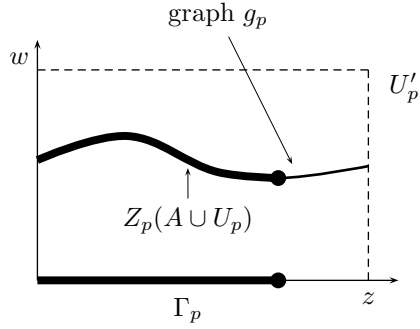

\psset{unit=0.8 cm}

\pspicture(-3,-1)(8.5,5)
\psline[linewidth=0.5pt,linearc=.15]{->}(0,0)(6,0)
\psline[linewidth=0.5pt,linearc=.15]{->}(0,0)(0,4)
\psline[linewidth=3pt,linearc=.15]{-}(0,0)(4,0)
\pscurve[linewidth=1pt,showpoints=false]{-}(0,2)(1.5,2.4)(3,1.8)(4,1.7)(5.5,1.9)
\pscurve[linewidth=3pt,showpoints=false]{-}(0,2)(1.5,2.4)(3,1.8)(4,1.7)
\psline[linewidth=0.5pt,linestyle=dashed,dash=3pt 2pt,linearc=.15]{-}(5.5,0)(5.5,3.5)
\psline[linewidth=0.5pt,linestyle=dashed,dash=3pt 2pt,linearc=.15]{-}(0,3.5)(5.5,3.5)
\psdots[linewidth=2pt](4,1.7)(4,0)

\rput(2.5,1){\rnode{g}{$Z_p(A\cup U_p)$}}
\rput(2.5,2){\rnode{h}{}}
\ncline[nodesep=3pt, linewidth=0.3pt]{->}{g}{h}
\rput(3,4.4){\rnode{i}{graph $g_p$}}
\rput(4.3,1.8){\rnode{j}{}}
\ncline[nodesep=3pt, linewidth=0.3pt]{->}{i}{j}
\rput(-0.3,3.7){$w$}
\rput(5.5,-0.3){$z$}
\rput(2.5,-0.5){$\Gamma_p$}
\rput(6.1,3.2){$U_p '$}
\endpspicture
\caption{The sets $\Gamma_p$ and $Z_p(A\cap U_p)$}
\label{slika2}
\end{figure}

We choose smaller open sets $V_p$ and $V_p'$ such that $V_p \Subset V_p'\Subset U_p$ with $p \in V_p$. Since $bA$ is compact, we can cover $bA$ with finitely many sets $V_{1},\ldots , V_{m}$ described above.
Let $Z_j$, $g_j$ and $h_j$ be the corresponding maps, and let 
$U_j$, $V_j'$ $U_j'$ and $\Gamma_j$ be the corresponding sets for all 
$j\in\{1,\ldots ,m\}$. For every $j\in \{1,\ldots ,m\}$ we define
\begin{eqnarray}
\label{phij}
	\phi_j(x) &=& w(x)-g_j\bigl(z(x)\bigr), \quad x \in U_j,
\end{eqnarray}
\begin{eqnarray}	
\label{lamj}	
	\Lambda_j &=& \{x\in U_{j}\colon z(x)\in \Gamma_j\}, \\
\label{lamj'}
	\Lambda_j' &=& (V_j\cap\Lambda_j) \backslash \bigcup_{i=1, i \neq j}^m(V_i \backslash \Lambda_i).
\end{eqnarray}
The map $\phi_j$ and the sets $\Lambda_j$ and $\Lambda_j'$ describe 
the subvariety $A \subset X$ locally in a neighborhood of its boundary $bA\cap U_j$ 
(see Figure \ref{slika3}).

\begin{figure}[ht]
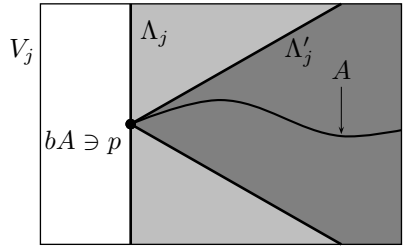

\psset{unit=0.8 cm}
\pspicture(2,-0.5)(10,4.5)
\pspolygon*[linecolor=lightgray](4.5,4)(9,4)(9,0)(4.5,0)
\pspolygon*[linecolor=gray](8,0)(9,0)(9,4)(8,4)(4.5,2)
\pspolygon[linewidth=0.5pt](3,0)(9,0)(9,4)(3,4)
\psline[linewidth=1pt,linearc=.15]{-}(4.5,0)(4.5,4)
\psline[linewidth=1pt,linearc=.15]{*-*}(4.5,2)(4.5,2)
\psline[linewidth=1pt,linearc=.15]{-}(4.5,2)(8,4)
\psline[linewidth=1pt,linearc=.15]{-}(4.5,2)(8,0)
\pscurve[showpoints=false]{-}(4.5,2)(6,2.4)(8,1.8)(9,1.9)
\rput(3.7,1.7){$bA\ni p$}
\rput(4.9,3.5){$\Lambda_j$}
\rput(7.3,3.2){$\Lambda_j '$}
\rput(8,2.9){\rnode{g}{$A$}}
\rput(8,1.7){\rnode{h}{}}
\ncline[nodesep=3pt, linewidth=0.3pt]{->}{g}{h}
\rput(2.7,3.2){$V_j$}
\endpspicture
\caption{The sets $\Lambda_j$ and $\Lambda_j'$}
\label{slika3}
\end{figure}

When $\dim A=1$, each connected component of $bA$ 
is a closed Jordan curve of class $\mathcal{C}^2$ which 
admits an open Stein neighborhood (see \cite[Lemma 2.2]{lit2}), 
and in this case we can choose $\Lambda_j$ and $\phi_j$ globally around 
the boundary $bA$. When $\dim A>1$, such Stein neighborhoods clearly
do not exist, and hence this can be done only locally 
as described above.

Let us choose additional open sets $V_{m+1},\ldots ,V_N$ and slightly larger open sets $U_{m+1},\ldots ,U_N$ ($V_i \Subset U_i$) in $X$ whose closures do not intersect any of the sets $U_j \backslash \Lambda_j'$ 
for every $j \in \{1,\ldots m\}$, and such that 
\[
   A \cup K \subset \bigcup_{i=1} ^N V_j.
\]
By choosing the sets $V_j\subset U_j$ sufficiently small, we get holomorphic maps 
$\phi_j\colon U_j\to\mathbb{C}^{n_j}$ whose components generate the ideal sheaf of 
$A$ at every point of $U_j$. 
(If $U_j\cap A=\emptyset$ for some $j\in\{ m+1,\ldots , N\}$, 
we take $n_j=1$ and $\phi_j(x)=1$ for every $x\in U_j$.)
For consistency of notation we set $\Lambda_j=U_j$ for $j = m+1,\ldots , N$.

The following is \cite[Lemma 2.5]{lit2}; we recall the proof since some of the 
settings will be used later on.

\begin{trditev}\label{trditev}
Given a compact complex subvariety $A\subset X$ with embedded 
$\mathcal{C}^2$ strongly pseudoconvex boundary $bA$ and Stein interior $A\backslash bA$,
there exists a $\mathcal{C}^2$-function $\rho_2 \colon X\to\mathbb{R}$ that is 
strictly plurisubharmonic in an open neighborhood of $A$ in $X$.
\end{trditev}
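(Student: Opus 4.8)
The plan is to produce $\rho_2$ by gluing, via a regularized maximum $\rmax$, finitely many \emph{local} strictly plurisubharmonic functions, the gluing being made consistent by the two hypotheses: Steinness of the interior provides a global skeleton on $A\setminus bA$, and strong pseudoconvexity of $bA$ lets this skeleton be continued, as a bounded strictly plurisubharmonic function, across the boundary into an ambient neighborhood. The essential point is that every complex space is locally embeddable, so on each chart $U_j$ the restriction of a Euclidean squared norm $|Z_j|^2$ is already strictly plurisubharmonic; the entire difficulty lies in choosing the local functions and the additive constants so that their regularized maximum remains strictly plurisubharmonic on a full neighborhood of the compact set $A$ in $X$. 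For a general compact subvariety this is impossible --- a compact complex manifold carries no strictly plurisubharmonic function --- so the hypotheses must enter precisely here.

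First the interior. Since $A\setminus bA$ is Stein, it carries, in the sense recalled above, a strictly plurisubharmonic exhaustion $\tau\colon A\setminus bA\to\mathbb{R}$, with $\tau\to+\infty$ toward $bA$. Applying Siu's theorem to a compact core $A_0\Subset A\setminus bA$ (a compact Stein subvariety of $X\setminus bA$) yields a Stein, hence ambient, neighborhood of $A_0$ in $X$, on which we obtain a strictly plurisubharmonic $\widetilde{\tau}$ agreeing with a large sublevel piece of $\tau$; the charts $U_j$ with $j>m$ are absorbed here, using $|Z_j|^2$ where convenient.

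Next the boundary. On each $U_j$ with $j\le m$ the function $h_j$ of \eqref{L3} is strictly convex in the real coordinates of $z\in\C^n$, hence strictly plurisubharmonic, and by \eqref{L1} the piece $A\cap U_j$ is a graph over $\Gamma_j=\{h_j\le 0\}$. Composing with the projection $z(x)$ and adding a small multiple of $|Z_j|^2$ gives a function $v_j=h_j\circ z+\varepsilon|Z_j|^2$ that is strictly plurisubharmonic on all of $U_j$ (on both sides of $bA$) and that, by strong pseudoconvexity, stays uniformly below the rapidly growing values of $\widetilde{\tau}$ on the interior side of a thin collar about $bA$. Thus each $v_j$ provides a strictly plurisubharmonic ``cap'' reaching across $bA$ into the ambient space.

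Finally we glue. After adjusting the additive constants of $\widetilde{\tau}$ and of the $v_j$ so that, in a neighborhood of each stratum, exactly one of the functions attains the pointwise maximum while all others remain strictly below it, we set $\rho_2=\rmax(\widetilde{\tau},v_1,\dots,v_m)$ near $A$ and extend it to $X$ by any $\mathcal{C}^2$ cutoff. Since a regularized maximum of plurisubharmonic functions is plurisubharmonic, and is strictly plurisubharmonic wherever every \emph{active} argument is, the choice of constants makes $\rho_2$ strictly plurisubharmonic on a neighborhood of $A$. \textbf{The main obstacle} is exactly this last bookkeeping: arranging a single consistent choice of levels so that near $bA$ the bounded caps $v_j$ dominate while in the deep interior $\widetilde{\tau}$ dominates, with no function ever becoming active outside the region where it is defined and strictly plurisubharmonic. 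The feasibility of such a choice is precisely what the exhaustion property of $\tau$ (values $\to+\infty$ at $bA$) and the strong pseudoconvexity of $bA$ guarantee; verifying the matching quantitatively on the overlaps is the one genuinely delicate computation.
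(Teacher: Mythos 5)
Your interior step and your boundary caps are individually sound (each $v_j=h_j\circ z+\varepsilon|Z_j|^2$ is indeed strictly plurisubharmonic on $U_j$, since strict convexity of $h_j$ gives plurisubharmonicity of $h_j\circ z$ and the embedding term supplies strictness), but the gluing step contains a genuine error, not just unfinished bookkeeping: the inequality you would need points in the opposite direction from the one your hypotheses provide. For $\rho_2=\rmax(\widetilde{\tau},v_1,\dots,v_m)$ to be defined and plurisubharmonic on a full neighborhood of $A$ --- which must contain $bA$, where $\widetilde{\tau}$ does not exist --- the caps $v_j$ must \emph{strictly dominate} $\widetilde{\tau}$ near the edge of the domain of $\widetilde{\tau}$, so that by Lemma \ref{lemastrogopsh} (\ref{lemmaiii}) the value of $\widetilde{\tau}$ stops contributing before its domain runs out. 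But you chose $\tau\to+\infty$ at $bA$, so $\widetilde{\tau}$ is unbounded on every collar about $bA$ while each $v_j$ is bounded there; no adjustment of additive constants can reverse this. You in fact assert the wrong-direction inequality (``$v_j$ stays uniformly below the rapidly growing values of $\widetilde{\tau}$'') as if it helped, whereas it is exactly what forces the regularized maximum to equal $\widetilde{\tau}$ arbitrarily close to $bA$, so the glued function is either undefined at $bA$ or jumps discontinuously from the large values of $\widetilde{\tau}$ down to the bounded values of $v_j$. The cure is to work with a \emph{bounded} strictly plurisubharmonic exhaustion, as the paper does with $\rho_1\colon A\to(-\infty,0]$, $\rho_1|_{bA}=0$; but then the ``rapid growth at $bA$'' on which your feasibility claim rests is no longer available, and the matching mechanism has to be rebuilt from scratch. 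A second, independent gap is the cap-to-cap matching on overlaps $U_j\cap U_k$ near $bA$: additive constants cannot arrange that exactly one function is active near each point, because where the chart $U_j$ ends some other cap must strictly dominate $v_j$, and two unrelated caps have a difference of no fixed sign; the standard repair is to add to each local function a term tending to $-\infty$ at the edge of its own chart, which is what the paper does in Proposition \ref{trditev2} with the functions $\tau_j$ --- but you do neither.

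For comparison, the paper's proof of this proposition avoids $\rmax$ entirely. All local functions $\psi_j$ are built so as to restrict to the \emph{same} function $\rho_1$ on $A$ (in boundary charts $\psi_j$ is taken constant in the normal variable $w$); they are then combined by a partition of unity $\{\vartheta_j\}$, which is harmless precisely because the $\psi_j$ agree on $A$ and so the first-order error terms involving $d\vartheta_j$ cause no loss of positivity along $A$; finally, strict positivity in the directions normal to $A$ --- which the flat extensions $\psi_j$ lack --- is recovered by adding Demailly's terms $\epsilon^3\log\bigl(1+\epsilon^{-4}|\phi_j|^2\bigr)$, where the $\phi_j$ generate the ideal sheaf of $A$, and the resulting Levi form is checked to be positive definite on the compact set $A$ only, hence on a neighborhood by continuity. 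That combination --- identical restrictions on $A$ plus the logarithmic correction for the normal directions --- is the idea your proposal is missing and cannot be replaced by level adjustments in a regularized maximum.
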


\begin{proof}
The conditions on $A$ imply that there exists a strictly plurisubharmonic
bounded $\mathcal{C}^2$ exhaustion function
$\rho_1\colon A\to (-\infty,0]$ with $\rho_1|_{bA}=0$.

For $j\in\{1,\ldots,m\}$ let $Z_j\colon U_j\to U_j'$, $\Gamma_j$, $\Lambda_j$ and $\phi_j$ be as above.
Denote by $\psi_j'\colon \Gamma_j\times\mathbb{C}^{n_j}\to\mathbb{R}$ the unique function that is independent of the second variable $w\in\mathbb{C}^{n_j}$ and satisfies $\rho_1=\psi_j'\circ Z_j$ on $A\cap U_j$. 
We can extend $\psi_j'$ to a $\mathcal{C}^2$-function $\psi_j'\colon U_j'\to\mathbb{R}$ 
that is independent of the variable $w$ and set 
\begin{equation} \label{izraz1}
\psi_j=\psi_j'\circ Z_j\colon U_j\to\mathbb{R}.	
\end{equation}
Since $\psi_j|_{A\cap U_j}=\rho_1$ which is strongly plurisubharmonic, 
there is an open set $\widetilde{\Gamma}_j\subset \mathbb{C}^n$ 
containing $\Gamma_j$ such that $\psi_j$ is plurisubharmonic in the 
open set 
\begin{equation}\label{wU}
\widetilde{U}_j=\{x\in U_j\mid z(x)\in\widetilde{\Gamma}_j\}.
\end{equation}

For $j\in\{m+1,\ldots,N\}$ we get in a similar way a strictly plurisubharmonic 
$\mathcal{C}^2$-function $\psi_j\colon U_j \to\mathbb{R}$ extending 
$\rho_1|_{A\cap U_j}$. (For the purpose of this proof we need not consider
the sets $U_j$ that do not intersect $A$.)

We now use an argument from the proof of \cite[Theorem 4]{lit1}. 
Let $\{\vartheta_j\}_{j=1}^N$ be a smooth partition of unity in a neighborhood of $A$ in $X$ with $\supp\vartheta_j\subset U_j$ for every $j$. We fix a number $\epsilon>0$ and set 
\[
  \rho_2(x)=\sum_{j=1}^{N} \vartheta_j(x) \bigl(
  \psi_j(x)+\epsilon^3\log\bigl(1+\epsilon^{-4}|\phi_j(x)|^2 \bigr) \bigr).
\]
It can be verified that $\rho_2$ is strictly plurisubharmonic in a neighborhood of 
$A$ in $X$ provided that $\epsilon>0$ is chosen small enough. 
Indeed, since the function $\phi_j$ vanishes along $U_j \cap A$, we have 
\[
  i\partial\bar{\partial}\epsilon^3\log\bigl(1+\epsilon^{-4}|\phi_j|^2\bigr)=\epsilon^{-1}i\partial\bar{\partial}|\phi_j|^2
\]
on $U_j \cap A_{\textrm{reg}}$. It follows that the Levi form of $\rho_2$ is positive definite on $A$ in the directions normal to $A$. Since for $x \in A$ we have $\rho_2(x)=\sum_j\vartheta_j(x)\psi_j(x)=\rho_1(x)$, the Levi form of $\rho_2$ on $A$ is positive definite also in the directions tangent to $A$. 
\end{proof}

For the sake of completeness we also recall 
\cite[Lemma 5.18.]{lit4} which will be used in 
the proof of Proposition \ref{trditev2} below.

\begin{lemma}\label{lemastrogopsh}
Let $\theta\colon\mathbb{R}\to \mathbb{R}_+$ be a nonnegative 
smooth function with support in $[-1,1]$ and 
such that $\int_{\mathbb{R}}\theta(s)\textrm{d}s=1$ 
and $\int_{\mathbb{R}}s\theta(s)\textrm{d}s=0$.
For an arbitrary $\eta=(\eta_1,\ldots,\eta_p)\in (0,\infty)^p$, $p \in \mathbb{N}$, 
the regularized maximum function 
\[
  \rmax_{\eta}(t_1,\ldots ,t_p)=\int_{\mathbb{R}^n}\max (t_1+s_1,\ldots,t_p+s_p)
  \prod_{1\leq j\leq    n}\theta\left(\frac{s_j}{\eta_j}\right)
  \textrm{d}s_1\ldots\textrm{d}s_p
\]
satisfies the following properties:
\begin{enumerate}
\item \label{lemai} $\rmax_\eta(t_1,\ldots ,t_p)$ is nondecreasing in all variables, smooth and convex on $\mathbb{R}^n$,
\item \label{lemaii} $\max (t_1,\ldots ,t_p) \leq \rmax_\eta(t_1,\ldots ,t_p) \leq\max (t_1+\eta_1,\ldots ,t_p+\eta_p)$,
\item \label{lemmaiii} if $t_j+\eta_j \leq\max_{k\neq j} (t_k-\eta_k)$ is satisfied, then \\
$\rmax_\eta(t_1,\ldots ,t_p)=\rmax_{(\eta_1,\ldots ,\hat{\eta}_j,\ldots ,\eta_p)}(t_1,\ldots ,\hat{t}_j,\ldots ,t_p)$,
\item \label{lemaiv} $\rmax_{\eta}(t_1+a,\ldots ,t_p+a)=\rmax_{\eta}(t_1,\ldots ,t_p)+a$, for all $ a\in\mathbb{R}$,
\item \label{lemav} if $u_1,\ldots ,u_p$ are plurisubharmonic on $X$ and satisfy $\mathcal{L}_{u_j}(z;\xi)\geq \gamma_z(\xi)$
for a continuous Hermitean form $\gamma_z$ on $T_z^{1,0}X$, 
then $u=\rmax_{\eta}(u_1,\ldots ,u_p)$ is plurisubharmonic 
and satisfies $\mathcal{L}_{u}(z;\xi)\geq \gamma_z(\xi)$.
\end{enumerate}
\end{lemma}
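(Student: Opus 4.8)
The plan is to recognize $\rmax_\eta$ as an average of the convex function $M(t)=\max(t_1,\ldots,t_p)$ against a fixed probability measure whose barycenter vanishes, and then to read each assertion off from a corresponding elementary property of $M$. Normalizing the kernels to unit mass, write $\textrm{d}\mu_\eta(s)=\prod_{j=1}^p \eta_j^{-1}\theta(s_j/\eta_j)\,\textrm{d}s$; the two hypotheses $\int_{\mathbb{R}}\theta=1$ and $\int_{\mathbb{R}}s\,\theta(s)\,\textrm{d}s=0$ say precisely that $\mu_\eta$ is a probability measure supported in $\prod_j[-\eta_j,\eta_j]$ with barycenter at the origin, and $\rmax_\eta(t)=\int_{\mathbb{R}^p} M(t+s)\,\textrm{d}\mu_\eta(s)$. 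For \ref{lemai} I would substitute $u=t+s$, turning the kernel into $\prod_j\eta_j^{-1}\theta((u_j-t_j)/\eta_j)$, smooth and compactly supported in $u$ for each fixed $t$; since $M$ is locally bounded, differentiation under the integral is justified and every derivative lands on the smooth kernel, so $\rmax_\eta\in\mathcal{C}^\infty$. Monotonicity is inherited from $M$: if $t_k\le t_k'$ then $M(t+s)\le M(t'+s)$ for all $s$, and integrating against the nonnegative $\mu_\eta$ preserves the inequality. Convexity passes through the average as well, since for each fixed $s$ the map $t\mapsto M(t+s)$ is convex and a positive average of convex functions is convex.

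For the sandwich \ref{lemaii}, the upper bound is immediate because on the support of $\mu_\eta$ one has $t_j+s_j\le t_j+\eta_j$, whence $M(t+s)\le\max_j(t_j+\eta_j)$; integrating gives the right inequality. The lower bound $M(t)\le\rmax_\eta(t)$ is exactly Jensen's inequality for the convex $M$ together with the vanishing barycenter: $\rmax_\eta(t)=\int M(t+s)\,\textrm{d}\mu_\eta(s)\ge M\bigl(\int(t+s)\,\textrm{d}\mu_\eta(s)\bigr)=M(t)$. For \ref{lemmaiii}, the hypothesis $t_j+\eta_j\le\max_{k\neq j}(t_k-\eta_k)$ forces, for every $s$ in the support, $t_j+s_j\le t_j+\eta_j\le\max_{k\neq j}(t_k-\eta_k)\le\max_{k\neq j}(t_k+s_k)$, so the $j$-th entry never attains the maximum and the integrand is independent of $s_j$; the $s_j$-integral contributes the factor $1$ and one is left with the regularized maximum in the remaining variables. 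Finally \ref{lemaiv} follows from the diagonal equivariance $M(t+a\mathbf{1})=M(t)+a$ and the fact that $\mu_\eta$ has total mass $1$.

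The substantive point is \ref{lemav}, which I would treat by viewing $u=F\circ(u_1,\ldots,u_p)$ with $F=\rmax_\eta$ smooth, convex and nondecreasing in each argument. Differentiating \ref{lemaiv} in $a$ at $a=0$ yields the partition-of-unity identity $\sum_j \partial F/\partial t_j\equiv 1$, while \ref{lemai} gives $\partial F/\partial t_j\ge 0$ and a positive semidefinite Hessian $(\partial^2 F/\partial t_j\partial t_k)$. Assuming first that the $u_j$ are of class $\mathcal{C}^2$, the chain rule for $\partial\overline{\partial}$ gives, for $\xi\in T_z^{1,0}X$,
\[
  \mathcal{L}_u(z;\xi)=\sum_{j}\frac{\partial F}{\partial t_j}\,\mathcal{L}_{u_j}(z;\xi)
  +\sum_{j,k}\frac{\partial^2 F}{\partial t_j\partial t_k}\,\bigl(\partial u_j(\xi)\bigr)\overline{\bigl(\partial u_k(\xi)\bigr)},
\]
where $\partial u_j(\xi)$ denotes the value of the $(1,0)$-form $\partial u_j$ on $\xi$ and the second sum is $\ge 0$ by convexity of $F$. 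Using $\partial F/\partial t_j\ge 0$, the bound $\mathcal{L}_{u_j}\ge\gamma_z$ and $\sum_j\partial F/\partial t_j=1$, the first sum is $\ge\gamma_z(\xi)\sum_j\partial F/\partial t_j=\gamma_z(\xi)$; taking $\gamma_z\equiv 0$ shows $u$ is plurisubharmonic. The case of merely plurisubharmonic $u_j$ I would reduce to this one by replacing each $u_j$ by its decreasing smooth regularizations and passing to the limit, the Levi-form estimate being stable under this approximation.

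The main obstacle, as anticipated, is \ref{lemav}: first pinning down the identity $\sum_j \partial F/\partial t_j=1$, which is exactly what makes the estimate come out to $\gamma_z$ rather than a proper fraction of it, and then legitimizing the Hessian computation when the $u_j$ are only plurisubharmonic and hence possibly non-smooth, where the displayed chain rule must be read in the sense of currents and justified by regularization. Everything in \ref{lemai}--\ref{lemaiv} is a direct transcription of elementary convexity facts through the averaging operator and should present no real difficulty.
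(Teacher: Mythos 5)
Your proof is correct, but for the key part (5) it takes a genuinely different route from the paper's. Parts (1)--(4) are handled essentially as in the paper, which notes the change of variables $s_j\mapsto s_j-t_j$ for smoothness and calls the rest immediate; your averaging/Jensen write-up is that argument made explicit, and you correctly read the kernel as normalized to unit mass (the factor $\prod_j\eta_j^{-1}$, missing from the displayed formula, is needed for (2)--(4) to hold at all). For (5) you differentiate: the chain rule expresses $\mathcal{L}_u$ as $\sum_j(\partial F/\partial t_j)\,\mathcal{L}_{u_j}$ plus a Hessian term that is $\geq 0$ by convexity of $F$, and the identity $\sum_j\partial F/\partial t_j\equiv 1$, obtained by differentiating (4), turns the termwise bounds into $\mathcal{L}_u\geq\gamma_z$; merely plurisubharmonic $u_j$ are then treated by mollification. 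The paper never differentiates the $u_j$: it fixes $z_0$ and $\epsilon>0$, replaces each $u_j$ by $u_j'(z)=u_j(z)-\gamma_{z_0}(z-z_0)+\epsilon|z-z_0|^2$, which is plurisubharmonic near $z_0$ by continuity of $z\mapsto\gamma_z$, invokes the standard fact that a smooth nondecreasing convex function of plurisubharmonic functions is plurisubharmonic, and uses the equivariance (4) to identify $\rmax_\eta(u_1',\ldots,u_p')$ with $u(z)-\gamma_{z_0}(z-z_0)+\epsilon|z-z_0|^2$; letting $\epsilon\to 0$ gives the Levi bound at $z_0$. The paper's trick covers non-smooth plurisubharmonic $u_j$ in one stroke with no distributional chain rule; your computation is more explicit, and it directly covers $\mathcal{C}^2$ functions that need not be plurisubharmonic (your argument never uses $\mathcal{L}_{u_j}\geq 0$, which is exactly the strengthening the paper remarks on right after the lemma), but the regularization step you defer is where the only real care is needed: mollification gives $\mathcal{L}_{u_j\ast\chi_\delta}\geq\gamma\ast\chi_\delta$ rather than $\geq\gamma$, so you must invoke continuity of $z\mapsto\gamma_z$ to absorb the error --- the same continuity the paper exploits through its $\epsilon|z-z_0|^2$ term.
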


Note that the regularized maximum function with $\eta=(\eta_1,\ldots,\eta_p)$ can be 
chosen as close as we desire to the usual maximum as all $\eta_j$ $(1\leq j \leq p)$ approach to $0$.

\begin{proof}
The change of variables $s_j\mapsto s_j-t_j$ shows that $\rmax_{\eta}$ is smooth. All properties are immediate consequences of the definitions, except perhaps (\ref{lemav}). Since $\rmax_{\eta}$ is nondecreasing and convex, $\rmax_{\eta}(u_1,\ldots , u_p)$ is plurisubharmonic. Fix a point $z_0 \in X$ and $\epsilon >0$. All functions $$u_j'(z)=u_j(z)-\gamma_{z_0}(z-z_0)+\epsilon |z-z_0|^2$$ are plurisubharmonic near $z_0$ and it follows that 
\[
\rmax_{\eta}\bigl(u_1'(z),\ldots , u_p'(z)\bigr)=u(z)-\gamma_{z_0}(z-z_0)+\epsilon |z-z_0|^2
\]
is also plurisubharmonic near $z_0$. Since $\epsilon>0$ was arbitrary, (\ref{lemav}) follows. 
\end{proof}

We notice that the estimate in the conclusion of (\ref{lemav}) in Lemma \ref{lemastrogopsh} holds also if $u_1,\ldots,u_p$ are not necessary plurisubharmic, but only satisfy the given estimates. We shall use this fact later on in Proposition \ref{trditev2} for functions whose Levi forms have bounded negative part.

We now recall \cite[Lemma 2.4]{lit2} and sketch its proof,
referring to  \cite{lit2,lit5} for the details.

\begin{lemma}\label{lemaspmeja}
Let $U\subset X$ be an open set containing $A\cup K$, where $A$ and $K$ 
are as in Theorem \ref{izrekg}. 
Then there exist a neighborhood $W$ of $A\cup K$ in $X$ with $\overline{W}\subset U$ and a strictly plurisubharmonic $\mathcal{C}^2$-function $\rho\colon \overline{W}\to\mathbb{R}$ such that $\rho>0 $ on $bW$ and $\rho<0$ on $K$.
\end{lemma}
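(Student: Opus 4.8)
The plan is to build $\rho$ by patching together two strictly plurisubharmonic functions, one adapted to $A$ near its boundary and one adapted to $K$, and then to take for $W$ a small sublevel set of the result. First I would set up the piece adapted to $A$, reusing the machinery of Proposition \ref{trditev}. The Stein interior $A\setminus bA$ carries a strictly plurisubharmonic bounded $\mathcal{C}^2$-exhaustion $\rho_1\colon A\to(-\infty,0]$ with $\rho_1|_{bA}=0$. Since $K\cap bA=\emptyset$, the compact set $A\cap K$ lies in the interior $A\setminus bA$, so $\rho_1\le -2c<0$ on $A\cap K$ for some $c>0$; its $\mathcal{O}(A)$-convexity guarantees that its sublevel neighborhoods inside $A$ are well behaved, which is what makes the later gluing respect the sublevel structure. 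Proposition \ref{trditev} then furnishes a $\mathcal{C}^2$-function $\rho_2$, strictly plurisubharmonic in an open neighborhood $\Omega_A\subset U$ of $A$, with $\rho_2|_A=\rho_1$; in particular $\rho_2\le 0$ on $A$ and $\rho_2=0$ on $bA$, so $A\subset\{\rho_2<\delta\}$ for every $\delta>0$.

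Second I would produce the piece adapted to $K$. Because $\Omega$ is Stein and $K$ is $\mathcal{O}(\Omega)$-convex, there is a smooth strictly plurisubharmonic function $\sigma$ on $\Omega$ with $K\subset\{\sigma<0\}$ and $\{\sigma\le 0\}\Subset\Omega$. Shrinking this sublevel set if necessary, which is possible since $K\cap bA=\emptyset$ and both sets are compact, I may assume $\{\sigma\le 0\}\cap bA=\emptyset$ and that $\sigma<0$ holds on $K$ with a definite margin.

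Third comes the gluing, which I expect to be the main obstacle. The two functions are compatible on the overlap: near $A\cap K$ both $\rho_2$ and $\sigma$ are defined and strictly plurisubharmonic, near $bA$ only $\rho_2$ is relevant (the region $\{\sigma\le 0\}$ being disjoint from $bA$), and near $K\setminus A$ only $\sigma$ is relevant. I would form $\rho=\rmax_\eta(\rho_2,\sigma+a)$ on the overlap for a suitable shift $a$ and small $\eta$, and extend it by $\rho_2$ near $bA$ and by $\sigma+a$ near $K\setminus A$. The delicate point is to choose $a$, $\eta$, and the sublevel thresholds so that on each seam one of the two functions exceeds the other by more than the corresponding $\eta_j$; then the localization property (\ref{lemmaiii}) of the regularized maximum forces the three pieces to agree and to define a single $\mathcal{C}^2$-function in a full neighborhood $G$ of $A\cup K$ with $\overline{G}\subset U$. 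By property (\ref{lemav}) this $\rho$ is strictly plurisubharmonic on $G$, and the sign arrangements made above give $\rho<0$ on all of $K$ (it equals $\rho_1<0$ on $A\cap K$, equals $\sigma+a<0$ on $K\setminus A$, and is the regularized maximum of two functions that are negative with margin on the overlap).

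Finally I would pick a small regular value $\delta>0$ and set $W=\{x\in G\mid \rho(x)<\delta\}$. Since $\rho\le 0$ on $A$ and $\rho<0$ on $K$, one has $A\cup K\subset W$; for $\delta$ small enough $\overline{W}=\{\rho\le\delta\}$ stays inside $G$, so that $\overline{W}\subset U$ and $\rho$ is strictly plurisubharmonic on $\overline{W}$. Then $\rho=\delta>0$ on $bW=\{\rho=\delta\}$ and $\rho<0$ on $K$, as required. The routine verifications, together with the precise choice of constants in the patching and the relative compactness of $\{\rho\le\delta\}$, follow the arguments of \cite{lit2,lit5}.
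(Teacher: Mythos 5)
Your skeleton (Proposition \ref{trditev} plus an $\rmax_\eta$-gluing of an $A$-adapted function with a $K$-adapted function) is the same as the paper's, but the two steps you defer as ``delicate'' and ``routine'' are exactly where the proof lives, and as you have set things up they fail. Consider first the gluing. You build $\rho_1$ (hence $\rho_2$) with no reference to $K$, and $\sigma$ with no reference to $\rho_1$, and then ask for a shift $a$, an $\eta$, and thresholds so that on each seam one function beats the other by $\eta$, invoking Lemma \ref{lemastrogopsh}~(\ref{lemmaiii}). But there are two seams with opposite requirements: an \emph{outer} seam, which must separate (inside $G$, and crossing $A$) the region where $\sigma+a$ is in play from the region near $bA$ where it must be discarded ($\Omega$ need not contain the part of $G$ near $bA$), giving $a\le\min_{S_1\cap A}(\rho_1-\sigma)-\eta$; and an \emph{inner} seam, sitting over $A\cap K$ at the edge of the neighborhood on which $\rho_2$ is defined (the hypotheses allow $K$ to leave \emph{every} neighborhood of $A$), where $\sigma+a$ must take over, giving $a\ge\max_{A\cap K}(\rho_1-\sigma)+\eta$ up to errors that vanish as $G$ is thinned. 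These bounds are incompatible unless the sublevel sets of $\rho_1|_A$ are nested with those of $\sigma|_A$ around $A\cap K$, which your independent choices do not ensure. Concretely, take $X=\C^2$, $A=\{(z,0):|z|\le1\}$, $\rho_1=|z|^2-1$, $K=\{|z-1/2|\le 1/20\}\times\{|w|\le 10\}$, and $\sigma\approx|z-1/2|^2-c$ near $A$: then $\rho_1-\sigma=\mathrm{Re}\,z-5/4+c$ on $A$, its maximum over $A\cap K$ exceeds its minimum on \emph{any} curve in $A$ separating $A\cap K$ from $bA$ by at least $1/10$, so no admissible $a$ exists, for any choice of seams and any $\eta$. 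This is precisely why the paper does not choose the two functions independently: using the $\mathcal{O}(A)$-convexity of $A\cap K$ \emph{together with} the $\mathcal{O}(\Omega)$-convexity of $K$, it constructs a pair $(\rho_1,\widetilde{\rho}_0)$ with $\widetilde{\rho}_0=\rho_1$ identically on a neighborhood $\overline{\Omega}_1$ of $K$ (so near $K$ there is no seam to negotiate at all), and with $\rho_1-1>\widetilde{\rho}_0$ near $bA$, so the one remaining seam is handled by subtracting a constant. Your one-sentence appeal to $\mathcal{O}(A)$-convexity (``sublevel neighborhoods are well behaved'') is never actually used, and it is exactly the missing construction.

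The second gap is the exit. Even granting a glued $\rho$, the choice $W=\{\rho<\delta\}$ does not work with your normalization $\rho_1|_{bA}=0$: near $bA$ your $\rho$ equals $\rho_2$, which vanishes on $bA$ and is completely uncontrolled -- possibly negative -- at points of $G\setminus A$ just beyond $bA$ (strict plurisubharmonicity gives no lower bound in the normal directions). Hence $\{\rho\le\delta\}$, indeed $\{\rho\le 0\}$, may contain a connected set joining $bA$ to $bG$; in that case $\{\rho<\delta\}$ is not relatively compact in $G$, its boundary is not $\{\rho=\delta\}$, and in fact \emph{no} open $W$ with $A\cup K\subset W$, $\overline{W}\subset G$ and $\rho>0$ on $bW$ can exist for this $\rho$, since $bW$ would have to meet that connected set. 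So the claim ``for $\delta$ small enough $\overline{W}=\{\rho\le\delta\}$ stays inside $G$'' is not a routine verification; it is false in general. The paper's renormalization $\rho_1|_{bA}=c_1>2$, combined with gluing $\widetilde{\rho}_0$ against $\rho_2-1$ and the positivity $\widetilde{\rho}_0>0$ on $\Omega\setminus\Omega_1$, makes the glued function exceed $1$ near $bA$ and stay positive along the whole outer fringe of its domain; $W$ is then taken to be a sufficiently thin neighborhood of $A\cup\overline{\Omega}_1$, not a sublevel set. Without these normalizations your final step cannot be repaired.
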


\begin{proof}
By using the $\mathcal{O}$-convexity properties of $K$ and $A\cap K$ we can construct
smooth functions $\rho_1,\widetilde{\rho}_0\colon X\to \mathbb{R}$ with the properties listed below. The restrictions  $\widetilde{\rho}_0|_A$ and $\rho_1|_A$ are strictly plurisubharmonic bounded exhaustions and $\rho_1$ is strictly plurisubharmonic on a compact neighborhood $K'$ of $K$ in $U\cap \Omega$, where $K'\cap A\subset A\backslash bA$. We also have 
\[
\widetilde{\rho}_0=\rho_1 \textrm{ on } \overline{\Omega}_1,\quad \widetilde{\rho}_0<0 \textrm{ on } K,\quad \widetilde{\rho}_0>0 \textrm{ on } \Omega \backslash \Omega_1,
\]
\[\quad \rho_1\geq\widetilde{\rho}_0>0 \textrm{ on } A\backslash \overline{\Omega}_1,\quad \rho_1>\widetilde{\rho}+1\textrm{ on }A\backslash \overline{\Omega}_{2},\quad \rho_1|_{bA}=c_1>2
\]
for some constant $c_1>2$ and open sets $\Omega_1$ and $\Omega_2$ in $\Omega$, and such that $K \subset \Omega_1 \subset \overline{\Omega}_1 \subset \Omega_2\subset K'$.

We use Proposition \ref{trditev} to get a $\mathcal{C}^2$ strictly plurisubharmonic function $\rho_2$ in a neighborhood of $A$ and such that $\rho_1|_A=\rho_2|_A$.
Next we set $$\rho=\rmax (\widetilde{\rho}_0,\rho_2-1),$$ where $\rmax$ is a regularized maximum (see Lemma \ref{lemastrogopsh}). We observe that there is a compact neighborhood $\overline{W}\subset U$ of the set
$A \cup \overline{\Omega}_1$ such that before running out of the domain of one of these functions inside $\overline{W}$,
the second one is larger and hence takes over. It follows that $\rho$ is well defined and smooth on $\overline{W}$. 
As $\rho=\widetilde{\rho_0}$ on $\Omega_1$, we have $\rho<0$ on $K$. Furthermore,  since $\widetilde{\rho}_0>0$ on $\Omega \backslash \Omega_1$ and $\rho_1\geq\widetilde{\rho}_0>0$ on $A\backslash \overline{\Omega}_1$, we see that after shrinking $W$ around $A \cup \overline{\Omega}_1$ we get $\rho>0$ on $bW$. 
\end{proof}

Choose an open set $V\subset X$ with $A\cup K\subset V\Subset \bigcup_{j=1} ^N V_j$ and set
\begin{eqnarray}\label{lam}
	\Lambda=\bigcup_{j=m+1} ^{N}(V_j \cap \overline{V})\cup \bigcup_{j=1} ^{m}(\Lambda_j' \cap \overline{V}).
\end{eqnarray}
Recall that the maps $\phi_j$ defined in the beginning of this section are holomorphic on $\Lambda_j$ for every $j \in \{1,\ldots, N\}$, hence $\phi_j$ is holomorphic on $V_j \cap \Lambda$ for all $j$.

As in \cite{lit2} we choose smooth functions $\tau_j\colon V_j\to\mathbb{R}$ which tend to $-\infty$ at $bV_j$. For every $\delta\in(0,1]$ we set
\begin{eqnarray*}
  v_{\delta,j}(x) &=& \log\bigl(\delta+|\phi_j(x)|^2\bigr)+\tau_j(x), \quad x\in V_j, \\
  v_\delta(x) &=& \rmax_{\eta} (\ldots, v_{\delta,j}(x),\ldots),
\end{eqnarray*}
where the regularized maximum (see Lemma \ref{lemastrogopsh}) is taken over all indices $j\in\{1,\ldots , N\}$ for which $x\in V_j$.

The following proposition is essential in the proof of Theorem \ref{izrekg}.

\begin{trditev} \label{trditev2}
The function $v_{\delta}\colon V_{\delta}\to\mathbb{R}$ $(\delta\in(0,1])$ defined as above is of class $\mathcal{C}^2$ on an open neighborhood $V_{\delta}$ of the set $\Lambda$. Furthermore, $v_{0}(x)=\lim_{\delta\to 0}v_{\delta}(x)$ is a continuous function on $\Lambda \backslash A$, it is of class $\mathcal{C}^2$ in the interior of $V\backslash \Lambda$,  and it satisfies $\{v_0=-\infty\}=A$. There exists a constant $M>-\infty$ such that 
\[
	i\partial\bar{\partial}v_{\delta}(x) > M,\qquad x\in \Lambda,\ 0<\delta\le 1.
\]
\end{trditev}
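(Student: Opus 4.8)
The plan is to establish the regularity assertions and the uniform Levi-form bound separately, in both cases relying on the localization property (\ref{lemmaiii}) of the regularized maximum together with the fact that $\tau_j\to-\infty$ at $bV_j$.

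I would first treat regularity. For fixed $\delta\in(0,1]$ each $v_{\delta,j}=\log(\delta+|\phi_j|^2)+\tau_j$ is of class $\mathcal{C}^2$ on $V_j$, since $\delta+|\phi_j|^2\ge\delta>0$ and $\tau_j$ is smooth there. As $x$ approaches $bV_j$ from inside $V_j$ the $\log$-term stays bounded while $\tau_j(x)\to-\infty$, so $v_{\delta,j}(x)\to-\infty$ and, by property (\ref{lemmaiii}) of Lemma \ref{lemastrogopsh}, the index $j$ is dropped from the regularized maximum. Thus near every point of $\Lambda$ the maximum effectively runs over finitely many indices $k$ for which the point lies well inside $V_k$, where $v_{\delta,k}$ is $\mathcal{C}^2$; since $\rmax_\eta$ is smooth by property (\ref{lemai}), $v_\delta$ is $\mathcal{C}^2$ on an open neighborhood $V_\delta$ of $\Lambda$. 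For the limit $v_0$ I would use that the components of the $\phi_j$ generate the ideal sheaf of $A$, so $x\in A$ exactly when $\phi_j(x)=0$ for every active index $j$, i.e. when every $v_{0,j}(x)=\log|\phi_j(x)|^2+\tau_j(x)$ equals $-\infty$. On $\Lambda\setminus A$ at least one term is finite, the $\log$-terms decrease monotonically to their limits as $\delta\to0$, and the terms tending to $-\infty$ are again discarded by (\ref{lemmaiii}); hence $v_0$ is a regularized maximum of finitely many continuous functions, so it is continuous on $\Lambda\setminus A$ and of class $\mathcal{C}^2$ on the interior of $V\setminus\Lambda$, where all active $\phi_j$ are bounded away from $0$. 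The identity $\{v_0=-\infty\}=A$ is immediate from property (\ref{lemaii}), since $\rmax_\eta$ equals $-\infty$ precisely when all of its arguments do.

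The heart of the argument is the uniform estimate on $i\partial\bar\partial v_\delta$. The key observation is that $\log(\delta+|\phi_j|^2)$ is plurisubharmonic for every $\delta\in(0,1]$: on $V_j\cap\Lambda$, where $\phi_j$ is holomorphic, the map $F_{\delta,j}=(\sqrt\delta,\phi_{j,1},\ldots,\phi_{j,n_j})$ is holomorphic and nonvanishing with $|F_{\delta,j}|^2=\delta+|\phi_j|^2$, so $\log(\delta+|\phi_j|^2)=\log|F_{\delta,j}|^2$ is the logarithm of the squared norm of a nonvanishing holomorphic map and therefore $i\partial\bar\partial\log(\delta+|\phi_j|^2)\ge0$, uniformly in $\delta$. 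Hence $i\partial\bar\partial v_{\delta,j}\ge i\partial\bar\partial\tau_j$ on $V_j\cap\Lambda$. On the region where index $j$ is active, which by the first step lies in a compact subset of $V_j$, the $\mathcal{C}^2$-function $\tau_j$ has a Levi form bounded below, say $\mathcal{L}_{\tau_j}\ge-C_j$. Applying the estimate version of property (\ref{lemav}) (valid for functions with bounded negative Levi part, as noted after Lemma \ref{lemastrogopsh}) to the regularized maximum over the active indices then gives $i\partial\bar\partial v_\delta\ge-\max_jC_j$ on $\Lambda$ for all $\delta\in(0,1]$; decreasing the bound slightly yields the strict inequality with a single constant $M$.

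The main obstacle is the claim underlying both steps, namely that the active region of each index $j$ stays inside a fixed compact subset of $V_j$ \emph{uniformly in} $\delta\in(0,1]$, so that the unbounded behaviour of $\mathcal{L}_{\tau_j}$ near $bV_j$ never enters the estimate. This amounts to showing that near $bV_j$ some competing difference $v_{\delta,j}-v_{\delta,k}$ tends to $-\infty$ beyond the window set by $\eta$, for which the term $\tau_j-\tau_k$ must dominate the bounded ratio $\log\frac{\delta+|\phi_j|^2}{\delta+|\phi_k|^2}$; this in turn forces one to use the precise covering structure of the sets $V_j$ (and the fact that only $\tau_j$, not the competing $\tau_k$, blows up at $bV_j$). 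Once this localization is in place, the bound $i\partial\bar\partial\log(\delta+|\phi_j|^2)\ge0$ makes the whole estimate automatically uniform in $\delta$.
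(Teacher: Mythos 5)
Your overall architecture matches the paper's: localize the regularized maximum via property (\ref{lemmaiii}) so that only indices with $x$ well inside $V_j$ contribute, use $i\partial\bar{\partial}\log(\delta+|\phi_j|^2)\geq 0$ on $\Lambda_j$ to reduce the Levi bound to a bound on $i\partial\bar{\partial}\tau_j$ over a compact part of $V_j$, and invoke the estimate version of property (\ref{lemav}). However, there is a genuine gap, and you have in fact pointed at it yourself: everything hinges on the claim that the ratios $\frac{\delta+|\phi_j|^2}{\delta+|\phi_k|^2}$ are bounded \emph{uniformly in} $\delta\in(0,1]$ near $A$, and you never prove this; you simply call the ratio ``bounded'' and label the localization ``the main obstacle.'' Your fixed-$\delta$ regularity argument does not substitute for it: with only the crude bounds $\log(\delta+|\phi_k|^2)\geq\log\delta$ and $\log(\delta+|\phi_j|^2)\leq\log(\delta+C_j)$, the region where index $j$ is guaranteed to be discarded requires $\tau_j-\tau_k\lesssim\log\delta$, which degenerates toward $bV_j$ as $\delta\to 0$; so the neighborhoods on which the competing index drops out shrink with $\delta$, and no single constant $M$ bounding $i\partial\bar{\partial}v_\delta$ on all of $\Lambda$ for all $\delta\in(0,1]$ is obtained.

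Establishing that uniform ratio bound is the technical core of the paper's proof, and it is nontrivial precisely because both $\phi_j$ and $\phi_k$ vanish on $A$, so near $A$ the quotient $|\phi_k|^2/|\phi_j|^2$ is a genuine $0/0$ problem. In the interior of $A$ one can argue as you suggest, expressing the generators of the ideal sheaf in terms of one another (this gives boundedness on compact subsets of the interior of $\Lambda_j\cap\Lambda_k$). But near $bA$ this sheaf-theoretic argument is unavailable: the maps $\phi_j(x)=w(x)-g_j(z(x))$ are holomorphic only on one side ($\Lambda_j$) and merely $\mathcal{C}^2$ up to $bA$. The paper handles this by a second-order Taylor expansion of $|\widetilde{\phi}_j|^2$ along $Z_j(A\cap V_j')$, showing that $|\phi_j(y,t)|^2/|(y-y_0,t-t_0)|^2$ is bounded above and, using that the gradients of the components of $\phi_j$ span the normal bundle, bounded below by a positive constant on the tube $T_j$ over points of $A$; it then compares charts via the bounded distortion of the biholomorphic transition maps (estimate (\ref{kvad})). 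Only after these quantitative two-sided estimates does one get $\frac{\delta+|\phi_k|^2}{\delta+|\phi_j|^2}\leq M'$ uniformly in $\delta$, which in turn makes the choice of the sets $W_j\Subset W_j'\Subset V_j$ and the inequality (\ref{vv}) uniform in $\delta$. Without supplying this analysis (or an equivalent), both the uniform localization and the uniform Levi-form bound in the statement remain unproved.
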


\begin{proof}
We adapt the proof \cite[Lemma 2.3]{lit2} (which uses ideas from \cite{lit1})
to our situation. Let $\phi_j$ be the functions defined at the beginning of this section
(see especially (\ref{phij})). We begin by proving that the quotients 
$\frac{|\phi_k|}{|\phi_j|}$, for $j,k\in\{1,\ldots, N\}$, are bounded on the intersection of their domains 
near the subvariety $A$.

We first consider these quotients in the interior of $A$ away from $bA$. 
The generators $\phi_j$ and $\phi_k$ for the ideal sheaf of $A$ can be expressed in terms of one another in the interior of the set $\Lambda_j \cap \Lambda_k$, and hence $\frac{|\phi_k|}{|\phi_j|}$ is bounded in any relatively compact subset in the interior of $\Lambda_j \cap \Lambda_k$. The quotient $\frac{|\phi_k|}{|\phi_j|}$ is then uniformly bounded away from $bA$ on the set $(\Lambda_j \cap \overline{V}_j)\cap (\Lambda_k \cap \overline{V}_k)$. Remember also that $\Lambda_k \cap \overline{V}_k=\overline{V}_k$ for $k \in \{m+1,\ldots,N\}$.

Next we consider the expression $\frac{|\phi_k|^2}{|\phi_j|^2}$ for $j,k\in\{1,\ldots,m\}$ near the boundary $bA$ of the subvariety $A$. 
Recall that for any $j\in\{1,\ldots,m\}$ the map $\phi_j\colon U_j\to \Delta^{n_j}$ is defined by $\phi_j(x)=w(x)-g_j(z(x))$, where $g_j=(g_j^1,\ldots,g_j^{n_j})\colon\Delta^{n}\to\Delta^{n_j}$ is a $\mathcal{C}^2$-map that is holomorphic in the interior of $\Gamma_j$, and $Z_j=(z,w)\colon U_j\to U_j'\subset \mathbb{C}^{n +n_j}$ is a holomorphic embedding.
Set $\widetilde{\phi}_j(z,w)=w-g_j(z)$; then  
\[
   \bigl|\widetilde{\phi}_j(z,w)\bigr|^2=\sum_{p=1}^{n_j}\bigl(w_p-g_j^p(z)\bigr)\bigl(\overline{w}_p-\overline{g}_j^p(z)\bigr).
\]
Further we write $z_l=y_l+iy_{l+n}$ for $l\in \{1,\ldots,n\}$ and $w_p=t_p+it_{p+n_j}$ for $p\in \{1,\ldots,n_j\}$, where $y=(y_1, \ldots, y_{2n})$ and $t=(t_1,\ldots,t_{2n_j})$ are real coordinates on $\mathbb{C}^n \approx \mathbb{R}^{2n}$, respectively on $\mathbb{C}^{n_j}\approx \mathbb{R}^{2n_j}$, and $g_j^p=u_j^p+iu_j^{p+n_j}$ is the sum of real and imaginary part of $g_j^p$. We write down the Taylor expansion of $|\widetilde{\phi}_j|^2$ around a point $(y_0,t_0)\in U_j'$ such that $t_0=g_j(y_0)$:
\begin{eqnarray*}
	\bigl|\widetilde{\phi}_j(y_0+y,t_0+t)\bigr|^2 &=&
	\sum_{l=1}^{2n}\sum_{p=1}^{n_j}
				\biggl( \Bigl(\frac{\partial u_j^{p}}{\partial y_l}(y_0,t_0)\Bigr)^2 + 
       \Bigl( \frac{\partial u_j^{p+n_j}}{\partial y_{l}}(y_0,t_0)\Bigr)^2 \biggr)y_l^2 \\
       && + \sum_{p=1}^{2n_j}t_{p}^2 
       + \sum_{p=1}^{2n_j}\sum_{l=1}^{n}
          \frac{\partial u_j^{p}}{\partial y_l}(y_0,t_0)\frac{\partial u_j^{p}}
          {\partial y_{l+n}}(y_0,t_0)y_ly_{l+n}   \\
       && -
          \sum_{p=1}^{2n_j}\sum_{l=1}^{2n}\frac{\partial u_j^{p}}{\partial y_l}(y_0,t_0)y_lt_p+o(2).
\end{eqnarray*}
Denoting by 
$$\bigl\langle (y,t),(y',t')\bigr\rangle=\sum_{l=1}^{2n}y_ly_l'+\sum_{p=1}^{2n_j}t_p t_p',\quad 
 (y,t),(y',t')\in \mathbb{R}^{2(n+n_j)}$$ the standard real Euclidean inner product, 
we can write the above Taylor expansion in the form
\begin{eqnarray*}
\bigl|\widetilde{\phi}_j(y,t)\bigr|^2 &=& \frac{1}{2}\sum_{p=1}^{2n_j}\biggl|\Bigl\langle \grad_{(y_0,t_0)} \bigl(t_p-u_j^{p}(y,t)\bigr),(y-y_0,t-t_0)\Bigr\rangle \biggr|^2 \\
 && + \frac{1}{2}\sum_{l=1}^{2n}\sum_{p=1}^{n_j} 
 		\biggl( \Bigl(\frac{\partial u_j^{p}}{\partial y_l}(y_0,t_0)\Bigr)^2
    +  \Bigl(\frac{\partial u_j^{p+n_j}}{\partial y_{l}}(y_0,t_0)\Bigr)^2 \biggr) (y_l-y_{0l})^2  \\
    && +             \frac{1}{2}\sum_{p=1}^{2n_j}(t_{p}-t_{0p})^2+o(2).
\end{eqnarray*}    
If $o\bigl(|(y-y_0,t-t_0)|^2\bigr)$ denotes the remainder in the Taylor expansion up to terms of second order of $|\widetilde{\phi}_j|^2$ around a point $(y_0,t_0)$, then for any constant $\epsilon >0$ we can choose a constant $r>0$ such that the estimate 
\begin{equation}\label{ocena12}
\left|\frac{o\bigl(|(y-y_0,t-t_0)|^2\bigr)}{\bigl|(y-y_0,t-t_0)\bigr|^2}\right|<\frac{\epsilon}{2}
\end{equation}
is valid for all pairs $(y_0,t_0)\in Z_j(A\cap V_j')$ and $(y,t)\in B\bigl((y_0,t_0),r\bigr)\Subset U_j'$, where $B\bigl((y_0,t_0),r\bigr)$ is a ball in $\mathbb{C}^{n+n_j}$ centered at $(y_0,t_0)$ and with radius $r$.

Clearly $\grad (t_p-u_j^p(y,t))$ and partial derivatives of $u_j^{p}$ up to second order for $p \in \{1, \ldots, 2n_j\}$ are bounded on $Z_j(A\cap V_j')$. 
Hence the expressions 
\begin{eqnarray}\label{izraz3}
\sum_{p=1}^{2n_j}\biggl|\Bigl\langle \grad_{(y_0,t_0)} \bigl(t_p-u_j^p(y,t)\bigr),\frac{(y-y_0,t-t_0)}{\bigl|(y_0,t_0)\bigr|}\Bigr\rangle\biggr|^2
\end{eqnarray}
and
\begin{eqnarray*}
\frac{\sum_{l=1}^{2n}\sum_{p=1}^{n_j} \Bigl(\bigl(\frac{\partial u_j^{p}}{\partial y_l}(y_0,t_0)\bigr)^2+\bigl(\frac{\partial u_j^{p+n_j}}{\partial y_{l}}(y_0,t_0)\bigr)^2\Bigr)(y_l-y_{0l})^2+\sum_{p=1}^{2n_j}(t_{p}-t_{0p})^2}{|(y-y_0,t-t_0)|^2}
\end{eqnarray*}
are uniformly bounded from above for every $(y_0,t_0)\in Z_j(A\cap V_j')$ and every $(y,t)\in B\bigl((y_0,t_0),r\bigr)$. Therefore, if $r>0$ is chosen small 
enough,
\begin{eqnarray}\label{izraz6}
\frac{\bigl|\phi_j(y,t)\bigr|^2}{\bigl|(y-y_0,t-t_0)\bigr|^2}
\end{eqnarray}
is uniformly bounded from above for all $(y_0,t_0)\in Z_j(A\cap V_j')$ and every $(y,t)\in B((y_0,t_0),r)\subset U_j'$.

Let $\nu_j$ be the normal bundle to $Z_j(A\cap V_j')$ in $T\mathbb{C}^{n+n_j}|_{Z_j(A\cap V_j')}\approx Z_j(A\cap V_j')\times \mathbb{C}^{n+n_j}$ endowed with standard metrics.
Since the gradients of the components of $\phi_j$ generate the normal bundle $\nu_j$ to $Z_j(A \cap V_j')$, the expression (\ref{izraz3})
is positive for every pair of points $(y_0,t_0)\in Z_j(A\cap V_j')$ and $(y,t)\in U_j'$ such that $y=y_0$.
Set 
$$T_j=\{(y,t)\in U_j' \mid y\in z(A\cap \overline{V}_j), \quad \bigl|t-g_j(y)\bigr|<r\}.$$
Clearly the expression (\ref{izraz3}) is then greater than some small positive constant $\epsilon_{0}$ for every $(y_0,t_0)\in Z_j(A\cap \overline{V}_j)$ and for all corresponding $(y,t)\in T_j$ with $y=y_0$. 
Furthermore, the estimate (\ref{ocena12}) implies that the expression (\ref{izraz6})
is greater than $\frac{\epsilon_{0}}{2}$ for all $(y_0,t_0)\in Z_j(A\cap \overline{V}_j)$ and corresponding $(y,t)\in T_j$ $(y=y_0)$, provided that $r$ is choosen small enough. We may also assume that the constant $r$ is independent of $j \in \{1,\ldots,m\}$, hence the above statements hold for every $j$.

Let $(y',t')$ be real local coordinates on $U_k'$ and let $\varphi$ be a biholomorphic transition map between $Z_k(U_k)$ and $Z_j(U_j)$. By choosing $r>0$ sufficiently smaller we can achieve that the expression  
\begin{equation}\label{kvad}
\frac{\bigl|\varphi\bigl(y(x),t(x)\bigr)-\varphi\bigl(y(x_0),t(x_0)\bigr)\bigr|}{\bigl|\bigl(y(x)-y(x_0),t(x)-t(x_0)\bigr)\bigr|}=\frac{\bigl|\bigl(y'(x),t'(x)\bigr)-\bigl(y'(x_0),t'(x_0)\bigr)\bigr|}{\bigl|\bigl(y(x)-y(x_0),t(x)-t(x_0)\bigr)\bigr|}
\end{equation}
is uniformly bounded for every pair of points $x_0\in A\cap V_j'\cap V_k'$ and $x \in X$ such that $\bigl(y(x),t(x)\bigr) \in B\bigl(\bigl(y(x_0),t(x_0)\bigr),r\bigr)$.

According to (\ref{izraz6}) the quotients 
\[
\frac{\bigl|\widetilde{\phi}_k\bigr|^2\bigl(y'(x),t'(x)\bigr)}{\bigl|\bigl(y'(x)-y'(x_0),t'(x)-t'(x_0)\bigr)\bigr|^2} 
\textrm{\qquad and \quad}\frac{\bigl|\widetilde{\phi}_j\bigr|^2\bigl(y(x),t(x)\bigr)}{\bigl|\bigl(y(x)-y(x_0),t(x)-t(x_0)\bigr)\bigr|^2}
\]
are both uniformly bounded from below and from above by some positive constants, provided that $x_0\in A\cap V_j'\cap V_k'$ and $x \in Z_j^{-1}(T_j)\cap Z_k^{-1}(T_k)$ such that $y(x)=y(x_0)$, $y'(x)=y'(x_0)$.
Since the expression (\ref{kvad}) is also bounded with respect to such $x_0$ and $x$, we conclude that the quotient $\frac{|\phi_k|^2}{|\phi_j|^2}$ is bounded in the neighborhood $Z_j^{-1}(T_j)\cap Z_k^{-1}(T_k)$ of $bA \cap \overline{V}_j \cap \overline{V}_k$ in $\Lambda_j \cap \Lambda_k \cap \overline{V}_j \cap \overline{V}_k$ and therefore it is bounded also on $\Lambda_j \cap \Lambda_k \cap \overline{V}_j \cap \overline{V}_k$. It follows that the quotients 
\[
    \frac{\delta+|\phi_k|^2}{\delta+|\phi_j|^2}
\]
are bounded uniformly with respect to $\delta \in (0,1]$ by some constant $M'>0$ on $\Lambda_j \cap \Lambda_k \cap \overline{V}_j\cap \overline{V}_k$.

Since $\tau_j$ tends to $-\infty$ on $bV_j$, we claim that none of the values $\tau_j(x)$ respectively $v_{\delta,j}(x)$ for $x$ sufficiently near $bV_j$ 
contributes to the value $v_{\delta}(x)$ and this property is uniform with respect to $\delta \in (0,1]$. 
First we choose slightly smaller open sets $W_j\Subset V_j$ for every $j\in\{1,\ldots,N\}$ and such that $\Lambda \subset \overline{V} \subset \cup_{j=1} ^{N} W_j$. Clearly there is a constant $M_1>-\infty$ such that $\tau_j >M_1$ on $W_j$ for all $j$. Next we choose larger open sets $W_j'$ with $W_j\Subset W_j'\Subset V_j$ and such that $\tau_j<M_1-M'-2\eta$ on $V_j\backslash W_j'$ for all $j\in\{1,\ldots, N\}$. For any $x\in\Lambda\cap W_k \cap (V_j\backslash W_j')$ and $\delta\in (0,1]$ we then obtain 
\begin{equation}\label{vv}
v_{\delta,j}(x)-v_{\delta,k}(x)=\log\left(\frac{\delta+|\phi_j|^2}{\delta+|\phi_k|^2}\right)+\tau_j(x)-\tau_k(x)<-2\eta.
\end{equation}
By continuity, for every $\delta \in (0,1]$ there exists a neighborhod $V_{\delta} \Subset \cup_{j=1} ^{N} W_j$ of $\Lambda$ and such that $\frac{\delta  +|\phi_j|^2}{\delta+|\phi_k|^2}< M'$ on $V_{\delta}\cap (\overline{V}_j\cap \overline{V}_k)$ for all $k,j \in \{1,\ldots,N\}$.
Hence the inequality (\ref{vv}) above remains valid for every $x\in V_{\delta}\cap W_k \cap (V_j \backslash W_j')$. For any point $x \in \cup_{j=1}^{N}W_{j}$ denote by  $J_{x} \subset \{1,\ldots , N\}$ the nonempty set of all indices $j$ such that $x\in W_j'$. 
It is immediate from Lemma \ref{lemastrogopsh} (\ref{lemmaiii}), that for every $x\in V_\delta$, only the values $v_{\delta,j}(x)$ such that $j \in J_x$ might contribute to the value $v_{\delta}(x)$, and this proves the claim.

Furthermore, by continuity, for every $x\in V_{\delta}\cap W_k \cap (V_j \backslash W_j')$ there is some small neighborhood $V_{x,\delta}$ of $x$ in $V_{\delta}$, and such that the inequality (\ref{vv}) holds for every $y \in V_{x,\delta}$. It follows that only the values $v_{\delta,j}(y)$ with $j \in J_x$ might contribute to the value $v_{\delta}(y)$ for $y \in V_{x,\delta}$. Hence $v_{\delta}$ is of class $\mathcal{C}^2$ on $V_{\delta}$.

Remember that for $x \in V_j \cap \Lambda$ we have $v_{\delta,j}(x)\leq v_{\delta',j}(x)$ for $0<\delta \leq \delta'\leq 1$. Since the set $J_x$ is independent of $\delta \in (0,1]$, Lemma \ref{lemastrogopsh} (\ref{lemai}) then implies that $v_{\delta}$ decreases on $\Lambda$ as $\delta$ approaches to $0$. Observe that the convergence of $v_{0}(x)=\lim_{\delta\to 0}v_{\delta}(x)$ is locally uniform with respect to $\delta \in (0,1]$ on $\Lambda \backslash A$,   and $\{v_0=-\infty\}=A$.

Since the maps $\phi_j$ are holomorphic on $\Lambda_j$, we have the estimate $i\partial\bar{\partial}\log\bigl(\delta+|\phi_j|^2\bigr)\ge 0$ on $\Lambda_j$ and it follows
\[
	i\partial\bar{\partial}v_{\delta,j}(x)= 
	i\partial\bar{\partial} \log\bigl(\delta+|\phi_j(x)|^2\bigr)+ i\partial\bar\partial \tau_j(x) 
\geq i\partial\bar{\partial}\tau_j(x),
\qquad x \in \Lambda_j.
\] 
Clearly $i\partial\bar{\partial}\tau_j$ is bounded from below on the set 
$W_j'$ by some constant $M>-\infty$ for every $j\in\{1,\ldots , N\}$.
The claims proved in previous paragraphs and Lemma \ref{lemastrogopsh} (\ref{lemaiv}) now imply that $M$ is a uniform lower bound for $i\partial\bar{\partial} v_{\delta}$ on the set $\Lambda$. However, we cannot control $i\partial\bar{\partial} v_{\delta}$ from below outside of the set $ \Lambda$ since we do not have a uniform lower bound with respect to $\delta \in (0,1]$ for $i\partial\bar{\partial}\bigl(\log\bigl(\delta+|\phi_j(x)|^2\bigr)\bigr)$ there.
\end{proof}

Observe that in the case of complex curves \cite{lit2} the sets $V_{\delta}$ can be chosen independent of $\delta$. This is true also in the case when $X$ is a complex manifold since we have a global extension of the subvariety $A$ over the boundary to a closed $\mathcal{C}^2$ subvariety (without boundary) and hence we can apply the proof on the extension of the subvariety $A$.

The following tehnical lemma will be needed in the construction of global plu\-ri\-sub\-har\-mo\-nic functions in a neighborhood of $A$ in $X$ (see Lemma \ref{dodatek} below).

\begin{lemma} \label{lematehnicna}
Let $U \Subset \mathbb{C}^{n}$ be an open convex set and let $h \colon U\to\mathbb{R}$ be a $\mathcal{C}^2$ strictly convex function. Set $D=\{z \in U \mid h(z)<0\}\Subset\mathbb{C}^{n}$ and assume $bD\cap U=\{z \in U \mid h(z)=0\}$ and d$h(x)\neq 0$ on $U$. Then for every compact set $L\subset\partial D\cap U$ and every open neighborhood $U_L$ of $L$ in $U$, there exists a $\mathcal{C}^2$ strictly convex function $\widetilde{h}\leq h$ and a convex set $\widetilde{D}=\{z \in U \mid \widetilde{h}(z)<0\}$ such that $D\cup L\subset\widetilde{D}\subset D\cup U_L$ and d$\widetilde{h}\neq 0$ on $b\widetilde{D}\cap U$.
\end{lemma}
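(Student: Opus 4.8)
The plan is to enlarge $D$ near $L$ by subtracting from $h$ a small nonnegative bump supported in $U_L$. First I would fix an open neighborhood $L'$ of $L$ with $L\subset L'\Subset U_L$ and choose a smooth cutoff $\chi\colon U\to[0,1]$ with $\chi\equiv 1$ on $L'$ and $\supp\chi\Subset U_L$. For a parameter $\epsilon>0$ to be fixed later I set
\[
  \widetilde{h}=h-\epsilon\chi,\qquad \widetilde{D}=\{z\in U\mid \widetilde{h}(z)<0\}.
\]
Then $\widetilde{h}$ is of class $\mathcal{C}^2$ and $\widetilde{h}\leq h$ since $\chi\geq 0$. Because $\widetilde{h}$ coincides with $h$ off the compact set $K_0:=\supp\chi\Subset U$, all modifications occur on $K_0$, which is where the uniform estimates will be extracted.

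Next I would verify the two inclusions, none of which depends on the size of $\epsilon$. If $z\in D$ then $\widetilde{h}(z)\leq h(z)<0$, so $D\subset\widetilde{D}$; if $z\in L$ then $h(z)=0$ and $\chi(z)=1$, so $\widetilde{h}(z)=-\epsilon<0$, giving $L\subset\widetilde{D}$; hence $D\cup L\subset\widetilde{D}$. Conversely, if $z\in\widetilde{D}\setminus D$ then $h(z)\geq 0>\widetilde{h}(z)$ forces $\chi(z)>0$, so $z\in\supp\chi\subset U_L$; thus $\widetilde{D}\subset D\cup U_L$. Moreover $b\widetilde{D}\cap U\subset\{\widetilde{h}=0\}$ by continuity, since $\widetilde{D}$ is open and a boundary point $p\in U$ satisfies both $\widetilde{h}(p)\geq 0$ (as $p\notin\widetilde{D}$) and $\widetilde{h}(p)\leq 0$ (as $p\in\overline{\widetilde{D}}$).

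The substance of the argument is the choice of $\epsilon$ guaranteeing simultaneously that $\widetilde{h}$ stays strictly convex and that $\grad\widetilde{h}$ (equivalently $\mathrm{d}\widetilde{h}$, which vanishes exactly where $\grad\widetilde{h}$ does) does not vanish on the new boundary. Since $h$ is strictly convex, its real Hessian satisfies $\mathrm{Hess}\,h\geq c\,I$ for some $c>0$ on the compact set $K_0$, while $\lVert\mathrm{Hess}\,\chi\rVert\leq C$ there; hence $\mathrm{Hess}\,\widetilde{h}\geq(c-\epsilon C)I$ on $K_0$, and off $K_0$ one has $\widetilde{h}=h$, which is strictly convex. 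Likewise, as $\grad h$ never vanishes on $U$ and $K_0$ is compact in $U$, there is $\mu>0$ with $\lvert\grad h\rvert\geq\mu$ on $K_0$, while $\lvert\grad\chi\rvert\leq C'$; thus $\lvert\grad\widetilde{h}\rvert\geq\mu-\epsilon C'$ on $K_0$, and $\grad\widetilde{h}=\grad h\neq 0$ off $K_0$. Choosing $\epsilon<\min\{c/C,\ \mu/C'\}$ makes $\widetilde{h}$ strictly convex on $U$ and keeps $\grad\widetilde{h}$ nonvanishing on all of $\{\widetilde{h}=0\}$, in particular on $b\widetilde{D}\cap U$.

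Finally, $\widetilde{D}$ is convex, being a sublevel set of the convex function $\widetilde{h}$ on the convex set $U$. The only genuine obstacle is the simultaneous smallness requirement on $\epsilon$, and it is dispatched by the compactness of $\supp\chi$ inside $U$, which furnishes the lower bounds on $\mathrm{Hess}\,h$ and on $\lvert\grad h\rvert$ at once.
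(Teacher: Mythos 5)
Your proof is correct and follows essentially the same route as the paper: both subtract $\epsilon\chi$ from $h$, where $\chi$ is a nonnegative smooth bump supported in $U_L$ and positive on $L$, and take $\epsilon>0$ small enough (via compactness of $\supp\chi$) to preserve strict convexity and the nonvanishing of the differential. Your write-up merely makes explicit the Hessian and gradient estimates that the paper leaves implicit.
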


\begin{proof}
Let $\chi\in\mathcal{C}_0 ^{\infty}(U_L)$ be a smooth nonnegative function with compact support in $U_L$ and let $\chi >0$ on $L$. For sufficiently small $\epsilon >0$ the function $\widetilde{h}=h-\epsilon\chi$ is a $\mathcal{C}^2$ strictly convex with d$\widetilde{h}(x)\neq 0$ at any point $x\in b\widetilde{D}\cap U$, and it has all other required properties.
\end{proof}

The same argument works also in the case when $h$ is a $\mathcal{C}^2$ strictly plurisubharmonic function. 
See, for example, \cite[Corollary 1.5.20]{lit6}.

\begin{lemma}\label{dodatek}
Let $A$ and $K$ be as in Theorem \ref{trditev} and let $\Lambda$ be the set of the form (\ref{lam}). Then there exist an open neighborhood $V$ of $A\cup K$ in $X$ and a nonnegative plurisubharmonic function $\psi\colon V\to\mathbb{R}_{+}$ such that it vanishes on $A\cup K$ and it is positive on $V\backslash\Lambda$. 
\end{lemma}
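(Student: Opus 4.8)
The plan is to build $\psi$ as a genuine maximum of nonnegative plurisubharmonic blocks coming from the convex defining functions $h_j$.

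First I would exploit convexity. For each boundary chart $j\in\{1,\ldots,m\}$ the function $h_j$ is (strictly) convex on $\Delta^{n}$ and the projection $z=z(x)$ is holomorphic on $U_j$. Since a convex function on $\C^{n}\cong\mathbb{R}^{2n}$ is plurisubharmonic and plurisubharmonicity is preserved under holomorphic maps, the composition $h_j\circ z$ is plurisubharmonic on $U_j$; it is negative on the interior of $\Lambda_j$, vanishes on $\{x\mid z(x)\in b\Gamma_j\}$, and is positive on $U_j\setminus\Lambda_j$. Hence $\sigma_j:=\max(h_j\circ z,0)$ is a nonnegative plurisubharmonic function vanishing exactly on $\Lambda_j$ and positive on $V_j\setminus\Lambda_j$. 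From the analysis of (\ref{lamj})--(\ref{lam}) one checks that, near $bA$, a point $x\in\overline V$ lies in $\Lambda$ precisely when $z(x)\in\Gamma_i$ for every $i\le m$ with $x\in V_i$; equivalently $x\in V\setminus\Lambda$ (near $bA$) iff $\sigma_i(x)>0$, i.e. $(h_i\circ z)(x)>0$, for some such $i$.

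Next I would glue the blocks by a maximum. On a thin neighborhood $V$ of $A\cup K$ set
\[
\psi(x)=\max\Bigl(0,\ \max_{\,j\le m,\ x\in V_j}(h_j\circ z)(x)\Bigr).
\]
Each entry is plurisubharmonic, so the pointwise maximum is plurisubharmonic wherever the index set is locally constant, and $\psi\ge 0$ by construction. Its zero set is exactly $\bigcap_i\{h_i\circ z\le 0\}=\Lambda$ near $bA$, so $\psi=0$ on $A$ (there every $h_i\circ z\le 0$) and $\psi>0$ on $V\setminus\Lambda$. Near $K$ no boundary chart is present, since the $U_j$ with $j\le m$ avoid $K$, so the inner maximum is taken over the empty set and $\psi\equiv 0$; for the interior charts $j>m$ one has $\Lambda_j=U_j$ and no block is needed, and such points of $\overline V$ lie in $\Lambda$. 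Thus, \emph{granting that $\psi$ is globally well defined and plurisubharmonic on $V$}, all three required properties follow at once.

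The delicate point, which I expect to be the main obstacle, is the behaviour of this maximum across a chart boundary $bV_j$, where the index $j$ enters or leaves. The envelope is plurisubharmonic only if it is upper semicontinuous there, i.e. if no block $h_j\circ z$ is the strict maximum at a point of $bV_j\cap\overline V$ (otherwise $\psi$ would jump down as $x$ exits $V_j$). Away from $bA$ this is automatic once $V$ is thin, because there $z(x)$ lies deep inside $\Gamma_j$ near $bV_j$ and the block is very negative; the only trouble is at the ``corners'' $bA\cap bV_j$. I would remove it exactly in the set-up already used for Proposition \ref{trditev2}: choose $W_j\Subset W_j'\Subset V_j$ with $\overline V\subset\bigcup_j W_j$, and, using Lemma \ref{lematehnicna}, slightly modify each $h_j$ on the handoff region $V_j\setminus W_j'$ so that near every corner an overlapping \emph{active} chart $k$ (having that corner interior to $W_k$) satisfies $h_k\circ z\ge h_j\circ z$ on the outside. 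Then no block is outermost at its own chart boundary, so $\psi$ is upper semicontinuous and hence plurisubharmonic on $V$, while the modifications --- confined to the handoff regions and performed in a fixed order of the charts --- are arranged to keep the zero set equal to $\Lambda$ and to preserve positivity on $V\setminus\Lambda$. (When $X$ is a manifold one can avoid the corners entirely by passing, as in the remark following Proposition \ref{trditev2}, to a closed $\mathcal{C}^2$ extension $\widehat A$ of $A$ across $bA$ and using a single convex defining function there.)
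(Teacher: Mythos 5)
Your proposal is correct and takes essentially the same route as the paper: the paper also builds its blocks by pulling back the convex defining functions through the holomorphic coordinate $z$ (truncating at $0$ by a convex increasing $\chi$, the smooth version of your $\max(\cdot,0)$), and it resolves precisely the chart-handoff problem you single out by the same tool, Lemma \ref{lematehnicna}, enlarging $\Gamma_j$ to $\Gamma_j'$ so that each modified block vanishes identically near its own chart boundary inside a thin neighborhood of $A\cup K$. The only cosmetic difference is that the paper then extends each block by zero across $bV_j'$ and takes the sum $\psi=\sum_j\widetilde{\psi}_j$ rather than a maximum; with the blocks arranged to vanish (rather than merely be dominated) at the exit boundary, both gluings work.
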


\begin{proof}
Recall that for $j\in\{1,\ldots , m\}$ the map $Z_j=(z,w)\colon U_j\to U_j'$ is a holomorphic embedding, $\Gamma_j=\{h_j \leq 0\}$ is a convex set where $h_j$ is a $\mathcal{C}^2$ strictly convex function, and $g_j\colon \Delta^{n} \to\Delta^{n_j}$ is a $\mathcal{C}^2$ map which is holomorphic in the interior of $\Gamma_j$. Moreover, they satisfy the properties (\ref{L1}), (\ref{L3}) and (\ref{L4}) listed in the beginning of this section.

By Lemma \ref{lematehnicna}, applied with 
$L=z(bV_j'\cap bA)$ and $U_L=z(U_j\backslash \overline{V}_j)$, there exists a $\mathcal{C}^2$ strictly convex function $\widetilde{h}_j\leq h_j$ such that $\{\widetilde{h}_j\leq 0\}\supset \Gamma_j$ and $h=\widetilde{h}$ on $z(V_j)$. We denote by $\Gamma_j'$ the component of $\{\widetilde{h}_j\leq 0\}$ which contains $\Gamma_j$;
then $\Gamma_j'\cap z(V_j)=\Gamma_j\cap z(V_j)$.

As in the proof of Proposition \ref{trditev} we can construct a $\mathcal{C}^2$-function $\psi_j\colon U_j\to\mathbb{R}_+$ (replacing the set $\Gamma_j$ by $\Gamma_j'$ in the construction). 
We may use $\widetilde{h}_j$ to obtain a $\mathcal{C}^2$-function $\psi_j'$ with 
$$\psi_j'(z,w)=\widetilde{h}_j(z),\quad (z,w)\in U_j',$$
which is independent of the variable $w$ on $U_j'$ and set $\psi_j=\psi_j' \circ Z_j$. The function $\psi_j$ is then plurisubharmonic in the neighborhood $\widetilde{U}_j$ of the set $\widetilde{\Lambda}_j=\{x\in U_j \mid z(x)\in \Gamma_j'\}$ in $U_j$,
where the set $\widetilde{U}_j$ is of the form (\ref{wU}). We also note that $\{\psi_j\leq 0\}=\widetilde{\Lambda}_j$. Moreover, we may assume $\{\psi_j= 0\}=\widetilde{\Lambda}_j$, since we can allways compose $\psi_j$ by a smooth convex increasing function $\chi$ such that $\chi(t)=0$ for $t\leq 0$ and $\chi(t)>0$ for $t>0$. 
By choosing the set $\widetilde{U}_j$ small enough, we can insure that the closures of the sets $\widetilde{U}_j\backslash (\widetilde{\Lambda}_j \cup V_j')$ and $V_j'$ are disjoint.
Therefore, we can extend the restriction $\psi_j |_{V_j'\cap \widetilde{U}_j}$ by $0$ along 
$\widetilde{U}_{j} \backslash V_j'$ and outside of $U_j$.
We obtain a nonnegative plurisubharmonic function $\widetilde{\psi}_j$ that 
vanishes on $\Lambda_{j}\subset \widetilde{\Lambda}_j$ (of the form (\ref{lamj})) and outside of the set $U_j$. Since $U_j \cap A \subset\Lambda_j$ and $K$ does not intersect 
$U_j \backslash \Lambda_j$, we see that the zero set of $\widetilde{\psi}_j$ contains $A \cup K$. Further as we have $\widetilde{\Lambda}_j \cap V_j = \Lambda_j \cap V_j$ and as $\widetilde{\psi}_j$ agrees with $\psi_j$ on $V_j$, it follows that $\widetilde{\psi}_j$ is positive on $(\widetilde{U}_j \cap V_j) \backslash \Lambda_j$.

Choose a neighborhood $V$ of $A\cup K$ such that $V \cap U_j \subset \widetilde{U}_j$ and set 
$$\psi=\sum_{j=1} ^{m}\widetilde{\psi}_j \colon V \to \mathbb{R}_+.$$ 
This function is plurisubharmonic and it vanishes on $A \cup K$; it remains to show that it is positive on $V\backslash\Lambda$. To see this, let $\Lambda_j'$ be the set of the form (\ref{lamj'}) and we observe that  
\[
(V \cap V_j)\backslash \Lambda_j'=\bigl((V \cap V_j)\backslash \Lambda_j \bigr)\cup 
 \bigl( (V\cap V_j)\cap \bigcup_{k\neq j}(V\cap V_k) \backslash \Lambda_k \bigr).
\]
Since every $\widetilde{\psi}_j$ $(j \in \{1,\ldots,m\})$ is positive on $(V \cap V_j) \backslash \Lambda_j$, we see that $\psi$ is positive on every set $(V \cap V_j)\backslash \Lambda_j'$ and hence also on $V\backslash \Lambda$.
\end{proof}

\begin{proof}[Proof of Theorem \ref{izrekg}]
Choose the set $V$ as in Lemma \ref{dodatek}. Given an open 
neighborhood $U$ of the set $A\cup K$ with $\overline U\subset V$, 
we must find an open Stein neighborhood of $A\cup K$ contained in $U$.

Using Lemma \ref{lemaspmeja} we get an open set $W\subset X$ 
and a strictly plurisubharmonic $\mathcal{C}^2$-function 
$\rho\colon \overline{W}\to\mathbb{R}$ such that 
$A\cup K\subset W\Subset U$, $\rho|_{bW}>0$, and $\rho |_K<0$. 
We can assume that $i\partial\bar{\partial}\rho\geq c>0$ on $\overline{W}$ for some constant $c>0$.

Let $v_{\delta}\colon V_{\delta}\to\mathbb{R}$ for $\delta\in [0,1]$ be the family of functions furnished by Lemma \ref{trditev2}. Thus $i\partial\bar{\partial}v_\delta >M$ on the set $\Lambda\subset V_{\delta}$ of the form (\ref{lam}) for some constant $M>-\infty$.
As $\delta$ decreases to $0$, the functions $v_{\delta}$ for $\delta \in (0,1]$ decrease monotonically to the function $v_0$ and $\{v_0=-\infty\}=A$. By subtracting a constant from all $v_{\delta}$ we can achieve that $v_{\delta}\leq v_1<0$ on $K$ for every $\delta\in [0,1]$. Set 
\[
\rho_{\varepsilon,\delta}=\rho + \varepsilon v_{\delta}\colon \overline{W} \cap V_{\delta}\to \mathbb{R}.
\]
We take a sufficiently small $\varepsilon >0$ such that $\rho_{\varepsilon,\delta}\geq \rho_{\epsilon,0}>0$ on $bW\cap \Lambda$ for all $\delta \in (0,1]$, and such that $c+\varepsilon M>0$. Since $i\partial\bar{\partial}v_\delta >M$ on $\Lambda$, it follows that $\rho_{\varepsilon,\delta}$ is strictly plurisubharmonic function on $\Lambda\cap\overline{W}$ for all $\delta \in (0,1]$. 
Let $M'<M$ be a slightly smaller constant such that $c+\varepsilon M'>0$ still holds. By continuity we have $i\partial\bar{\partial}v_\delta >M'$ on some neighborhood of $\Lambda$. Hence, after shrinking $V_{\delta}$ around $\Lambda$, we may assume that $\rho_{\epsilon,\delta}$ is strictly plurisubharmonic also on a neighborhood $V_{\delta}\cap \overline{W}$ of the set $\Lambda\cap\overline{W}$ in $\overline{W}$.

We now fix $\varepsilon$ and choose $\delta>0$ small enough in order to make $\rho_{\varepsilon,\delta}$ negative on $A$. Therefore $A \cup K$ is contained inside the zero sublevel set of $\rho_{\varepsilon,\delta}$ 
(see Figure \ref{slika4}). 
Note that $\rho_{\epsilon,\delta}<0$ on $K$, since $\rho$ and $v_{\delta}$ are both negative on $K$. We 
keep $\varepsilon$ and $\delta$ fixed for the rest of the proof.

\begin{figure}[ht]
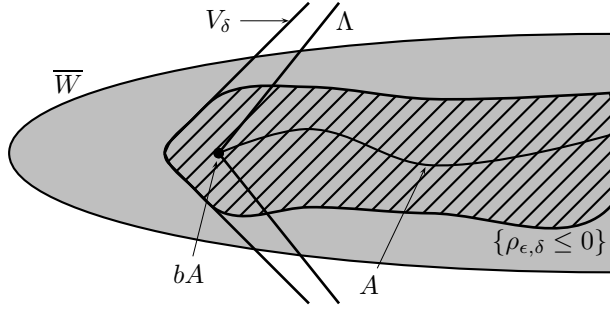

\psset{unit=0.8 cm}

\pspicture*(1,-1)(11,5)
\psellipse[fillstyle=solid,fillcolor=lightgray](11,2)(10,2)
\psline[linewidth=1pt,linearc=.15]{*-}(4.5,2)(6.5,4.5)
\psline[linewidth=1pt,linearc=.15]{*-}(4.5,2)(6.5,-0.5)
\psline[linewidth=1pt,linearc=.25]{-}(6,4.5)(3.5,2)(6,-0.5)
\pscurve[showpoints=false]{-}(4.5,2)(6,2.4)(8,1.8)(11,2.3)
\pscustom[linewidth=1pt,fillstyle=hlines]{
\pscurve[showpoints=false]{-}(3.6,2)(3.7,2.2)(4,2.5)(4.6,3)(6,3.1)(8,2.9)(11,3)
\pscurve[showpoints=false]{-}(11,1.1)(8,1)(6,1.1)(4.6,1)(4,1.5)(3.7,1.8)(3.6,2)}
\rput(2,3.2){$\overline{W}$}
\rput(7,-0.2){\rnode{g}{$A$}}
\rput(8,1.9){\rnode{h}{}}
\ncline[nodesep=3pt,  linewidth=0.3pt]{->}{g}{h}
\rput(4,0){\rnode{e}{$bA$}}
\rput(4.5,2){\rnode{f}{}}
\ncline[nodesep=3pt, linewidth=0.3pt]{->}{e}{f}
\rput(4.5,4.2){\rnode{a}{$V_{\delta}$}}
\rput(5.8,4.2){\rnode{b}{}}
\ncline[nodesep=3pt, linewidth=0.3pt]{->}{a}{b}
\rput(6.6,4.2){\rnode{c}{$\Lambda$}}
\rput(10,0.5){$\{\rho_{\epsilon,\delta}\leq 0\}$}
\endpspicture
\caption{The zero sublevel set of $\rho_{\varepsilon,\delta}$}
\label{slika4}
\end{figure}

By adding a fast growing plurisubharmonic function $\widetilde{\psi}$ described in the next 
paragraph, we shall  round off the set $\{\rho_{\varepsilon,\delta}\leq 0\}$ sufficiently close to $bA$. Indeed, we want the zero sublevel set $\{\rho_{\varepsilon,\delta}+\widetilde{\psi} <0\}$ 
to be a relatively compact subset included inside the region of plurisubharmonity of the function $\rho_{\varepsilon,\delta}+\widetilde{\psi}$. In order to keep $A \cup K$ inside the zero sublevel set, $\widetilde{\psi}$ will have to be zero on $A\cup K$.

Let $\psi$ be the function furnished by Lemma \ref{dodatek}; thus $\psi$ is nonnegative plurisubharmonic on $V$, 
it vanishes on $A\cup K$, and it is positive on $V\backslash\Lambda \supset \overline{W}\backslash \Lambda$. 
Choose an open set $W'\subset W$ such that $W'\Subset V_{\delta}$ and $bW\cap\Lambda=bW'\cap\Lambda$.
As $\rho_{\epsilon,\delta}>0$ on $bW'\cap\Lambda=bW\cap\Lambda$, there is a neighborhood $\widetilde{V}\subset \overline{W'}$ of $bW'\cap \Lambda$ such that $\rho_{\epsilon,\delta}>0$ on $\widetilde{V}$. 
As the closure of a relatively compact set $bW'\backslash \widetilde{V}$ is disjoint from $\Lambda$ and $\psi$ is positive on $V\backslash\Lambda \supset \overline{W'}\backslash\Lambda$, there is a constant $m_1>0$ such that $\psi>m_1$ on $bW'\backslash \widetilde{V}$. Let $M_1$ be another constant such that 
$\rho_{\varepsilon,\delta}<M_1$ on the set $bW'\backslash \widetilde{V}$.
Choose a sufficiently large constant $C>0$ such that $Cm_1+M_1>0$
and set $$ \widetilde{\psi}(x)=C\cdot \psi(x), \quad x \in V.$$
The choice of $C$ implies that 
$\rho_{\varepsilon,\delta}(x) +\widetilde{\psi}(x)>0$ for $x\in bW'$. 
Therefore the set 
\[
	\omega=\{x\in W'\mid \rho_{\varepsilon,\delta}(x) +\widetilde{\psi}(x)< 0\}\Subset W'
\]
lies inside the region where $\rho_{\varepsilon,\delta}+\widetilde{\psi}$ is strictly plurisubharmonic
(see Figure \ref{slika5}). Since $\widetilde{\psi}$ vanishes on $A\cup K$ and $\rho_{\varepsilon,\delta}<0$
on $A\cup K$, we have $A\cup K\subset \omega$.

\begin{figure}[ht]
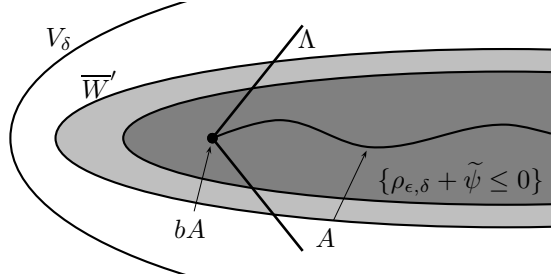

\psset{unit=0.6 cm}

\pspicture*(0,-1)(12,5)
\psellipse[fillstyle=solid,fillcolor=lightgray](11,2)(10,2)
\psellipse[fillstyle=solid,fillcolor=gray](11,2)(8.5,1.5)
\psline[linewidth=1pt,linearc=.15]{*-}(4.5,2)(6.5,4.5)
\psline[linewidth=1pt,linearc=.15]{*-}(4.5,2)(6.5,-0.5)
\psellipse(11,2)(11,4)
\pscurve[showpoints=false]{-}(4.5,2)(6,2.4)(8,1.8)(11,2.3)(12,2.1)
\rput(2,3.2){$\overline{W}'$}
\rput(7,-0.2){\rnode{g}{$A$}}
\rput(8,1.9){\rnode{h}{}}
\ncline[nodesep=3pt,  linewidth=0.3pt]{->}{g}{h}
\rput(4,0){\rnode{e}{$bA$}}
\rput(4.5,2){\rnode{f}{}}
\ncline[nodesep=3pt, linewidth=0.3pt]{->}{e}{f}
\rput(1.1,4.2){$V_{\delta}$}
\ncline[nodesep=3pt, linewidth=0.3pt]{->}{a}{b}
\rput(6.6,4.2){\rnode{c}{$\Lambda$}}
\rput(10,1.1){$\{\rho_{\epsilon,\delta}+\widetilde{\psi}\leq 0\}$}
\endpspicture
\caption{The zero sublevel set of $\rho_{\varepsilon,\delta}+\widetilde{\psi}$}
\label{slika5}
\end{figure}

Finally, as $\rho_{\varepsilon,\delta} +\widetilde{\psi}$ is strictly plurisubharmonic bounded exhaustion function on $\omega$, the set $\omega$ is Stein by Narasimhan's theorem \cite{Narasimhan} (see also \cite{lit9,Narasimhan2}).

This completes the proof of Theorem \ref{izrekg}.
\end{proof}

\section{Tubular Stein neighborhoods}\label{TS}

Let $D$ be a relatively compact domain with $\mathcal{C}^2$-boundary 
in a complex manifold $S$. Given an integer $r\geq 0$ and a complex manifold $X$, 
we denote by $\mathcal{C}^r(\overline{D},X)$ the set of all 
$\mathcal{C}^r$-maps $\overline{D} \to X$ and by 
\[
	\mathcal{A}^r(D,X) = \{f\in \mathcal{C}^r(\overline{D},X) \colon
	f|_D \in \mathcal{O}(D,X)\}
\]
the set of all $\mathcal{C}^r$-maps $\overline{D} \to X$ that are holomorphic on $D$.

A complex vector bundle $E\to\overline{D}$ is said to be 
of class $\mathcal{A}^r(D)$, if it is 
of class $\mathcal{C}^r$ over $\overline{D}$ and holomorphic 
over the interior $D=\overline{D}\backslash bD$.
We shall identify the base $\overline D$ with the zero section
of the total space $E$.

The following is the first main result of this section.

\begin{trditev}\label{trditev22}
Let $D\Subset S$ be a relatively compact domain 
with $\mathcal{C}^2$-boundary  in a Stein manifold $S$,
and let $X$ be a complex manifold.
Assume that $f\colon\overline{D}\hookrightarrow X$ is a 
$\mathcal{C}^r$-embedding $(r\geq 2)$ that is holomorphic in $D$, 
and let $\nu$ denote the (complex) normal bundle 
of $A=f(\overline D)$ in $X$. Then there exist
\begin{enumerate}
\item an open neighborhood $U_{\overline{D}}$ of $\overline{D}$ in $S$, 
\item a holomorphic vector bundle 
$\pi \colon \theta\to U_{\overline{D}}$, 
\item 
a $\mathcal{C}^{r-1}$-isomorphism of complex vector bundles
$\tau \colon f^* \nu \to \theta|_{\overline{D}}$ over $\overline D$ that is holomorphic over $D$, (By $\theta|_{\overline{D}}$ we denoted the restricion of $\theta$ to $\overline{D}$.) 
\item an open neighborhood $\Omega\subset\theta$, with convex fibers, 
of the zero section $\overline D$ (of the restricted bundle 
$\theta|_{\overline{D}}$), and 
\item a $\mathcal{C}^{r}$-diffeomorphism $F\colon \Omega \to F(\Omega) \subset X$ 
that is holomorphic on $\pi^{-1}(D)\cap \Omega$ and such that $F(\overline D)=A$.
(Here $\overline D$ is again the zero section of $\theta|_{\overline{D}}$.)
\end{enumerate}
\end{trditev}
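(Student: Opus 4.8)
The plan is to split the statement into two independent tasks: constructing the holomorphic bundle $\theta$ together with the comparison isomorphism $\tau$ (items (1)--(3)), and then building the fiberwise tubular map $F$ (items (4)--(5)). To set up, note that since $f$ is a $\mathcal{C}^r$-embedding that is holomorphic on $D$, the image $A=f(\overline D)$ is a $\mathcal{C}^r$ submanifold-with-boundary of $X$ whose interior $f(D)$ is a complex submanifold of complex codimension $m=\dim X-\dim S$. Because $df_z$ is $\mathbb{C}$-linear for $z\in D$, the pullback $E:=f^*\nu=(f^*TX)/df(TS)$ is a rank-$m$ complex vector bundle over $\overline D$ that is $\mathcal{C}^{r-1}$ and holomorphic over $D$; in the terminology fixed before the proposition, $E$ is a bundle of class $\mathcal{A}^{r-1}(D)$.

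For the first task I would fix a relatively compact Stein neighborhood $U_{\overline D}\Subset S$ of $\overline D$, which exists because $S$ is Stein. Trivializing $E$ over a finite cover of $\overline D$ by coordinate balls produces a $GL_m(\mathbb{C})$-valued transition cocycle of class $\mathcal{A}^{r-1}(D)$. Since $U_{\overline D}$ is Stein, an Oka--Grauert-type argument (cf. \cite{lit5,lit6}) realizes the underlying topological bundle by a genuine holomorphic bundle $\pi\colon\theta\to U_{\overline D}$; the point of allowing a possibly nontrivial $\theta$, rather than the trivial bundle used for curves in \cite{lit2}, is precisely that $\nu$ may be nontrivial. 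To obtain $\tau$ I would start from a smooth bundle isomorphism $\sigma\colon E\to\theta|_{\overline D}$ (available since the two bundles are smoothly isomorphic) and correct it to be holomorphic over $D$ by solving a linear $\overline{\partial}$-problem for the section $\overline{\partial}\sigma$ of $\mathrm{Hom}(E,\theta|_{\overline D})$ with $\mathcal{C}^{r-1}$-control up to $bD$; equivalently, one splits the $\mathcal{A}^{r-1}$-cocycle as a holomorphic cocycle times an $\mathcal{A}^{r-1}$-coboundary, the coboundary being $\tau$. Compactness of $\overline D$ together with openness of $GL_m(\mathbb{C})$ guarantees that the resulting $\mathcal{C}^{r-1}$-section stays invertible.

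For the second task I would use $\tau$ to identify a neighborhood of the zero section of $\theta|_{\overline D}$ with one of $E=f^*\nu$, and then construct a $\mathcal{C}^r$-map $G$ from a convex-fiber neighborhood $\Omega_0$ of the zero section into $X$ with $G|_{\overline D}=f$, whose fiberwise differential along $\overline D$ is the built-in normal identification, and which is holomorphic over $\pi^{-1}(D)\cap\Omega_0$. Over the interior such a $G$ is a holomorphic normal-neighborhood (spray) map for the complex submanifold $f(D)$ with holomorphic normal bundle $\theta|_D$: it exists locally, and I would globalize it by exploiting that the total space $\theta|_{U_{\overline D}}$ is Stein, while keeping $\mathcal{C}^r$-regularity and the prescribed boundary values. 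Since $dG$ is invertible along the compact zero section $\overline D$, the inverse function theorem yields a smaller convex-fiber neighborhood $\Omega\subset\Omega_0$ on which $F:=G$ is a $\mathcal{C}^r$-diffeomorphism onto its image, holomorphic on $\pi^{-1}(D)\cap\Omega$ and satisfying $F(\overline D)=A$.

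I expect the second task to be the main obstacle. A complex submanifold of a general complex manifold need not admit a holomorphic tubular neighborhood, and local holomorphic tubular charts cannot be glued by a $\mathcal{C}^r$ partition of unity without destroying holomorphy over the interior. Reconciling the three simultaneous requirements on $F$ -- exact holomorphy over $\pi^{-1}(D)$, $\mathcal{C}^r$-smoothness up to $bD$, and fiberwise normality matching $\tau$ -- is the heart of the proof, and is where a $\overline{\partial}$- or Cousin-type correction over the Stein manifold $\theta|_{U_{\overline D}}$ (in the spirit of the spray method of \cite{lit25}) must be carried out with care.
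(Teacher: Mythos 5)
Your decomposition into two independent tasks does not survive contact with the actual difficulty, and you say so yourself: the construction of the tubular map $F$ (items (4)--(5)) is left as "the heart of the proof... where a $\overline{\partial}$- or Cousin-type correction... must be carried out with care", with no indication of how to do it. This is a genuine gap, and the paper's key idea is precisely the one missing from your outline: do \emph{not} build $\theta$ abstractly first and then try to map a neighborhood of its zero section into $X$. Instead, first invoke Theorem \ref{izrekg} to get an open Stein neighborhood $\omega$ of $A$ in $X$, and use Cartan's Theorem A on $\omega$ to produce finitely many holomorphic vector fields $v_1,\ldots,v_k$ spanning $T_xX$ at every $x\in\omega$. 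Extending $f$ to be $\overline{\partial}$-flat to order $r-1$ on $\overline{D}$ and composing the flows of these fields, one sets $F(z,t_1,\ldots,t_k)=\varphi^1_{t_1}\circ\cdots\circ\varphi^k_{t_k}\circ f(z)$, a $\mathcal{C}^r$-map on a neighborhood of $\overline{D}\times\{0\}^k$ in $S\times\mathbb{C}^k$ which is holomorphic in $t$, holomorphic in $z$ over $D$, and a submersion along the zero section. The nonlinear tubular-neighborhood problem that blocks you is thereby converted into a \emph{linear} one: inside the trivial bundle $\overline{D}\times\mathbb{C}^k$ one takes the subbundle $E_z=\{v\in\mathbb{C}^k \mid \Xi_z(v)\in T_{f(z)}A\}$ (where $\Xi_z=D_tF(z,\cdot)|_{t=0}$), splits off an $\mathcal{A}^{r-1}$-complement $\widetilde{\theta}$ using Cartan's Theorem B for $\mathcal{A}$-sheaves on strongly pseudoconvex domains (Heunemann, Leiterer), and then approximates $\widetilde{\theta}$ by a subbundle $\theta\subset U_{\overline{D}}\times\mathbb{C}^k$ that is \emph{holomorphic} over a neighborhood $U_{\overline{D}}\supset\overline{D}$, by Heunemann's approximation theorem. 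The desired diffeomorphism is then simply $F$ restricted to a neighborhood of the zero section of $\theta$ (inverse function theorem), and $\tau$ falls out of the same construction, since $\Xi$ maps $\theta|_{\overline{D}}$ isomorphically onto a complement of $TA$ in $TX|_A$, i.e.\ onto a copy of $\nu$. Note that this route makes essential use of the Stein neighborhood theorem proved earlier in the paper, which your proposal never invokes.

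There is also a flaw in your first task. Producing $\theta$ by an Oka--Grauert argument on $U_{\overline{D}}$ only matches topological types; to get the isomorphism $\tau$ of class $\mathcal{A}^{r-1}$ you propose to take a smooth isomorphism $\sigma$ and correct it by solving $\overline{\partial}u=\overline{\partial}\sigma$, claiming invertibility of the result by "compactness of $\overline{D}$ together with openness of $GL_m(\mathbb{C})$". That argument only works if $u$ is small, and nothing forces the solution $u$ to be small, since $\overline{\partial}\sigma$ is not small. Upgrading a continuous (or smooth) isomorphism of bundles over $\overline{D}$ to one holomorphic in $D$ and regular up to $bD$ is exactly the content of the Oka principle with boundary regularity for strictly pseudoconvex domains (Heunemann's theorem, cited in the paper, with a simpler proof in the Appendix of the Drinovec-Drnov\v{s}ek--Forstneri\v{c} paper); it is a theorem in its own right, not an elementary $\overline{\partial}$-correction, so even items (1)--(3) of your outline are not established as written.
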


\begin{proof}
By Theorem \ref{izrekg} the set $A=f(\overline{D})$ admits an open Stein neighborhood $\omega \subset X$. 
By Cartan's Theorem $A$ then there exist holomorphic vector fields $v_1, \ldots, v_k$ on $\omega$ that generate the 
tangent space $T_xX \approx T^{(1,0)}_x X$ at every point $x\in \omega$.
The flow $\varphi_t ^j$ of the vector field $v_j$ is well defined and holomorphic on some open relatively compact neighborhood $\omega_0$ of $A$ in $\omega$ and for sufficiently small values of $t\in\mathbb{C}$. Indeed, for any point $x\in\omega$ there is a constant $T_{x}$ such that the solution $\varphi_{t} ^j(x)$ of the ordinary differential equation $\frac{d}{dt}\varphi_{t} ^j(x)=v_j(\varphi_t ^j(x))$ with the initial condition $\varphi_0^j(x)=x$ exists for all $t\in\mathbb{C}$ with $|t|<T_{x}$. For a relatively compact neighborhood $\omega_0\Subset\omega$ of $A$ we can choose a constant $T$ such that $0<T<T_{x}$ for every $x\in\omega_0$.

We extend $f\colon \overline{D}\to X$ to a $\mathcal{C}^r$-map from a neighborhood of $\overline{D}$ in $S$ 
such that the extended map $f$ is $\overline{\partial}$-flat to order $r-1$ on $\overline{D}$ 
(i.e., $\overline{\partial}f$ and its derivatives $D^{\alpha}(\overline{\partial}f)$ for 
$|\alpha|\leq r-1$ vanish on $\overline{D}$). The map 
\[
   F(z,t_1,t_2,\ldots, t_k)=\varphi_{t_1} ^{1}\circ \cdots \varphi_{t_k} ^{k}\circ f(z)
\]
is then well defined in some neighborhood of $\overline{D}\times \{0\}^k$ in $S\times \mathbb{C}^k$. Furthermore, 
$F$ is a $\mathcal{C}^r$-map that is holomorphic in the variables $(t_1, \ldots,t_k)$ and satisfies 
$\frac{\partial F}{\partial \overline{z}}(z,t)=0$ for $z\in \overline D$. 
Clearly we have $F(z,0)=f(z)$ and 
$\frac{\partial F}{\partial t_j}(z,0))=v_j(F(z,0))=v_j(f(z))$.
As the vectors $v_1(x), \ldots, v_k(x)$ generate $T_x X$ for all $x \in \omega$, 
$F$ is a submersion at every point of $\overline{D}\times\{0\}^k$.

For every $z\in\overline D$ we denote by $\Xi_z$ the partial diferential 
$D_t F(z,t)|_{t=0}  \colon T_0\mathbb{C}^k\approx \mathbb{C}^k\to T_{f(z)}X$ 
of the map $t\mapsto F(z,t)$ at $t=0$. 
Let $E\subset\overline{D}\times \mathbb{C}^k$ be the complex vector subbundle 
with fibers
\begin{equation}\label{vlakno}
E_z=\{v\in\mathbb{C}^k\mid \Xi_z(v)\in T_{f(z)}A\}, \quad z\in \overline{D}.
\end{equation}
Note that $E$ is of class $\mathcal{A}^{r-1}(D)$ since the tangent bundle 
$TA$ of $A$ is of this class.

We claim that $E$ is complemented, in the sense that there exists a complex vector subbundle 
$\widetilde \theta \subset \overline{D}\times \mathbb{C}^k$ of class $\mathcal{A}^{r-1}(\overline{D})$ 
such that $E \oplus \widetilde \theta = \overline{D}\times \mathbb{C}^k$.  
To see this, consider the short exact sequence of $\mathcal{A}^{r-1}(D)$-vector bundles 
\begin{equation}\label{ezo}
	0 \to E \hookrightarrow \overline{D}\times \mathbb{C}^k \stackrel{q}{\rightarrow} \widetilde E \to 0,
\end{equation}
where $E\hookrightarrow \overline{D}\times \mathbb{C}^k$ is an inclusion, 
$\widetilde E=(\overline{D}\times \mathbb{C}^k)\slash E$ 
is the quotient bundle, and $q$ is the  quotient projection. 
Next, consider the associated short exact sequence of locally free sheaves of 
$\mathcal{A}^{r-1}(D)$-sections 
\begin{equation}\label{ezo2} 
	0 \to \mathcal{S}(E) \to \mathcal{S}(\overline{D}\times \mathbb{C}^k) \to \mathcal{S}(\widetilde E) \to 0,
\end{equation}
where we denoted by $\mathcal{S}(E)$, $\mathcal{S}(\overline{D}\times \mathbb{C}^k)$ and $\mathcal{S}(\widetilde{E})$ respectively the sheaves of $\mathcal{A}^{r-1}(D)$-sections of bundles $E$, $\overline{D}\times \mathbb{C}^k$ and $\widetilde{E}$. For the details of the equivalence between the category of vector bundles and the category of locally free sheaves (of sections) we refer to \cite{lit4} and \cite{lit21}.

Applying Cartan's Theorem B for locally free sheaves of class $\mathcal{A}^0(D)$
over a strongly pseudoconvex domain (see \cite{lit20,lit19}), 
we can follow the usual proof for locally free sheaves on a Stein manifold 
(see e.g.\ \cite[VIII/7.Theorem]{lit21}) 
to obtain a direct sum splitting 
$\mathcal{S}(\widetilde{E}) \oplus \mathcal{S}(E)=\mathcal{S}(\overline{D}\times \mathbb{C}^k)$ 
of the above exact sequence. 
Indeed, the exactness of the sequence (\ref{ezo2}) implies the exactness of
\[
	0\to\Hom\bigl(\mathcal{S}(\widetilde{E}\bigr),\mathcal{S}(E))
	\stackrel{\alpha}{\rightarrow}\Hom\bigl(\mathcal{S}(\widetilde{E}),\mathcal{S}(\overline{D}
	\times \mathbb{C}^k)\bigr)\stackrel{\beta}{\rightarrow}
	\Hom\bigl(\mathcal{S}(\widetilde{E}),\mathcal{S}(\widetilde{E})\bigr)\to 0.
\]
As $H^1(\overline{D},\Hom(\mathcal{S}(\widetilde{E}),\mathcal{S}(E)))=0$ by the version of Cartan's theorem B discussed in the beginning of the paragraph, we see that the map 
\[
    H^0\bigl(\overline{D},\Hom\bigl(\mathcal{S}(\widetilde{E}),\mathcal{S}(\overline{D}\times \mathbb{C}^k)\bigr)\bigr)\stackrel{\overline{\beta}}{\rightarrow} H^0\bigl(\overline{D},\Hom\bigl(\mathcal{S}(\widetilde{E}),\mathcal{S}(\widetilde{E})\bigr)\bigr)
\]
is surjective. Hence there exists 
$h\in H^0\bigl(\overline{D},\Hom\bigl(\mathcal{S}(\widetilde{E}),\mathcal{S}(\overline{D}\times \mathbb{C}^k)\bigl)\bigl)$ such that $\overline{\beta}(h)=\beta \circ h$ equals the identity. Then 
$h\bigl(\mathcal{S}(\widetilde E)\bigr)$ gives us the corresponding vector subbundle $\widetilde{\theta}$ of $\overline D\times\C^k$, and it is 
immediate that $\overline D\times\C^k\approx E\oplus \widetilde \theta$
as an $\mathcal{A}^{r-1}(D)$-isomorphism.

By the result of Heunemann (see \cite[Theorem 1]{lit22}) we can approximate the subbundle 
$\widetilde{\theta} \subset \overline D \times\C^k$
uniformly on $\overline{D}$ by a complex vector subbundle $\theta\subset U_{\overline{D}}\times \mathbb{C}^k$ that is holomorphic over an open set $U_{\overline{D}}\supset \overline{D}$ in $S$. 
More precisely, there exists a continuous complex vector bundle automorphism $\tau'$ of the trivial bundle $\overline{D}\times \mathbb{C}^k$ that is close to the identity over $\overline{D}$, holomorphic over $D$, 
and such that $\theta|_{\overline{D}}=\tau'(\widetilde{\theta})$. In addition, the morphism $\tau'$ can be chosen as close to the identity morphism as desired, and hence we may assume that $\overline{D}\times \mathbb{C}^k=E \oplus \theta|_{\overline{D}}$. For a simpler proof of this result see 
the Appendix in \cite{lit2} and note that it 
additionally gives us an automorphism of class $\mathcal{A}^{r-1}(D)$.

Since $E_z$ must contain the kernel of the map $\Xi_z$ for any $z\in \overline{D}$ (see (\ref{vlakno})), the restriction $\Xi |_{\widetilde{\theta}} \colon \widetilde{\theta} \to \Xi(\widetilde{\theta}) \subset TX|_A$ is an injective vector bundle map and 
\[
   \Xi(\widetilde{\theta}) \oplus TA=TX|_A.
\]
If $\theta$ is a sufficiently good approximation of the bundle $\widetilde{\theta}$, it follows that the restriction $\Xi_z|_{\theta_z} \colon \theta_z \to \Xi(\theta_z)$ is also an isomorphism for every $z \in \overline{D}$ and $\Xi(\theta) \oplus TA=TX|_A.$ Thus the normal bundle $\nu$ of $A$ in $X$ is $\mathcal{A}^{r-1}(D)$-isomorphic to the restricted bundles $\widetilde{\theta}|_{\overline{D}}$ and $\theta_{\overline{D}}$. Moreover, there exists a complex vector bundle isomorphism 
$\tau \colon f^* \nu \to \theta|_{\overline{D}}$ of class $\mathcal{A}^{r-1}(D)$.

Remember that $F(z,0)=f(z)$ is a $\mathcal{C}^r$-embedding.
By the inverse mapping theorem there is an open neighborhood $\Omega$ in $\theta$ of the zero section
$\overline{D}$ in $\theta|_{\overline{D}}$ and 
such that $F$ maps $\Omega$ $\mathcal{C}^r$-diffeomorphically onto an open 
neighborhood $F(\Omega)$ of $A$ in $X$, and $F$ is biholomorphic on 
$\Omega \cap \pi^{-1}(D)$. By shrinking $\Omega$ we may insure further that 
$\Omega$ has convex fibers and that $F(\Omega)\subset \omega$.

On the fiber $\{z\} \times \mathbb{C}^k$ of the bundle $\overline{D} \times \mathbb{C}^k$ we have a unique decomposition $t=t'\oplus t''\in E \oplus \theta$. Note that the partial diferential $D_{t''}F_{(z,0)}$ is also an isomorphism for $z \in \overline{D}$ and hence $F$ is fiberwise biholomorphic on $\Omega \cap \pi^{-1}(D)$.

This completes the proof of Proposition \ref{trditev22}.
\end{proof}

\begin{remark}
In connection to Proposition \ref{trditev22} (check also Theorem \ref{izrek2}) we mention Theorem $1.1.$ in \cite{lit25} which states the following: If $h\colon X \to S$ is a holomorphic submersion and $f\colon \overline{D}\to X$ is a continuous section that is holomorphic in $D$, then 
there exists a holomorphic vector bundle $\xi \to U_{\overline{D}}$ over an open neighborhood $U_{\overline{D}}$ of $\overline{D}$ in $S$,  and for every open set $\widetilde{\Omega} \supset f(\overline{D})$ in $X$ there exists an open Stein set $\Omega \subset \widetilde{\Omega}$ containing $f(\overline{D})$ and a fiber-preserving biholomorphic map from $\Omega$ onto an open subset with convex fibers in $\xi$. This result is proved by the method of holomorphic sprays developed in \cite{lit11} and \cite{lit2}.
\qed \end{remark}

Let us now recall the definitions and establish the notation considering norms on $L^{\infty}$-spaces on manifolds.
Let $W\Subset X$ be an open relatively compact subset in a complex manifold $X$. For $N \in \mathbb{N}$ and a map 
$f\in \mathcal{C}^0(\overline{W},\mathbb{C}^N)$ we set 
\[
\|f\|_{L^{\infty}(W)}=\sup_{z\in W} |f(z)|,
\]
where $|f(z)|$ denotes the Euclidean norm on the space $\mathbb{C}^N$. Let now $\alpha$ be a $(0,1)$-form on $\overline{W}$ with coefficients of class $\mathcal{C}^1(\overline{W},\mathbb{C}^N)$. In the case $X=\mathbb{C}^n$ we define  
\[
\|\alpha\|_{L_{0,1}^{\infty}(W)}=\sup_{z\in W} \sum_j |\alpha_j(z)|,
\]
where $\alpha(z)=\sum_j \alpha_{j}(z)d\overline{z}_j$ with $\alpha_i\in \mathcal{C}^1(\overline{W},\mathbb{C}^N)$. In general we choose an open cover $V_1, \ldots V_m$ of $\overline{W}$ and such that each $V_j$ is a relatively compact subset in some holomorphic coordinate patch in $X$. As one can compute $\|\alpha\|_{L^{\infty}(W\cap V_j)}$ for $j\in \{1,\ldots,m\}$ in the coordinate patch in $\mathbb{C}^n$, we  set a norm
\[
\|\alpha\|_{L_{0,1}^{\infty}(W)}=\sum_{j=1}^m \|\alpha\|_{L_{0,1}^{\infty}(W\cap V_j)}.
\]
Note that any other cover yields an equivalent norm.

Another way to define this norm is to choose a locally finite open cover $\{U_j\}_j$ of $X$ and a partition of unity  $\{\vartheta_j\}_j$ subordinate to this cover. We set $\|\alpha\|_{L_{0,1}^{\infty}(W)}=\sup_{z \in W}\sum_j \vartheta_j(z)|\alpha_j(z)|$, where $\alpha(z)=\sum_j \alpha_{j}(z)d\overline{z}_j$ is a $(0,1)$-form with respect to coordinates $z$ in $U_j$. Since $\overline{W}$ is compact, different choices of covers and partitions of unity give equivalent norms. In a similar fashion we define the norm for any differential form.

The following local result is a modification of Lemma 3.2. in \cite{lit24} and for the sake of completeness we shall rewrite its proof.

\begin{lemma}\label{tl}
Let $\epsilon\mathbb{B}$ be the ball in $\mathbb{C}^n$ centered at $0$ and with radius $\epsilon$, and let $v\in\mathcal{C}^{r+1}(\epsilon \mathbb{B})$ $(r\in\mathbb{N})$. Then there is a constant $c<\infty$ such that
\[
|D^{\alpha}v(0)|\leq c\, \bigl(\epsilon||D^{\alpha}\bar{\partial}v||_{L^{\infty}(\epsilon \mathbb{B})}+\epsilon^{-|\alpha|}||v||_{L^{\infty}(\epsilon \mathbb{B})}\bigr), \qquad |\alpha|\leq r.
\]
\end{lemma}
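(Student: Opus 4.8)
The plan is to first remove the parameter $\epsilon$ by a scaling argument, reducing everything to the unit ball $\mathbb{B}$, and then to prove the resulting estimate by an explicit integral representation combined with integration by parts.

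For the reduction, I would set $w(\zeta)=v(\epsilon\zeta)$ for $\zeta\in\mathbb{B}$. A direct computation gives $D^\alpha w(0)=\epsilon^{|\alpha|}D^\alpha v(0)$, $D^\alpha\dibar w(\zeta)=\epsilon^{|\alpha|+1}(D^\alpha\dibar v)(\epsilon\zeta)$, and $\|w\|_{L^\infty(\mathbb{B})}=\|v\|_{L^\infty(\epsilon\mathbb{B})}$, so that $\|D^\alpha\dibar w\|_{L^\infty(\mathbb{B})}=\epsilon^{|\alpha|+1}\|D^\alpha\dibar v\|_{L^\infty(\epsilon\mathbb{B})}$. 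Hence the claimed inequality is precisely the unit-ball inequality
\[
|D^\alpha w(0)|\le c\bigl(\|D^\alpha\dibar w\|_{L^\infty(\mathbb{B})}+\|w\|_{L^\infty(\mathbb{B})}\bigr),\qquad |\alpha|\le r,
\]
for $w\in\mathcal{C}^{r+1}(\mathbb{B})$, rescaled: dividing by $\epsilon^{|\alpha|}$ turns the two right-hand terms into $\epsilon\|D^\alpha\dibar v\|$ and $\epsilon^{-|\alpha|}\|v\|$, so the powers of $\epsilon$ match automatically. It is worth noting that this matching is rigid: it forces the unit-ball estimate to involve \emph{only} the top-order norm $\|D^\alpha\dibar w\|$ and the sup-norm $\|w\|$, with no intermediate-order derivatives of $\dibar w$, since any such term would rescale with a non-positive power of $\epsilon$ and destroy the inequality as $\epsilon\to 0$.

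To prove the unit-ball estimate I would use a Cauchy--Green (Cauchy--Pompeiu) representation, or its $\C^n$ analogue the Bochner--Martinelli--Koppelman formula, applied after a cutoff. Decompose $D^\alpha$ into the monomials $\partial^\beta\dibar^\gamma$ with $|\beta|+|\gamma|=|\alpha|$; for each it suffices to bound $\partial^\beta g(0)$, where $g=\dibar^\gamma w$, i.e.\ a \emph{holomorphic} derivative of a function of class $\mathcal{C}^{r-|\gamma|+1}$. Fix a cutoff $m\in\mathcal{C}_0^\infty(\mathbb{B})$ that is $\equiv 1$ on $\frac12\mathbb{B}$ and apply the representation to $mg$, which is compactly supported; since $mg=g$ near the origin, the boundary integral drops out and differentiation at $0$ produces an integral of $\partial^\beta$ of the singular kernel against $\dibar(mg)=m\,\dibar g+g\,\dibar m$. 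In the term with $m\,\dibar g$ I would integrate by parts to transfer the $\partial^\beta$ derivatives off the kernel and onto $\dibar g$; the kernel then becomes locally integrable near $0$, and since $\partial^\beta\dibar g$ is (up to constants) a component of $D^\alpha\dibar w$, this contributes exactly the top-order term $\|D^\alpha\dibar w\|_{L^\infty(\mathbb{B})}$.

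The delicate point, and the step I expect to be the main obstacle, is to show that every remaining contribution is controlled by $\|w\|_{L^\infty(\mathbb{B})}$ \emph{alone}, so that no intermediate-order norm of $\dibar w$ survives, as the rigidity of the scaling demands. These remaining contributions are exactly the terms in which a derivative lands on the cutoff $m$. Here the choice $m\equiv 1$ near the origin is essential: each derivative of $m$ is supported in a compact annulus contained in the open ball and bounded away from the origin, \emph{away} from the singularity of the kernel, so the resulting kernels are smooth there and, vanishing near both the inner and the outer edge of the annulus, are compactly supported in its interior. I would then integrate by parts once more, transferring \emph{all} remaining derivatives from $g$ (hence from $w$ and its derivatives) onto these smooth compactly supported kernels; no boundary terms arise, and what is left is $\iint(\text{smooth})\,w\,dA$, bounded by $\|w\|_{L^\infty(\mathbb{B})}$. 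The bookkeeping of the multivariable kernel and of the repeated integrations by parts is the technical heart of the proof; in $\C^n$ one works either directly with the Bochner--Martinelli--Koppelman kernels or, more elementarily, iterates the one-variable Cauchy--Pompeiu formula in each coordinate while treating the remaining variables as parameters.
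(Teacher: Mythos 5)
Your proposal is sound and rests on the same basic device as the paper---the Bochner--Martinelli formula applied to a cutoff times the function, with the term carrying no derivative on the cutoff producing the $\|D^{\alpha}\dibar v\|$ contribution, and every term where a derivative hits the cutoff living on an annulus away from the kernel singularity and producing the $\|v\|$ contribution---but the execution is genuinely different. The paper neither rescales nor decomposes $D^{\alpha}$ into Wirtinger monomials: it uses the \emph{moving} cutoff $\chi_{\epsilon}(\zeta-z)$ (equal to $1$ for $|\zeta-z|\le\epsilon/4$, vanishing for $|\zeta-z|\ge\epsilon/2$, with $|D^{\alpha}\chi_{\epsilon}|\le C_{\alpha}\epsilon^{-|\alpha|}$), so that both integrals in the representation of $v(z)$ are convolutions; consequently $D^{\alpha}$ can be placed in one stroke on $\dibar v$ in the singular-kernel term, and on the smooth kernel $\dibar_{\zeta}\chi_{\epsilon}(\zeta-z)\wedge B(\zeta,z)$ (of size $O(\epsilon^{-2n-|\alpha|})$ on an annulus of volume $O(\epsilon^{2n})$) in the other, and both estimates drop out in two lines with no repeated integration by parts. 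Your fixed cutoff $m$ is adequate only because the estimate is asserted at the single point $0$; it buys a clean, $\epsilon$-free unit-ball inequality (and your rigidity remark correctly explains why that inequality must be free of intermediate norms), at the price of the chain of integrations by parts on the annulus---which does close up, since the annulus kernels are smooth and compactly supported in $\mathbb{B}\backslash\{0\}$ and the total number of derivatives moved never exceeds $r+1$. One small mismatch to repair: passing through the monomials $\partial^{\beta}\dibar^{\gamma}$ and reassembling at the end controls $|D^{\alpha}v(0)|$ by \emph{all} real derivatives of order $|\alpha|$ of the coefficients of $\dibar v$, i.e.\ by $\max_{|\alpha'|=|\alpha|}\|D^{\alpha'}\dibar v\|_{L^{\infty}}$, rather than by the single norm $\|D^{\alpha}\dibar v\|_{L^{\infty}}$ in the statement; this weaker form is all that is used in the proof of Theorem \ref{izrek3}, but to obtain the lemma verbatim you should drop the Wirtinger decomposition and run the same cutoff-and-integrate-by-parts argument with the real derivative $D^{\alpha}$ applied directly to the representation of $mv$, so that the good term is exactly $\int m\, D^{\alpha}\dibar v\wedge B(\cdot,0)$ and the annulus terms are handled as before.
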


For a multiindex $\alpha=(\alpha_1,\ldots, \alpha_{2n})$ we denote by $D^{\alpha}=\frac{\partial^{|\alpha|}}{\partial x_1^{\alpha_1}\ldots \partial y_{2n}^{\alpha_{2n}}}$ the partial derivative with respect to coordinates $(x_1,y_1,\ldots,x_n,y_n)$ on $\mathbb{R}^{2n}\approx \mathbb{C}^n$.

\begin{proof}
We apply the Bochner-Martinelli formula 
\[
	g(z)=\int_{\partial T}g(\zeta)B(\zeta,z)-\int_{T}\bar{\partial}g(\zeta)\land B(\zeta,z),
\]
which is valid for any open subset $T\Subset \mathbb{C}^n$ with piecewise $\mathcal{C}^1$-boundary and any  $g\in\mathcal{C}^1(\overline{T})$, where $B(\zeta,z)$ is the Bochner-Martinelli kernel 
\[
	B(\zeta,z) = c_n\sum_{j=1} ^{n}(-1)^{j-1}
	\frac{\overline{\zeta_j-z_j}}{|\zeta-z|^{2n}} d\bar{\zeta}[j] \land d\zeta.
\]

Let $\chi\colon \mathbb{R}\to [0,1]$ be a cut-off function with $\chi(t)=1$ when $|t|\leq \frac{1}{2}$ and $\chi(t)=0$ when $|t|\geq 1$. For $w\in\mathbb{C}^n$ we set $\chi_{\epsilon}(w)=\chi(\frac{2|w|}{\epsilon})$. 
Its partial derivatives satisfy 
$|D^{\alpha}\chi_{\epsilon}|\leq C_{\alpha}\epsilon^{-|\alpha|}$ for all multiindices $\alpha$, where $C_{\alpha}>0$ is a constant depending on $\alpha$.

Applying the Bochner-Martinelli formula to $g(\zeta)=\chi_{\epsilon}(\zeta-z)v(\zeta)$ on $\epsilon\mathbb{B}$ we obtain
\[
   v(z)=-\int_{\epsilon\mathbb{B}}{\bar{\partial}_{\zeta}\bigl(\chi_{\epsilon}(\zeta-z)v(\zeta)\bigr)\land B(\zeta,z)}=
\]   
\[
   =-\int_{\epsilon\mathbb{B}}{\bar{\partial}v(\zeta)\land \chi_{\epsilon}(\zeta-z)B(\zeta,z)}-\int_{\epsilon\mathbb{B}}{v(\zeta)\bar{\partial}_{\zeta}\chi_{\epsilon}(\zeta-z)\land B(\zeta,z)}.
\]  
These are convolution operators and we may differentiate on either integrand. This gives for $|\alpha|\leq r$:
\[
   D^{\alpha}v(z)=-\int_{\epsilon\mathbb{B}}{D^{\alpha}\bar{\partial}v(\zeta)\land \chi_{\epsilon}(\zeta-z)B(\zeta,z)}-\int_{\epsilon\mathbb{B}}{v(\zeta)\partial_z ^{\alpha}\bigl(\bar{\partial}\chi_{\epsilon}(\zeta-z)\land B(\zeta,z)\bigr)}=
\]   
\[
   =I_1(z)+I_2(z).
\]   
Using $|B(\zeta,z)|\leq c|\zeta-z|^{1-2n}$ for some positive constant $c$, we can estimate the integrals as follows:
\[
   |I_1(z)|\leq \int_{|\zeta-z|\leq \frac{\epsilon}{2}}|D^{\alpha}\bar{\partial}v(\zeta)||\zeta-z|^{1-2n}dV\leq c\epsilon \|D^{\alpha}\bar{\partial}v\|_{L^{\infty}(\epsilon\mathbb{B})}
\]   
and
\[
   |I_2(z)|\leq \int_{\frac{\epsilon}{4}\leq |\zeta-z|\leq \frac{\epsilon}{2}}|v(\zeta)|\epsilon^{-2n-|\alpha|}dV\leq c\epsilon^{-|\alpha|} \|v\|_{L^{\infty}(\epsilon\mathbb{B})}.
\]   
\end{proof}

To prove Theorem \ref{izrek2} we shall correct the diffeomorphism $F$,
furnished by Proposition \ref{trditev22}, 
to get a biholomorphism by solving the $\overline\partial$-equation with estimates. 
We actually prove the following more precise result that includes Theorem \ref{izrek2}.

\begin{theorem}
\label{izrek3}
Let $D\Subset S$ be a relatively compact domain with $\mathcal{C}^r$-boundary 
$(r\ge 2)$ in a Stein manifold $S$, and let 
$f\colon\overline{D}\hookrightarrow X$ be an embedding of
class $\mathcal{A}^r(D,X)$ to a complex manifold $X$.
Let $\theta$ be a holomorphic vector bundle furnished by Proposition \ref{trditev22},
and identify $\overline D$ with the zero section of $\theta|_{\overline D}$.
Then for every open neighborhood $U$ of $A=f(\overline D)$ in $X$ 
there exist an open Stein domain $\omega\subset \theta$ containing $\overline D$
and a biholomorphic map $\Phi\colon \omega\to\omega' \subset X$ 
such that $A\subset \omega' \subset U$, and such that 
$\Phi^{-1}(A)$ is the graph of an $\mathcal{A}^r$-section of the restricted bundle $\theta|_{\overline{D'}}$ 
over the closure $\overline{D'}$ of an open strongly pseudoconvex domain $D'\subset S$.
\end{theorem}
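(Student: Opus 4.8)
The plan is to start from the $\mathcal{C}^r$-diffeomorphism $F\colon\Omega\to F(\Omega)\subset X$ furnished by Proposition \ref{trditev22}, which is already holomorphic over $\pi^{-1}(D)\cap\Omega$ and satisfies $F(\overline D)=A$, and to \emph{correct} it to a genuine biholomorphism by solving a $\overline{\partial}$-equation whose right-hand side is the obstruction $\overline{\partial}F$. First I would shrink $\Omega$ so that $F(\Omega)$ lies inside an open Stein neighborhood $\omega_X\subset U$ of $A$, which exists by Theorem \ref{izrekg}. To linearize the target I embed $\omega_X$ properly as a closed complex submanifold $\iota\colon\omega_X\hookrightarrow\mathbb{C}^M$ (Remmert) and fix a holomorphic retraction $\rho\colon W\to\iota(\omega_X)$ from a neighborhood $W$ of $\iota(\omega_X)$ in $\mathbb{C}^M$ (Docquier--Grauert). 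Setting $\beta:=\overline{\partial}(\iota\circ F)$, a $\overline{\partial}$-closed $\mathbb{C}^M$-valued $(0,1)$-form of class $\mathcal{C}^{r-1}$ on $\Omega$, the crucial point inherited from the construction of $F$ is that $\beta$ vanishes to order $r-1$ along the zero section $\pi^{-1}(\overline D)$: indeed $F$ is holomorphic in the fibre variables and $\overline{\partial}$-flat to order $r-1$ in the base variables along $\overline D$.

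Next I solve $\overline{\partial}u=\beta$ with sup-norm estimates on a strongly pseudoconvex tube. Since $S$ is Stein, I fix a strongly pseudoconvex domain $D'\subset S$ modeled on $D$ (its boundary inheriting the strong pseudoconvexity of $bA=f(bD)$) and, for small $\epsilon>0$, let $\omega_\epsilon\subset\theta$ be the tube over $\overline{D'}$ of fibre-radius $\epsilon$ with respect to a fixed Hermitian metric on $\theta$; for $\epsilon$ small $\omega_\epsilon$ is a relatively compact strongly pseudoconvex domain in the total space of $\theta|_{U_{\overline D}}$, which is Stein. On such domains the $\overline{\partial}$-equation is solvable with a bounded solution operator in the sup-norm, and rescaling the fibre variable by $1/\epsilon$ to a fixed model domain shows that the operator norm can be taken \emph{uniform} in $\epsilon$. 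Because $\beta$ is flat of order $r-1$ on the zero section we get $\|\beta\|_{L^{\infty}(\omega_\epsilon)}=O(\epsilon^{\,r-1})$, whence the solution $u=u_\epsilon$ satisfies $\|u_\epsilon\|_{L^{\infty}(\omega_\epsilon)}\to 0$. Feeding this bound, together with the equally flat bounds on $D^{\alpha}\beta$, into the Bochner--Martinelli estimate of Lemma \ref{tl} applied on balls of radius comparable to $\epsilon$ centred at interior points, I upgrade the smallness to the $\mathcal{C}^1$-norm, and, tracking the powers of $\epsilon$ more carefully, to higher order, so that $\|u_\epsilon\|_{\mathcal{C}^1(\omega_\epsilon)}\to 0$.

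Now $G_\epsilon:=\iota\circ F-u_\epsilon$ is holomorphic on $\omega_\epsilon$ since $\overline{\partial}G_\epsilon=\beta-\beta=0$, and it is $\mathcal{C}^1$-close to $\iota\circ F$; for $\epsilon$ small its image lies in $W$, so
\[
   \Phi:=\iota^{-1}\circ\rho\circ G_\epsilon\colon\omega_\epsilon\longrightarrow\omega_X\subset U
\]
is holomorphic and $\mathcal{C}^1$-close to $F$. Because $F$ restricts to a diffeomorphism near the zero section, for $\epsilon$ small $\Phi$ is a local biholomorphism and, after passing to a slightly smaller relatively compact strongly pseudoconvex domain $\omega=\omega_\epsilon$, it is injective; thus $\Phi\colon\omega\to\omega':=\Phi(\omega)$ is biholomorphic with $A\subset\omega'\subset U$, and $\omega$ is Stein as a relatively compact strongly pseudoconvex domain in the Stein space $\theta|_{U_{\overline D}}$. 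Finally, since $\Phi$ is $\mathcal{C}^r$-close to $F$ and $F^{-1}(A)$ is the zero section, the set $\Phi^{-1}(A)$ is a $\mathcal{C}^r$-small perturbation of the zero section meeting each fibre transversally; arranging the flat extension and the tube base so that $\pi\bigl(\Phi^{-1}(A)\bigr)=\overline{D'}$, it is exactly the graph of an $\mathcal{A}^r$-section of $\theta|_{\overline{D'}}$, holomorphic over $D'$ because both $A\setminus bA$ and $\Phi$ are.

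\textbf{Main obstacle.} The delicate step is the $\overline{\partial}$-solution with estimates: one must produce a solution operator on the family of strongly pseudoconvex tubes $\omega_\epsilon$ whose norm stays \emph{uniformly bounded} as $\epsilon\to0$, so that the boundary-flatness of $\beta$ genuinely forces $\|u_\epsilon\|\to 0$, and then convert the resulting $L^{\infty}$-control into $\mathcal{C}^1$- (indeed $\mathcal{C}^r$-) control via Lemma \ref{tl} while tracking the exact powers of $\epsilon$. The geometry of a possibly nontrivial holomorphic bundle $\theta$ is what makes this bookkeeping the heart of the argument.
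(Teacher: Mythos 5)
Your strategy is essentially the paper's own: a Stein neighborhood of $A$ from Theorem \ref{izrekg}, a Remmert embedding plus holomorphic retraction to linearize the target, a $\overline{\partial}$-correction of $G=\psi\circ F$ with uniform sup-norm estimates on shrinking strongly pseudoconvex neighborhoods of the zero section, and interior estimates via Lemma \ref{tl}. However, your key estimate $\|\beta\|_{L^{\infty}(\omega_\epsilon)}=O(\epsilon^{r-1})$ fails for the domains you chose, and this is a genuine gap, not a technicality. The flatness of $\beta=\overline{\partial}(\psi\circ F)$ holds along $\Omega_0=\pi^{-1}(D)\cap\Omega$, the \emph{full preimage} of $D$ (not merely the zero section), so $|\beta(y)|$ is governed by the distance from the base point $\pi(y)$ to $\overline{D}$ and is insensitive to the fiber coordinate. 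Consequently, shrinking only the fiber radius over a \emph{fixed} base $\overline{D'}$ buys nothing: for your tube to be an open neighborhood of the compact zero section $\overline{D}$ you must take $D'$ open with $D'\supset\overline{D}$, and then over $D'\setminus\overline{D}$ the distance to $\Omega_0$ is bounded below independently of $\epsilon$, so $\|\beta\|_{L^{\infty}(\omega_\epsilon)}$ does not tend to $0$ at all (while taking $D'=D$ destroys openness at points over $bD$, since $f$ has no holomorphic or $\overline{\partial}$-flat extension past $bD$). What is needed is to shrink the \emph{base} at rate $O(\epsilon)$, which is exactly what the paper's domains $U_\epsilon=\{\rho(z)+M|t|^2<\epsilon\}$ in (\ref{ue}) achieve: every point of $U_\epsilon$ lies at distance $O(\epsilon)$ from $\Omega_0$, and only then does (\ref{estimates-G}) yield $\|\overline{\partial}G\|_{L^{\infty}(U_\epsilon)}=o(\epsilon^{r-1})$ as in (\ref{formula2}). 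These sublevel sets are moreover smooth strongly pseudoconvex (your tubes have corners along $bD'\times\{|t|=\epsilon\}$), and their uniform $\mathcal{C}^2$-closeness to $U_0$ is what justifies the $\epsilon$-independent constant in the $\overline{\partial}$-estimate; your fiber-rescaling argument presumes a fixed base, which, by the above, is not available.

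Two further steps are asserted rather than proved. First, $A\subset\Phi(\omega)$: since the domains shrink with $\epsilon$, surjectivity onto a neighborhood of $A$ does not follow from local biholomorphy and injectivity; the paper proves it by topological degree, checking that $\|G_{\epsilon,s}-G\|_{L^{\infty}(U_\epsilon)}/\epsilon\to 0$, so that the homotopy $G_{\epsilon,s}$ never maps $bU_{\epsilon/2}$ into $G(\overline{U}_0)$ (see (\ref{boundary})), whence $G_\epsilon$ has degree $1$ at every point of $G(\overline{U}_0)\supset\psi(A)$ and therefore $G_\epsilon(U_{\epsilon/2})\supset\psi(A)$. You would need this, or a Rouch\'e-type substitute comparing $\|u_\epsilon\|_{L^\infty}=o(\epsilon)$ with the distance from $\psi(A)$ to $G(bU_{\epsilon/2})$, which is of size comparable to $\epsilon$. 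Second, your concluding claim that one can arrange $\pi\bigl(\Phi^{-1}(A)\bigr)=\overline{D'}$ for a domain $D'$ fixed in advance is backwards: in the paper $D'$ is produced \emph{a posteriori} as the projection of $A_\epsilon=\Phi^{-1}(A)$, which for small $\epsilon$ is a $\mathcal{C}^1$-small perturbation of the zero section and hence projects onto some strongly pseudoconvex domain $D_\epsilon$ close to $D$; it cannot be prescribed beforehand. Finally, note that Lemma \ref{tl} gives $\mathcal{C}^1$-smallness of $u_\epsilon$ only on the smaller set $U_{\epsilon/2}$ (one needs room for the Bochner--Martinelli balls), not on the full domain where the $\overline{\partial}$-equation was solved.
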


\begin{proof}
Choose an open set $U\subset X$ containing the image $A=f(\overline D)$.
By Theorem \ref{izrekg} there is an open Stein domain $\omega$ 
in $X$ with $A\subset \omega \subset U$.

Proposition \ref{trditev22} furnishes a holomorphic vector bundle 
$\pi \colon \theta\to U_{\overline{D}}$ 
over a neighborhood $U_{\overline{D}}$ of $\overline{D}$ in $S$,
an open neighborhood $\Omega\subset \theta$ of the zero section 
$\overline D$ of the restricted bundle $\theta_{\overline{D}}$,
and a $\mathcal{C}^{r}$-diffeomorphism $F \colon \Omega \to F(\Omega) \subset \omega$ 
such that $F$ is holomorphic on the set 
\[
	\Omega_0:=\pi^{-1}(D) \cap \Omega \subset \theta.
\]
Recall that $\theta$ is a holomorphic subbundle of a trivial vector bundle 
$U_{\overline{D}} \times \mathbb{C}^k$ for some 
$k \in \mathbb{N}$, and hence we can identify each element of $\theta$ with
a unique point $(z,t)\in U_{\overline{D}} \times \mathbb{C}^k$.

Our goal is to approximate the diffeomorphism $F$ sufficiently well 
in a neighborhood of $\overline D$ by biholomorphic maps.
Since $F(\overline D)=A$ admits a Stein neighborhood $\omega$ in $X$,
we can reduce this nonlinear approximation problem to the linear problem
(for functions) by the standard embedding-retraction method.
Choose a proper holomorphic embedding $\psi\colon \omega \to\C^N$
onto a closed complex (Stein) submanifold $\Sigma=\psi(\omega)\subset \C^N$,
and let $\eta \colon W\to \Sigma$ be a holomorphic retraction 
of an open neighborhood $W\subset\mathbb{C}^N$ of $\Sigma$ onto $\Sigma$
(see \cite{lit21}). The map
\[
	G=\psi\circ F\colon\Omega \to G(\Omega)\subset\Sigma
\]
is a $\mathcal{C}^{r}$-diffeomorphism of $\Omega$ onto its image in $\Sigma$,
and it is holomorphic in $\Omega_0$. Therefore 
$\overline\partial G$ and its derivatives 
$D^{\alpha}(\overline{\partial} G)$ of order 
$|\alpha|\leq r-1$ vanish on $\Omega_0$. In particular
we have
\begin{equation}
\label{estimates-G}
   \bigl|\overline{\partial}G(y)\bigr|=o\bigl(d(y,\Omega_0)^{r-1}\bigr),\qquad 
   \bigl|D^1(\overline{\partial}G)(y)\bigr|=o\bigl(d(y,\Omega_0)^{r-2}\bigr)
\end{equation}
as $y \to \Omega_0$, where $d(y,\Omega_0)$ denotes the distance from 
$y$ to $\Omega_0$ in $\theta$ (considered as a subset of $U_{\overline{D}}\times \C^k$).

By shrinking the neighborhood $U_{\overline{D}} \supset \overline{D}$, 
we may assume that there is a strictly plurisubharmonic $\mathcal{C}^r$-function 
$\rho \colon U_{\overline{D}} \to \mathbb{R}$ such that 
$D=\{z\in U_{\overline{D}} \mid \rho(z)<0\}$ and $d \rho(z)\neq 0$ for every 
$z\in bD=\{\rho=0\}$. For constants $\epsilon \ge 0$ and $M>0$ 
we set $\widetilde{\rho}(z,t)=\rho(z)+M|t|^2$ and define 
\begin{equation}
\label{ue}
	U_{\epsilon}=\{(z,t)\in \theta \mid z\in U_{\overline{D}},\ 
	\widetilde{\rho}(z,t)<\epsilon\}
\end{equation}
(see Figure \ref{slika6}). 
Clearly $\overline{D}\subset U_{\epsilon}$
for every $\epsilon>0$. By choosing $M>0$ sufficiently large we insure that $U_{\epsilon}\Subset \Omega$ for every sufficiently small 
$\epsilon\ge 0$. For such $M$ and for $\epsilon\ge 0$ small enough, 
$U_{\epsilon}$ is a strongly pseudoconvex domain with  
$\mathcal{C}^r$-boundary.

\begin{figure}[ht]
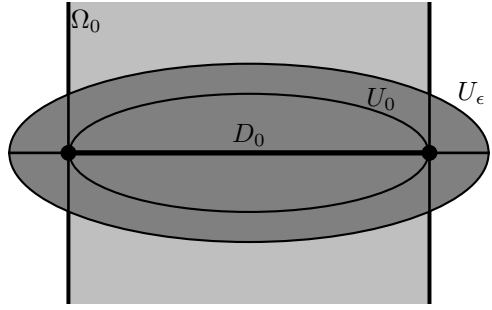

\psset{unit=0.8 cm}

\pspicture*(2,-0.5)(10,4.5)

\pspolygon[fillstyle=solid, fillcolor=lightgray, linewidth=1.5pt](3,-1)(9,-1)(9,5)(3,5)
\psellipse[fillstyle=solid,fillcolor=gray](6,2)(4,1.5)
\psline[linewidth=1pt](3,1)(3,4)
\psline[linewidth=1pt](9,1)(9,4)
\psellipse(6,2)(3,1)
\psline[linewidth=1pt,linearc=.15]{-}(1,2)(11,2)
\psline[linewidth=2pt,linearc=.15]{*-*}(3,2)(9,2)
\rput(3.3,4.2){$\Omega_0$}
\rput(9.7,3){$U_{\epsilon}$}
\rput(8.2,2.9){$U_0$}
\rput(6,2.3){$D_0$}

\endpspicture
\caption{The sets $U_{\epsilon}$}
\label{slika6}
\end{figure}

From the definition of $U_\epsilon$ it is clear that the distance from any point of 
$U_{\epsilon}$ to the set $\Omega_0=\pi^{-1}(\overline{D})\cap \Omega$ 
is of size $O(\epsilon)$. It then follows from (\ref{estimates-G}) that 
\begin{equation}\label{formula2}
	\|\overline{\partial} G \|_{L^{\infty}(U_{\epsilon})} = o(\epsilon^{r-1}) \textrm{  as  }\epsilon\to 0.
\end{equation}
There is a constant $C>0$ such that for every sufficiently small $\epsilon>0$ the equation $\overline{\partial}u=\overline{\partial}G$ has a solution $u_{\epsilon}$ on 
$U_{\epsilon}$, satisfying the uniform estimate 
\begin{eqnarray}
\label{formula1}
    \|u_{\epsilon}\|_{L^{\infty}(U_{\epsilon})} \leq C\, \|\dibar G \|_{L^{\infty}(U_{\epsilon})}
    =o(\epsilon^{r-1}).
\end{eqnarray}
The constant $C$ can be chosen independent of $\epsilon$, since
$U_\epsilon$ is ${\mathcal C}^2$-close to $U_0$
(see e.g.\ \cite[VIII/Theorem 8.5]{lit23} and also \cite{lit14,lit16,lit15}).
The map 
\[
	G_{\epsilon}= \eta \circ(G-u_{\epsilon})  \colon U_\epsilon \to \Sigma
\]
is hence holomorphic on $U_\epsilon$.

To complete the proof, we show that for every small $\epsilon >0$ the map 
$G_\epsilon \colon U_{\frac{\epsilon}{2}} \to \Sigma$ is actually biholomorphic 
onto its image that contains $\psi(A)$. The map
$F_\epsilon =\psi^{-1} \circ G_\epsilon$ is then biholomorphic
from $U_{\frac{\epsilon}{2}}$ onto a neighborhood of $A$ in $X$.

We use Lemma \ref{tl} locally on the set $U_{\frac{\epsilon}{2}}$ to obtain  
global estimates on the derivatives of $u_{\epsilon}$. 
In particular, we claim that 
\begin{equation}
\label{D1}
\|D^1u_{\epsilon}\|_{L^{\infty}(U_{\frac{\epsilon}{2}})}=o(1) \textrm{  as  } \epsilon \to 0.
\end{equation}
To see this, we fix a small $\epsilon_0 >0$ and choose finite open covers $\{W_j\}_{j=1} ^m$ and $\{V_j\}_{j=1} ^m$ of $\overline{U}_{\epsilon_0}$, satisfying $W_j\Subset V_j$ for every $j$. 
We may assume that $V_j\Subset U_j$ for every $j\in\{1,\ldots,m\}$, 
where $U_j$ is a local coordinate patch in $\theta$, and let $\varphi_j\colon U_j\to \varphi_j(U_j)\subset\mathbb{C}^{\dim \theta}$ be its corresponding biholomorphic map. 
Next, choose a constant $b>0$ such that
\begin{equation}
\label{trikotnik}
\bigl|\bigl(\widetilde{\rho}\circ\varphi_j ^{-1}\bigr)(x)-\bigl(\widetilde{\rho}\circ\varphi_j ^{-1}\bigr)(x')\bigr|<b |x-x'|  
\end{equation}
holds for all $x,x'\in V_j'=\varphi_j(V_j)$.
By the uniform continuity of $\varphi_j^{-1}$ on $\varphi_j ^{-1}(\overline{V}_j)$ 
there is also a constant $a>0$ such that 
$\varphi_j ^{-1}\bigl(B(w,a)\bigr)\subset V_j$ 
for every $w\in W_j'=\varphi_j (W_j)$. 
(Here $B(w,a)$ denotes a ball centered at $w$ and with radius $a$.) 
For any $\epsilon$ with $0<\epsilon< \min \{\epsilon_0,2ba\}$ and $w\in W_j '$ we set 
\[
	K_{w,j,\epsilon} = \varphi_j ^{-1}\left(\overline{B}\bigl(w,\frac{\epsilon}{2b}\bigr)\right) \subset V_j.
\]
By (\ref{trikotnik}) and the definition of $K_{w,j,\epsilon}$ we obtain
\[
  \bigl|\widetilde{\rho}(x)-\widetilde{\rho}\bigl(\varphi_j^{-1}(w)\bigr)\bigr|  =        
  \bigl|\bigl(\widetilde{\rho}\circ\varphi_j^{-1}\bigr) \bigl(\varphi_j(w)\bigr)-\bigl(\widetilde{\rho}\circ\varphi_j^{-1}\bigr)(w)\bigr|
  < b|\varphi_j(x)-w|<\frac{\epsilon}{2}
\]
for any pair of points $w\in\varphi_j(\overline{U}_{\frac{\epsilon}{2}}\cap V_j)$ and 
$x\in K_{w,j,\epsilon}$. 
Since $\varphi_j ^{-1}(w)\in U_{\frac{\epsilon}{2}}$, it follows from the above inequality that $\widetilde{\rho}(x)<\epsilon$ and hence $K_{w,j,\epsilon}\subset U_{\epsilon}$ for all $w\in\varphi_j(\overline{U}_{\frac{\epsilon}{2}}\cap V_j)$.
Using Lemma \ref{tl} for the function $u_{\epsilon}\circ \varphi_j^{-1}$ on $B(w,\frac{\epsilon}{2b})$, with $w\in \varphi_j(\overline{U}_{\frac{\epsilon}{2}}\cap V_j)$,
we get the following estimates for $|\alpha|\leq r-1$:  
\begin{eqnarray*}
  c\, \bigl|D^{\alpha}\bigl(u_{\epsilon}\circ\varphi_j^{-1}\bigr)\bigl(\varphi_j(w)\bigr)\bigr| &\le& 
  \frac{\epsilon}{2b}\bigl\| D^{\alpha}\bar{\partial} (u_{\epsilon}\circ\varphi_j^{-1})\bigr\|_{L^{\infty}(B(w,\frac{\epsilon}{2b}))} \\
  && +\bigl(\frac{\epsilon}{2b}\bigr)^{-|\alpha|} \,
   \bigl\|u_{\epsilon}\circ\varphi_j^{-1}\bigr\|_{L^{\infty}(B(w,\frac{\epsilon}{2b}))}  \\
\end{eqnarray*}
\[
\leq  \frac{\epsilon}{2b}\bigl\|D^{\alpha}\bar{\partial}
 		(u_{\epsilon}\circ\varphi_j^{-1})\bigr\|_{L^\infty(\varphi_j(V_j))} 	
 	+ \bigl(\frac{\epsilon}{2b}\bigr)^{-|\alpha|}||u_{\epsilon}||_{L^\infty(U_{\epsilon})}, 
\]
where we may assume that the constant $c>0$ is independent of the choice of $w\in \varphi_j(\overline{U}_{\frac{\epsilon}{2}}\cap V_j)$. As $||D^{\alpha}\bar{\partial}(u_{\epsilon}\circ\varphi_j^{-1})||_{L^{\infty}(\varphi_j(V_j))}$ 
is bounded from above by some positive constant, we further obtain  
\[
	\bigl\|D^{\alpha}(u_{\epsilon} \circ \varphi_j^{-1})\bigl\|_{L^{\infty}\bigl(\varphi_j(U_{\frac{\epsilon}{2}}\cap V_j)\bigr)} 
	\leq c'\epsilon +c''\epsilon^{-|\alpha|}||u_{\epsilon}||_{L^{\infty}(U_{\epsilon})},
\]
where $c'$ and $c''$ are some positive constants. 
Since the sets $\{V_j\}_{j=1}^m$ cover $U_{\frac{\epsilon}{2}}$, it then follows that
\begin{equation}\label{odvo2}
   ||D^{\alpha}u_{\epsilon}||_{L^{\infty}(U_{\frac{\epsilon}{2}})}\leq 
   \widetilde{c}\bigl( \epsilon  + \epsilon^{-|\alpha|}o(\epsilon^{r-1}) \bigr),\quad |\alpha|\leq r-1,
\end{equation}
for some positive constant $\widetilde{c}$. Clearly this implies (\ref{D1}).

The estimate (\ref{formula1}) shows that for $\epsilon>0$ small enough,
the image of the map $G-su_{\epsilon}\colon U_\epsilon\to \C^N$ 
$(s\in[0,1])$ is contained in some compact subset $W'\Subset W$ 
that can be chosen to be independent of $s$ and $\epsilon$.
Hence the family
\[
	G_{\epsilon, s}:= \eta \circ(G-su_{\epsilon})\colon U_{\epsilon}\to \Sigma,
   \qquad s\in [0,1]
\]
is a homotopy between $G|_{U_{\epsilon}}$ and the holomorphic map 
$G_{\epsilon}=G_{\epsilon,1}= \eta \circ(G-u_{\epsilon})$.

Since $G$ is a diffeomorphism, it follows from (\ref{formula1}) and (\ref{D1}) that $G_{\epsilon,s}$ is 
a diffeomorphism on $U_{\frac{\epsilon}{2}}$, 
provided that $\epsilon>0$ is small enough.
Furthermore, (\ref{formula1}) implies
\[
 \frac{\|G_{\epsilon,s}-G\|_{L^{\infty}(U_{\epsilon})}}{\epsilon}\to 0 
 \ \textrm{\ as\  } \epsilon\to 0  \ \textrm{\ uniformly\ in\ } s\in [0,1].
\]
It follows that
\begin{equation}
\label{boundary}
	G_{\epsilon,s}(bU_{\frac{\epsilon}{2}}) \in\Sigma\backslash G(\overline{U}_0)
\end{equation}
for every small $\epsilon>0$ and $s\in[0,1]$.

We now use a fact from topological degree theory (see the first two chapters in \cite{lit28}). 
It says that the degrees of two homotopic $\mathcal{C}^1$-mappings are equal, if they are 
calculated at a point that is not contained in the image of the boundary of the domain 
with respect to the homotopy. Since $G$ is a diffeomorphism, the degree of $G$ 
at a point $x\in G(\overline{U}_0)$ with respect to $U_{\frac{\epsilon}{2}}$ 
is equal to one. From (\ref{boundary}) it then follows that the degree of the homotopic map 
$G_{\epsilon}$ at a point $x\in G(\overline{U}_0)$ also equals one, and hence
$G_{\epsilon}(U_{\frac{\epsilon}{2}})$ contains the set $G(\overline{U}_{0})\supset A$.
Therefore the set 
\[
           \omega' = \psi^{-1}(G_{\epsilon}(U_{\frac{\epsilon}{2}}))\subset U \subset X
\] 
is an open Stein neighborhood of $A$ that is biholomorphic onto the domain 
$U_{\frac{\epsilon}{2}}\subset \theta$ via the biholomorphism 
$\Phi= F_{\epsilon}=\psi^{-1}\circ G_{\epsilon}$.

Finally, if $\epsilon >0$ is small enough then the preimage 
$A_\epsilon = F^{-1}_\epsilon (A) \subset \theta$ is 
sufficiently $\mathcal{C}^1$-close to $\overline D$ (the zero section
of $\theta|_{\overline D}$), so that $\pi$ projects it biholomorphically 
onto a compact domain $\overline{D}_\epsilon \subset U_{\overline{D}}$ with the interior $D'=D_{\epsilon}$. 
Since $A_\epsilon$ is biholomorphic to $A$, it has strongly pseudoconvex
$\mathcal{C}^r$-boundary, and hence the same is true for the domain $D_\epsilon$.
It follows by the implicit function theorem that
\begin{equation}
\label{A-eps}
	A_\epsilon = \phi_\epsilon (\overline{D}_{\epsilon}),
\end{equation}
where $\phi_\epsilon$ is a section of class $\mathcal{A}^r(D_\epsilon)$ 
of the restricted vector bundle $\theta|_{\overline{D}}$.

This completes the proof of Theorem \ref{izrek3}.
\end{proof}

\section{Approximation of mappings}
\label{approximation}

In this section we prove the following approximation theorem that
clearly includes Corollary \ref{corollary1}.

\begin{theorem}
\label{CR-approx}
Let $D$ be a relatively compact strongly pseudoconvex domain with $\mathcal{C}^l$-boundary 
$(l\geq 2)$ in a Stein manifold $S$, and let $f\colon \overline D \hookrightarrow X$
be an embedding of class $\mathcal{A}^l(D,X)$ onto $A=f(\overline{D})$. 
Then every $\mathcal{C}^r$-map $g\colon A\to Y$ $(r\in\{0,1,\ldots,l\})$ to a complex manifold $Y$, 
that is holomorphic in the interior $A\backslash bA$,  
is a $\mathcal{C}^r(A,Y)$-limit of a sequence of maps $g_j\colon U_j\to Y$ 
which are holomorphic in open neighborhods $U_j$ of $A$ in $X$. 
\end{theorem}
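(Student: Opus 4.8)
The plan is to reduce the assertion to a $\dibar$-approximation problem for $\C^N$-valued functions on a strongly pseudoconvex domain in $S$, by combining the tubular neighborhood of Theorem~\ref{izrek3} with the graph construction and the embedding--retraction method used in that proof. First I would transfer the problem to a standard model over $S$. Applying Theorem~\ref{izrek3} to the embedding $f$ gives a holomorphic vector bundle $\theta\to U_{\overline D}$, a Stein domain $\omega'\subset X$ with $A\subset\omega'$, and a biholomorphism $\Phi\colon\omega\to\omega'$ from a neighborhood $\omega\subset\theta$ of the zero section, such that $A_0:=\Phi^{-1}(A)$ is the graph of a section $\sigma\in\mathcal A^l(D')$ of $\theta|_{\overline{D'}}$ over the closure of a strongly pseudoconvex domain $D'\Subset S$. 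Writing $\pi\colon\theta\to S$ for the projection, $\pi$ maps $A_0$ biholomorphically onto $\overline{D'}$, so $\tilde g:=g\circ\Phi\circ\sigma\colon\overline{D'}\to Y$ is of class $\mathcal A^r(D',Y)$. If $h$ is holomorphic on a neighborhood of $\overline{D'}$ in $S$ and $\mathcal C^r$-close to $\tilde g$, then $g_h:=h\circ\pi\circ\Phi^{-1}$ is holomorphic on a neighborhood of $A$ in $X$, and since $\Phi$ and $\sigma$ are fixed (with $\Phi$ biholomorphic, hence smooth up to $\overline{D'}$), $g_h$ is $\mathcal C^r$-close to $g$ on $A$. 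Thus it suffices to approximate $\tilde g$ in $\mathcal C^r(\overline{D'},Y)$ by maps holomorphic in neighborhoods of $\overline{D'}$ in $S$.

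Next I would linearize the target. View $\tilde g$ as the section $z\mapsto(z,\tilde g(z))$ of the trivial holomorphic submersion $p_S\colon S\times Y\to S$, and let $\Gamma\subset S\times Y$ be its graph; $\Gamma$ is holomorphic over $D'$ and of class $\mathcal C^r$ up to the boundary. By \cite[Theorem~1.1]{lit25} applied to the submersion $p_S$, the set $\Gamma$ has an open Stein neighborhood $\Omega$ in $S\times Y$ (mere continuity up to the boundary suffices, which is what will make the continuous case of Corollary~\ref{corollary1} work); when $r\ge 2$ one may alternatively invoke Theorem~\ref{izrek2} for the embedding $z\mapsto(z,\tilde g(z))$. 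Embedding $\Omega$ properly as a closed complex submanifold $\Sigma=\psi(\Omega)\subset\C^N$ and choosing a holomorphic retraction $\eta\colon W\to\Sigma$ of a neighborhood $W\supset\Sigma$ (see \cite{lit21}), I reduce the nonlinear problem to a linear one exactly as in the proof of Theorem~\ref{izrek3}: it is enough to approximate the $\C^N$-valued map $G:=\psi\circ(\id,\tilde g)\in\mathcal A^r(D',\C^N)$, after which $\eta$ and $\psi^{-1}$ return the approximants into $\Omega$ and the projection $p_Y\colon S\times Y\to Y$ returns them into $Y$.

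Finally I would solve $\dibar$ with estimates. Extend $G$ to a $\mathcal C^r$-map on a neighborhood of $\overline{D'}$ in $S$ that is $\dibar$-flat to the highest order permitted by its regularity, so that $\bigl|\dibar G\bigr|=o\bigl(d(\cdot,\overline{D'})^{\,r-1}\bigr)$. Taking sublevel sets $D'_\epsilon=\{\rho<\epsilon\}$ of a strictly plurisubharmonic defining function $\rho$ of $D'$ (the analog of the domains $U_\epsilon$ of \eqref{ue}) yields strongly pseudoconvex domains $D'_\epsilon\searrow\overline{D'}$ on which the equation $\dibar u_\epsilon=\dibar G$ is solvable with the uniform bound $\|u_\epsilon\|_{L^\infty(D'_\epsilon)}\le C\,\|\dibar G\|_{L^\infty(D'_\epsilon)}=o(\epsilon^{r-1})$, the constant $C$ being independent of $\epsilon$ because the $D'_\epsilon$ are $\mathcal C^2$-close to a fixed strongly pseudoconvex domain. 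Then $G-u_\epsilon$ is holomorphic and close to $G$, so $\eta\circ(G-u_\epsilon)$ is holomorphic into $\Sigma$, and $h_\epsilon:=p_Y\circ\psi^{-1}\circ\eta\circ(G-u_\epsilon)$ is a holomorphic map from $D'_{\epsilon/2}$ into $Y$ converging to $\tilde g$. Pulling $h_\epsilon$ back through $\pi$ and $\Phi$ produces the required maps $g_j$, holomorphic near $A$ in $X$, with $g_j\to g$.

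The main obstacle is the last step, and specifically obtaining $\mathcal C^r$-convergence (rather than mere uniform convergence) \emph{up to} the boundary $bD'$. For this I would apply Lemma~\ref{tl} locally on $D'_{\epsilon/2}$, exactly as in the derivation of \eqref{odvo2}, to upgrade the $L^\infty$-bound on $u_\epsilon$ to derivative bounds $\|D^\alpha u_\epsilon\|_{L^\infty(D'_{\epsilon/2})}\to 0$ up to the order needed for $\mathcal C^r$-approximation; the interior $\dibar$-flatness of the extension of $G$, the fact that points of $D'_\epsilon$ lie at distance $O(\epsilon)$ from $\overline{D'}$, and the uniformity of the $\dibar$-estimate over the family $\{D'_\epsilon\}$ are precisely what make this bookkeeping work. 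Controlling the top-order derivative near $bD'$ is the delicate part, since the flatness available from a merely $\mathcal C^r$ map is limited, and it is here that one must argue most carefully (if necessary combining the boundary estimate with interior Cauchy estimates). A secondary point is the construction of the Stein neighborhood $\Omega$ in the low-regularity range, where Theorem~\ref{izrek2} does not apply to the rough graph and one genuinely relies on the submersion picture of \cite{lit25}.
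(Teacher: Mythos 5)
Your reduction is sound and is essentially the paper's own: transferring the problem through $\Phi$ and the projection $\pi$ to the approximation of $\tilde g\in\mathcal A^r(D',Y)$ on $\overline{D'}$ is exactly the paper's construction (its fiberwise constant extension $G$ on the translated tube $U$ is precisely your $\tilde g\circ\pi$). The genuine gap is in your final step. At that point the paper does not solve $\dibar$ by hand; it invokes \cite[Theorem 1.2]{lit11}, the spray-based approximation theorem for maps of class $\mathcal A^r(D,Y)$ on strongly pseudoconvex domains, which covers every $r\in\{0,1,\ldots,l\}$, gives true $\mathcal C^r$-convergence, and needs no Stein neighborhood of the graph. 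Your substitute --- extend $G$ to be $\dibar$-flat, solve $\dibar u_\epsilon=\dibar G$ on $D'_\epsilon$ with sup-norm bounds, and bootstrap with Lemma~\ref{tl} --- does not deliver the statement. First, it is empty for $r=0$: a merely continuous $G$ has no $\dibar G$ and admits no flat extension, so the scheme cannot start; and for $r=1$ it yields only uniform, not $\mathcal C^1$, convergence. This is not the ``secondary point'' about Stein neighborhoods that you flag (that part of your argument, via \cite[Theorem 1.1]{lit25}, is fine); it is a failure of the analytic core. Second, for $r\ge2$ the bookkeeping of \eqref{odvo2} gives $\|D^{\alpha}u_{\epsilon}\|_{L^{\infty}(D'_{\epsilon/2})}\le \widetilde{c}\,\bigl(\epsilon+\epsilon^{-|\alpha|}o(\epsilon^{r-1})\bigr)$, which tends to $0$ only for $|\alpha|\le r-1$; at top order $|\alpha|=r$ the bound is $o(\epsilon^{-1})$ and blows up. So you obtain $\mathcal C^{r-1}$-approximation, one derivative short of the claim, and interior Cauchy estimates cannot recover a derivative \emph{at} $bD'$. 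This is exactly why the paper's proof of Theorem~\ref{izrek3}, which uses the identical scheme, only ever needed the bound $D^{1}u_{\epsilon}=o(1)$ under the hypothesis $r\ge2$; your problem requires control at order $r$, which the method does not give.

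The gap is repairable without changing your architecture. After the embedding--retraction step, approximate the $\C^N$-valued map $G\in\mathcal A^r(D',\C^N)$ componentwise by citing the classical theorems with genuine $\mathcal C^r$-estimates: integral-kernel approximation of Henkin, Lieb and Kerzman for $r=0$, and the $\dibar$-solution operators with $\mathcal C^k$-estimates of Alt, Siu and Lieb--Range \cite{Alt,siu2,lit12} for $r\ge1$, instead of the naive sup-norm argument. Alternatively, and more economically, quote \cite[Theorem 1.2]{lit11} directly for $\tilde g$, as the paper does; this also renders the graph, its Stein neighborhood, and the retraction apparatus unnecessary.
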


In general the domains $U_j$ of the approximating maps shrink down to $A$.
If $Y$ enjoys the {\em convex approximation property} (see \cite{FF:CAP})
then the Runge approximation theorem holds for maps of Stein manifolds 
to $Y$, and in this case we can choose the approximating maps $g_j$ 
on a fixed neighborhood of $A$ in $X$. 
This holds for example if $Y$ is a complex homogeneous manifold.

We begin by a brief review of the approximation results for maps in 
$\mathcal{A}^r(D,Y)$, where $D$ is a relatively compact strongly 
pseudoconvex domain in a Stein manifold. 
When $D\Subset\mathbb{C}^n$ and $Y=\mathbb{C}$, 
the uniform approximation of functions in 
$\mathcal{A}^0(D,\mathbb{C})$ by holomorphic function in some neighborhood of 
$\overline{D}$ is obtained by using integral kernel representation 
(see \cite{lit17}, \cite[Theorem 2.9.2]{lit6}, \cite{lit18}); 
for other proofs see also \cite{lit14, lit13}). 
The approximation results for functions in $\mathcal{A}^r(D)$ with 
$0\leq r\leq l$ and $2\leq l$ are obtained by using solutions of 
the $\bar\partial$-equation with $\mathcal{C}^r$-estimates 
(see \cite{Alt,siu2} for the case of $(0,1)$-forms and \cite{lit12} 
for the case of $(0,q)$-forms; 
see also \cite[VIII/Theorem 3.43]{lit23}, \cite{lit16}, \cite{lit15}). 
For approximation on weakly pseudoconvex domain see \cite{lit26,lit27}.

For compact sets that allow sufficiently regular Stein neighborhood bases
(e.g., the so-called {\em uniformly $H$-convex} sets)
one obtains holomorphic approximation theorems 
by using H\"ormander's $L^2$-method for solving the 
$\overline\partial$-equation, together with the interior
elliptic estimates, similar to those in Lemma \ref{tl} above.
For this approach see Chirka \cite{Cir69}.

These approximation theorems for functions easily imply 
the corresponding approximation theorems for maps 
$f\colon \overline D \to Y$ to more general complex manifolds $Y$, 
provided that the image $f(\overline D)$ admits a Stein neighborhood in $Y$.
(Just embed this Stein neighborhood into a Euclidean space and use a holomorphic
retraction onto the embedded submanifold.) Without a Stein neighborhood
the problem is more involved. In \cite{lit11} the authors proved 
by the method of holomorphic sprays that every map in $\mathcal{A}^r(D,Y)$ 
for $r \in\mathbb{Z}_+$ can be approximated in the $\mathcal{C}^r(\overline{D},Y)$-topology 
by maps that are holomorphic in open neighborhoods of $\overline D$ in 
the ambient Stein manifold $S$. (For a simpler proof when 
$r\geq 2$ see \cite[Theorem 2.6]{lit2}.)

\begin{proof}[Proof of Theorem \ref{CR-approx}]
By Theorem \ref{izrek3} it suffices to consider the following situation:
$D\Subset S$ is a strongly pseudoconvex domain with $\mathcal{C}^l$-boundary
in a Stein manifold $S$, $\pi\colon X\to S$ is a holomorphic vector bundle,
$\phi\colon \overline D\to X$ is a section of class $\mathcal{A}^r(D,X)$,
and $A=\phi(\overline D)$.

Let $U_0 \subset X$ be a strongly pseudoconvex domain of the 
form (\ref{ue}) (for $\epsilon=0$). Note that $U_0$ contains the 
zero section of $X|_D$, it is contained in $\pi^{-1}(D)$, 
and its boundary $bU_0$ is of class $\mathcal{C}^l$.
Let us write the section $\phi$ in the form $\phi(z)=\bigl(z,\varphi(z)\bigr)$,
where $\varphi(z)\in X_z=\pi^{-1}(z)$ denotes the fiber component. Set
\[
		U = \{ \bigl(z,t+\varphi(z)\bigr)\in X \mid (z,t)\in U_0\}.
\]
The set $U$ is obtained by translating $U_0$ in the fiber direction by
the section $\phi$. It is immediate that $U$ is strongly pseudoconvex
with $\mathcal{C}^r$-boundary and $\overline U \subset \pi^{-1}(\overline D)$, since we can choose $\phi$ sufficiently $\mathcal{C}^1$-close to the zero section.

Let $g\colon A\to Y$ be an $\mathcal{A}^r(A,Y)$-map to a complex manifold $Y$.
We extend $g$ to the map $G\colon \pi^{-1}(\overline D) \to Y$ by letting 
$G$ to be constant on each fiber $X_z$, $z\in\overline D$.
Then the restriction $G \colon \overline U \to Y$ is a map of class 
$\mathcal{A}^r(U,Y)$. By \cite[Theorem 1.2]{lit11} 
this map can be approximated in the $\mathcal{C}^r$-topology
by holomorphic maps from open  neighborhoods of $\overline U\supset A$ (in $X$)
to $Y$. This completes the proof.
\end{proof}

\smallskip
\textit{Acknowledgement.}
I wish to thank professor F. Forstneri\v{c} for  stimulating discussions and helpful remarks considering this paper, and esspecially for introducing me to the problem of the existence of Stein neighborhoods of embedded strongly pseudoconvex domains.

\bibliographystyle{amsplain}

\end{document}